\documentclass[a4paper,12pt,reqno]{amsart}
\usepackage{vmargin}
\usepackage[pdftex]{graphicx}
\usepackage{amsfonts}
\usepackage{amssymb}
\usepackage[sorted]{amsrefs} 
\usepackage{mathrsfs}
\usepackage{stmaryrd}
\usepackage{amsmath}
\usepackage{amsthm}
\usepackage[shortlabels]{enumitem}
\usepackage{nomencl}
\usepackage[bookmarks=true]{hyperref}   
\usepackage{esint}
\usepackage{dsfont}
\usepackage{upgreek}
\usepackage{xcolor}
\usepackage{slashed}
\usepackage{scalerel}
\usepackage{comment}
\usepackage{todonotes}
\usepackage[utf8]{inputenc}

\usepackage{color} 
      
        \definecolor{pink}{rgb}{1,0,1}

        \newcommand{\Vol}{\operatorname{Vol}}

        \definecolor{purple}{rgb}{0.4,0.2,1}

\hypersetup{
    colorlinks,%
}

\author{Lashi Bandara}
\author{Medet Nursultanov}
\author{Julie Rowlett} 
\title[Weyl's law for weighted Laplace equation on RRM with boundary]{Eigenvalue asymptotics for weighted Laplace equations on rough Riemannian manifolds with boundary}
\date{\today}
\address{Lashi Bandara, 
Institut f\"ur Mathematik,
Universit\"at Potsdam, 
D-14476, Potsdam OT Golm, Germany
}
\urladdr{\href{http://www.math.uni-potsdam.de/~bandara}{http://www.math.uni-potsdam.de/~bandara}}
\email{\href{mailto:lashi.bandara@uni-potsdam.de}{lashi.bandara@uni-potsdam.de}}

\address{Medet Nursultanov, 
Mathematical Sciences,
Chalmers University of Technology and University of Gothenburg, 
SE-412 96, Gothenburg, Sweden}
\email{\href{mailto:medet@chalmers.se}{medet@chalmers.se}}

\address{Julie Rowlett,  Mathematical Sciences, Chalmers University of Technology and University of Gothenburg,  SE-412 96, Gothenburg, Sweden} 
\urladdr{\href{http://www.math.chalmers.se/~rowlett}{http://www.math.chalmers.se/~rowlett}}
\email{\href{mailto:julie.rowlett@chalmers.se}{julie.rowlett@chalmers.se}}


\dedicatory{This work is dedicated to the memory of Alan McIntosh.}

\keywords{Weyl asymptotics, rough metrics}  
\subjclass[2010]{Primary 58J50, Secondary 58B20}


\def\colour{\colour}

\newcommand{\dbrac}[1]{\left\{#1\right\}}

\newcommand{\modulus}[1]{|#1|}

\newcommand{\norm}[1]{\| #1 \|}			
\newcommand{\set}[1]{\dbrac{#1}}

\newcommand{\intersect}{\cap}

\newcommand{\embed}{\hookrightarrow}		

\newcommand{\cW}{\mathcal{W}}
\newcommand{\cV}{\mathcal{V}}
\newcommand{\graph}{\mathrm{graph}}

\DeclareMathOperator{\spt}{spt}
\newcommand{\rest}[1]{{{\lvert_{}}_{}}_{#1}}


\newcommand{\pa}{\partial}

\mathchardef\semic="303B

\newcommand{\R}{{\mathbf R}}
\newcommand{\C}{{\mathbf C}}

\newcommand{\mH}{{\mathcal H}}

\newcommand{\dom}{\textsf{D}}

\newcommand{\barint}{\mbox{$ave \int$}}
\newcommand{\divv}{{\text{{\rm div}}}}

\newcommand{\cL}{\mathcal L}

\def\barint_#1{\mathchoice
            {\mathop{\vrule width 6pt
height 3 pt depth -2.5pt
                    \kern -8.8pt
\intop}\nolimits_{#1}}%
            {\mathop{\vrule width 5pt height
3 pt depth -2.6pt
                    \kern -6.5pt
\intop}\nolimits_{#1}}%
            {\mathop{\vrule width 5pt height
3 pt depth -2.6pt
                    \kern -6pt
\intop}\nolimits_{#1}}%
            {\mathop{\vrule width 5pt height
3 pt depth -2.6pt
          \kern -6pt \intop}\nolimits_{#1}}}

\definecolor{gr}{rgb}   {0.,   0.8,   0. }
\definecolor{bl}{rgb}   {0.,   0.5,   1. }
\definecolor{mg}{rgb}   {0.7,  0.,    0.7}

\newcommand{\Bk}{\color{black}}



\renewcommand{\emptyset}{\varnothing}


 \newtheorem{theorem}{Theorem}[section]
 \newtheorem{lemma}[theorem]{Lemma}
 \newtheorem{corollary}[theorem]{Corollary}

 \theoremstyle{definition}
 \newtheorem{definition}[theorem]{Definition}
 \newtheorem{example}[theorem]{Example}
 \newtheorem{remark}[theorem]{Remark}

\newtheorem{thm}{Theorem}[section]

\newtheorem{cor}[thm]{Corollary}
\newtheorem{prop}[thm]{Proposition}

\theoremstyle{definition}

\newtheorem{rem}[thm]{Remark}



\setcounter{tocdepth}{1}

\begin{document}

\vspace*{-2em}
\maketitle

\begin{abstract}
Our topological setting is a smooth compact manifold of dimension two or higher with smooth boundary.  Although this underlying topological structure is smooth, the Riemannian metric tensor is only assumed to be bounded and measurable.  This is known as a \em rough Riemannian manifold.  \em  For a large class of boundary conditions we demonstrate a Weyl law for the asymptotics of the eigenvalues of the Laplacian associated to a rough metric.  Moreover, we obtain eigenvalue asymptotics for weighted Laplace equations associated to a rough metric.  Of particular novelty is that the weight function is not assumed to be of fixed sign, and thus the eigenvalues may be both positive and negative.  Key ingredients in the proofs were demonstrated by Birman and  Solomjak nearly fifty years ago in their seminal work on eigenvalue asymptotics.  In addition to determining the eigenvalue asymptotics in the rough Riemannian manifold setting for weighted Laplace equations, we also wish to promote their achievements which may have further applications to modern problems.
\end{abstract}

\tableofcontents
\vspace*{-2em}

\parindent0cm
\setlength{\parskip}{\baselineskip}


\section{Introduction}  
Let $M$ be a smooth, $n$-dimensional topological manifold with smooth boundary, $\pa M$, such that the closure, $\overline{M} = M \cup \pa M$, is compact.  If $M$ is equipped with a smooth Riemannian metric, $g$, then there is a naturally associated Laplace operator, which in local coordinates is 
\begin{equation} \label{Deltag} \Delta_g = - \frac{1}{\sqrt{ \det(g)}} \sum_{i,j=1}^n \frac{\pa}{\pa x_i} \left( g^{ij} \sqrt{\det(g)} \frac{\pa}{\pa x_j}\ \cdot\ \right). \end{equation}
This is a second order elliptic operator with smooth coefficients, inherited from the smoothness of the Riemannian metric.  It is well known in this setting that the Laplacian, $\Delta_g$,  has a discrete, non-negative set of eigenvalues which accumulate only at $\infty$.  The set of all eigenvalues is known as the spectrum.  Connections between the spectrum and the geometry of the underlying manifold have captivated mathematicians and physicists alike for many years; see for example \cite{Weyl}, \cite{kac}, \cite{ms}, \cite{gww},\cite{hawking}, \cite{polyakov},  \cite{sos}, and \cite{corners}.  Whereas it is impossible in general to analytically compute the individual eigenvalues, in order to discover relationships between the eigenvalues and the geometry, one may study quantities determined by the spectrum.  Any such quantity is known as a spectral invariant.  The most fundamental spectral invariants are determined by the rate at which the eigenvalues tend to infinity and were discovered by Hermann Weyl in 1911 \cite{Weyl}.  

Weyl proved that in the special case in which $M = \Omega$ is a smoothly bounded domain in $\R^n$, and the Dirichlet boundary condition is taken for the Euclidean Laplacian, then 
\begin{equation} \label{weyllaw0} \lim_{\Lambda \to \infty} \frac{N(\Lambda) }{\Lambda^{n/2}} = \frac{\omega_n \Vol(\Omega)}{(2\pi)^n}. \end{equation}  
Above, $N(\Lambda)$ is the number of eigenvalues of the Laplacian, counted with multiplicity, which do not exceed $\Lambda$, $\omega_n$ is the volume of the unit ball in $\R^n$, and $\Vol(\Omega)$ is the volume of the domain, $\Omega$.  Hence, the rate at which the eigenvalues tend to infinity determines both the dimension, $n$, as well as the volume of $\Omega$.  These quantities are therefore spectral invariants.  The asymptotic formula \eqref{weyllaw0} is known as \em Weyl's Law.  \em  Weyl's law has both geometric generalisations, in which the underlying domain or manifold is no longer smooth; as well as analytic generalisations, in which the Laplace operator is replaced by a different, but typically Laplace-like operator.

Here we simultaneously consider both a geometric generalisation as well as an analytic generalisation.  We consider compact manifolds with a smooth differentiable structure and allow the possibility that such manifolds also carry a smooth boundary. However, the Riemannian-like metric in our setting, known as a \em rough metric, \em  is only assumed to be measurable, which is the primary novelty and a great source of difficulty in the analysis.  Such a \emph{rough metric} is only required to be bounded above in an $L^{\infty}$ sense, and essentially bounded below.  A smooth topological manifold, $M$, equipped with a rough Riemannian metric, $g$, is henceforth dubbed a \emph{rough Riemannian manifold,} with abbreviation RRM.  

Given that the coefficients of the metric tensor are merely measurable for a rough Riemannian manifold, the length functional over a curve is not well defined. Therefore, unlike for smooth or even continuous metrics, it is not possible to obtain a length structure via minimisation over curves. An alternative may be to consider supremums over the difference of certain classes of functions evaluated at two points in an attempt to obtain a distance between these points.  In the smooth context, locally Lipschitz functions with gradient almost-everywhere bounded above recovers the usual distance metric. In our context, it is not clear that this yields a reasonable notion of distance.  Rough metrics may have a dense set of singularities.  Moreover, given that we allow for boundary further complicates matters.  Although there is no canonical distance metric, we show that a rough metric does induce a canonical Radon measure which allows for an $L^p$ theory of tensors.  It is this fact, along with the fact that the exterior derivative is purely determined by the differential topology, that we will employ to see the rough metric as a measure space with a Dirichlet form that we will use to define a Laplacian.
 
The explicit study of these rough metrics arose with connections to the Kato square root problem (c.f. \cite{AHLMcT, AKMc, BMc}), where these metrics can be seen as geometric invariances of this problem (c.f. \cite{BRough}).  These objects have appeared implicitly in the past, particularly in the setting of real-variable harmonic analysis where the $L^\infty$ topology is a natural one.  They are also a useful device when singular information can be transferred purely into the Riemannian metric. This happens when the singular object is actually a differentiable manifold, and the singular information can be purely seen as a lack of regularity of the Riemannian metric.  A typical situation for this is when the manifold is obtained as a limit of a Riemannian manifolds in the Gromov-Hausdorff sense.

In the manifold context, rough metrics were treated in \cite{SC} by Saloff-Coste in connection with Harnack estimates.  \Bk Norris studied Lipschitz manifolds \cite{Norris}, where rough metrics  are the natural replacement of smooth Riemannian metrics due to the regularity of the differentiable structure.  Higher regularity versions of rough metrics, namely $C^0$ metrics, were used by Simon in \cite{Simon} to study the Ricci flow with initial data given by a $C^0$ metric.  Burtscher \cite{Burtscher} also used these higher regularity versions of rough metrics to study length structures, since for these metrics length structures exist as they do in the smooth context.

One of the main reasons to study general rough metrics with only bounded, measurable coefficients is that a pullback of a smooth metric by a lipeomorphism is only guaranteed to have such regularity.  Such a  transformation allows for objects with singularities to be studied more simply.  For example, a Euclidean box can be written as the Lipschitz graph over a sphere, and hence, a Euclidean box can be analysed as a rough metric on the sphere.  In \cite{BLM}, rough metrics played a central role in the analysis of the regularity properties  of a geometric flow that is related to the Ricci flow in the context of optimal transport.

Although rough metrics arise in a variety of contexts and have been studied by several authors, Weyl's law has remained unknown in this context.  Indeed, due to the highly singular nature and lack of a distance metric, one could expect results in the spirit of those for sub-Riemannian manifolds.  Although one may define a sub-Riemannian Laplacian which has discrete spectrum, spectral asymptotics are still a largely open question \cite{arous}, \cite{leandre}.  Under certain assumptions one does, however, have a Weyl asymptotic \cite{cdvht}.  Yet, in the same work, it is shown that in general there may be only a local Weyl asymptotic which varies from point to point; there is no single asymptotic rate at which the eigenvalues tend to infinity, thus no Weyl law for the eigenvalues of the Laplacian.  Given the lack of smoothness and the lack of a distance-metric structure in the rough Riemannian manifold context, it is not immediately clear whether or not one would expect a Weyl asymptotic for the eigenvalues of the Laplacian.  
 However, a very crude indication that this may be the case can be seen from the fact that the Kato square root problem can be solved for a rough metric on any closed manifold (c.f. \cite{BCont}). 


The Laplace operator for a rough metric in our analysis is applicable to a wide class of boundary conditions, which we call \emph{admissible boundary conditions.}  The precise definition of admissible boundary condition and examples are given in \S \ref{Sec:AnalPrel}.  In essence, we begin with a closed subspace $\cW$ of the Sobolev space $H^1(M)$ containing $H^1_0(M)$. Then, we  define a Dirichlet form on the subspace $\cW$, which in turn gives rise to an associated Laplace operator,  denoted $\Delta_{g, \cW}$.  We not only demonstrate Weyl's law for such a Laplace operator, but we also demonstrate a Weyl law for a weighted Laplace equation.  

To state our main result in full generality, let $M$ be a smooth compact manifold of dimension $n\geq 2$ with smooth boundary, and let $g$ be a rough metric on $M$ (see Definition \ref{def:roughmetric}).  Let $\beta>\frac{n}{2}$, and $\rho\in L^{\beta}(M, \ d\mu_g)$ be a real-valued function such that
\begin{equation*}
\int_{M}\rho \ d\mu_g\neq 0.
\end{equation*}
For an admissible boundary condition, $\cW$, we consider the form
\begin{equation*}
\mathcal{E}_{g,\cW}[u,v]=\left(\nabla u,\nabla v\right)_{L^2(M,\ d\mu_g)} 
\end{equation*}
defined on the subspace, 
\begin{equation*}
Z(\rho)=
\begin{cases}
\cW & \text{if }\mathcal{E}_{g,\cW} \text{ generates the norm in } \cW,\\
    & \text{which is equivalent to the standard } H^1 \text{norm },  \\
    \\
\left\{u\in\cW: \; \int_{M}\rho u\ d\mu_g =0\right\} & \text{otherwise}.
\end{cases}
\end{equation*} 

We consider the eigenvalue problem 
\begin{equation} \label{evprob} 
\int_M \rho u \overline{v} \ d\mu_g =\lambda\mathcal{E}_{g,\cW}[u,v],
\quad
u,v\in Z(\rho),
\end{equation}

Let us see that this problem is natural. If we assume for a moment that $\rho=1$, and $\cW=H^1(M)$, then $u\in Z(\rho)$ if and only if $u$ is orthogonal to the constant functions in the $L^2$ sense. On the other hand, a constant function is an eigenfunction of Laplace operator with the Neumann boundary condition corresponding to the eigenvalue zero.  Since the eigenfunctions of a self-adjoint operator are orthogonal, it follows that the eigenvalues of the Laplace operator on $Z(\rho)$ are the non-zero eigenvalues of Laplace operator with Neumann boundary condition.  In case $\cW=H_0^1(M)$, $Z(\rho)=H_0^1(M)$.  Moreover, the non-zero eigenvalues, $\lambda$, of \eqref{evprob} are in bijection with the eigenvalues $\Lambda$ of the weighted Laplace equation 
\begin{equation} \label{evprob2} \Delta_{g, \cW} u = \Lambda \rho u \end{equation} 
In this way, we refer to the eigenvalues of \eqref{evprob} as eigenvalues of a weighted Laplace equation. 

This type of equation arises in the study of hydrodynamics and elasticity, specifically in the linearisation of certain nonlinear problems; see  \cite{allegretto} and references therein.  Further motivation comes from quantum mechanics and the study of the behaviour of eigenvalues for Schr\"odinger operators with a large parameter; see \cite{gs}.  The equation
 $$(\Delta -qV(x))u=\lambda u, \quad q\to \infty$$
reduces to the study of the spectral problem 
$$\Delta u= \lambda V(x) u.$$
Above, $V$ is the electric potential, for which there are no reasons to assume that the sign is constant.  Consequently, it is quite interesting to study \eqref{evprob2} in the generality considered here, where we only assume the weight function $\rho \in L^\beta$ for some $\beta > \frac n 2$, but we do not assume that $\rho$ is of constant sign.  Investigation of this type of problem includes, but is not limited to  \cite{bcf}, \cite{bt}, \cite{hk}.  This equation is not only interesting and relevant to physics but also has applications in biology such as modelling population genetics; see  \cite{fleming}.

The main result of this paper is 
\begin{theorem}[\textbf{Weyl asymptotics for weighted Laplace equation with admissible boundary conditions}]
Let $M$ be a smooth compact manifold of dimension $\geq 2$ with smooth boundary, and let $g$ be a rough metric on $M$.  Then, the eigenvalues of \eqref{evprob} are discrete with finite dimensional eigenspaces with positive and negative eigenvalues, $\{ - \lambda_j ^- (\cW); \lambda_j ^+ (\cW) \}_{j=1} ^\infty$, such that 
$$- \lambda_1 ^- (\cW) \leq - \lambda_2 ^- (\cW) \leq \ldots < 0 < \ldots \leq \lambda_2 ^+ (\cW) \leq \lambda_1 ^+ (\cW).$$
Moreover, they satisfy the Weyl asymptotic formula
\begin{equation*}
\lim_{k\rightarrow\infty}\lambda_{k}^{\pm}(\cW)k^{\frac{2}{n}}=\left(\frac{\omega_n}{(2\pi)^{n}}\right)^{\frac{2}{n}}\left(\int_{M^{\pm}}|\rho(x)|^{\frac{n}{2}}\ d\mu_g\right)^{\frac{2}{n}}=\left(\frac{\omega_n}{(2\pi)^{n}}\right)^{\frac{2}{n}}\|\rho\|_{L^{\frac{n}{2}}(M^+, \ d\mu_g)}.
\end{equation*}
Above, $M^\pm := \{ x \in M : \pm \rho(x) > 0 \}$.  
\end{theorem}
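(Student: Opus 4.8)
The plan is to recast \eqref{evprob} as the spectral problem for a compact self-adjoint operator, reduce it in local coordinates to a divergence-form eigenvalue problem $-\operatorname{div}(A\nabla u)=\Lambda w u$ with bounded measurable coefficients and an $L^\beta$ weight, and then establish the Weyl asymptotics for its counting function by localisation together with a Birman--Solomjak-type local law. First, I would check that \eqref{evprob} is of compact type. On $\cW$ the form $\mathcal{E}_{g,\cW}$ is nonnegative with a finite-dimensional kernel, namely the locally constant functions it contains, and passing to $Z(\rho)$ removes the nonzero constants, which fail the constraint $\int_M\rho u\,d\mu_g=0$ precisely because $\int_M\rho\,d\mu_g\neq0$. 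Since $\rho\in L^\beta$ with $\beta>\tfrac n2$, Hölder's inequality together with the Sobolev embedding $H^1(M)\hookrightarrow L^{2\beta'}(M)$ — strictly below the critical exponent when $n\ge3$, any finite exponent when $n=2$ — and Rellich's theorem show that multiplication by $|\rho|^{1/2}$ is a compact map $\cW\to L^2(M,d\mu_g)$; hence $(u,v)\mapsto\int_M\rho u\overline v\,d\mu_g$ is represented by a compact self-adjoint operator $T$ on $Z(\rho)$ with the inner product $\mathcal{E}_{g,\cW}$ (modulo kernel), whose nonzero eigenvalues are exactly the $\lambda$ of \eqref{evprob}, and whose branch-wise inverses are the eigenvalues of \eqref{evprob2}. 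This gives discreteness, finite multiplicities, accumulation only at $0$, and — whenever $M^\pm$ has positive $\mu_g$-measure — infinitely many eigenvalues of the corresponding sign. Writing $N^\pm(\Lambda)=\#\{k:\ \lambda_k^\pm(\cW)\ge\Lambda^{-1}\}$ for the number of eigenvalues of \eqref{evprob2} of the given sign with modulus at most $\Lambda$, the theorem is equivalent to
\[
N^\pm(\Lambda)=\frac{\omega_n}{(2\pi)^n}\,\Lambda^{n/2}\int_{M^\pm}|\rho|^{n/2}\,d\mu_g+o\!\left(\Lambda^{n/2}\right),
\]
since inverting this asymptotic yields $\lambda_k^\pm(\cW)\,k^{2/n}\to\big(\tfrac{\omega_n}{(2\pi)^n}\big)^{2/n}\big(\int_{M^\pm}|\rho|^{n/2}d\mu_g\big)^{2/n}$.

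Next I would localise. In any coordinate chart one has $\mathcal{E}_{g,\cW}[u]=\int\langle A\nabla u,\nabla u\rangle\,dx$ and $d\mu_g=\sqrt{\det g}\,dx$ with $A=(g^{ij}\sqrt{\det g})$ a symmetric, uniformly elliptic, bounded measurable matrix field, so that \eqref{evprob2} becomes $-\operatorname{div}(A\nabla u)=\Lambda w u$ with weight $w=\rho\sqrt{\det g}\in L^\beta$. Cover $\overline M$ by finitely many charts, tile the interior by small coordinate cubes and retain a thin collar of $\partial M$; because $H^1_0(M)\subseteq\cW\subseteq H^1(M)$, the min-max principle sandwiches $N^\pm(\Lambda)$ between the Dirichlet-decoupled and Neumann-decoupled counting functions of the cubes and the collar, and the collar contributes $o(\Lambda^{n/2})$ uniformly as its width tends to $0$ (controlled by $\|w\mathbf{1}_{\mathrm{collar}}\|_{L^{n/2}}^{n/2}\Lambda^{n/2}$). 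This disposes of the boundary condition $\cW$ at once and reduces everything to a local Weyl law on a single cube with Dirichlet, respectively Neumann, conditions.

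It then remains to prove that for $-\operatorname{div}(A\nabla u)=\Lambda w u$ on a cube $Q$ with Dirichlet or Neumann conditions, $A$ bounded measurable uniformly elliptic and $w\in L^\beta(Q)$, $\beta>\tfrac n2$,
\[
N^\pm(\Lambda;Q)=\frac{\omega_n}{(2\pi)^n}\,\Lambda^{n/2}\int_Q\frac{(w_\pm)^{n/2}}{\sqrt{\det A}}\,dx+o\!\left(\Lambda^{n/2}\right),\qquad w_\pm=\max(\pm w,0).
\]
This is exactly the circle of ideas of Birman and Solomjak: one reduces the sign-indefinite pencil to the one-sided problems with weights $w_\pm$ by a variational comparison; one approximates $w_\pm$ in $L^\beta(Q)$ — hence, $Q$ being bounded, in $L^{n/2}(Q)$ — by functions constant on a fine subdivision of $Q$, controlling the change in the counting function by the Birman--Solomjak bound $N(\Lambda;Q,v)\le C\|v\|_{L^{n/2}(Q)}^{n/2}\Lambda^{n/2}$, so that the remainder is $o(\Lambda^{n/2})$; one handles the merely bounded field $A$, which admits no $L^\infty$ approximation, by a further subdivision and the monotonicity of $N$ under two-sided pinching $A_-\le A\le A_+$ by constant matrices adapted to a Lebesgue-point argument; and on a cube with constant $A$ and constant weight the eigenvalues are explicit and a lattice-point count gives $N(\Lambda;Q)\sim\frac{\omega_n}{(2\pi)^n}\Lambda^{n/2}|Q|\,(\det A)^{-1/2}w^{n/2}$. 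Summing over cubes and charts, and using $\det A=(\det g)^{(n-2)/2}$, so that $(w_\pm)^{n/2}(\det A)^{-1/2}=|\rho|^{n/2}\sqrt{\det g}\,\mathbf{1}_{M^\pm}$, turns the leading term into $\int_{M^\pm}|\rho|^{n/2}d\mu_g$. Finally, the restriction from $\cW$ to $Z(\rho)$ (at most one linear functional) and the removal of the finite-dimensional kernel of $\mathcal{E}_{g,\cW}$ change every counting function by $O(1)$ and hence do not affect the asymptotics; in the special case $\rho\equiv1$, $\cW=H^1(M)$ this recovers Weyl's law for the nonzero Neumann eigenvalues, as anticipated in the introduction.

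I expect the main obstacle to be the local Weyl law of the previous step, and within it the treatment of the coefficient matrix $A$: unlike the weight, $A$ cannot be approximated in its natural $L^\infty$ norm, so one must combine the sharp $L^{n/2}$ (or $L^\beta$) bounds for the counting function of the weight with a careful two-sided pinching of $A$ on a subdivision fine enough to exploit Lebesgue points of $A$, and then verify that all accumulated errors are genuinely $o(\Lambda^{n/2})$ as the subdivision refines, uniformly in $\Lambda$. The boundary-collar bookkeeping for a merely measurable metric and the sign-indefinite reduction of the pencil are comparatively routine, but still need care.
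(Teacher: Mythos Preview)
Your proposal is correct and follows the same overall architecture as the paper: reduce \eqref{evprob} to a compact self-adjoint operator via Sobolev embedding and Rellich, localise into coordinate patches, apply Dirichlet--Neumann bracketing (using $H^1_0\subset\cW\subset H^1$), and invoke a Euclidean Weyl law for $-\operatorname{div}(A\nabla u)=\Lambda wu$ with $A\in L^\infty$ elliptic and $w\in L^\beta$. Two technical choices differ. First, the paper does not re-derive the Euclidean local law but quotes it directly as a black box from Birman--Solomjak \cite{BS} (their Theorems~3.2 and~3.5); the step you flag as the main obstacle---handling the merely measurable $A$---is thus outsourced. Your sketch via Lebesgue points and two-sided pinching $A_-\le A\le A_+$ points in the right direction but is exactly where the real work lies: naive pinching by constants fails without continuity of $A$, and Birman--Solomjak's actual argument uses more delicate operator-theoretic perturbation estimates rather than pointwise bounds. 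Second, to make the local Neumann forms coercive the paper passes to an auxiliary form $\mathcal{E}_{g,\cW,t}=\mathcal{E}_{g,\cW}+t(\cdot,\cdot)_{L^2}$ on all of $\cW$, proves the asymptotic there, and only at the end compares back to $Z(\rho)$ via the two-sided bound $\lambda^\pm_{k+\tau}(\cW,t)\le\lambda^\pm_k(\cW)\le(1-t)^{-1}\lambda^\pm_k(\cW,Ct)$ with $\tau=\dim Z(\rho)^\perp\le1$; your route---work on $Z(\rho)$ throughout and absorb the codimension-one constraint and the local Neumann kernels as $O(1)$ shifts in the counting function---is equally valid and a bit more direct. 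The paper also localises via a smooth triangulation rather than cubes plus a boundary collar, which avoids your collar estimate but is otherwise equivalent.
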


As a corollary, we obtain classical Weyl asymptotics for the unweighted Laplace eigenvalue problem.

\begin{corollary}[Classical Weyl asymptotics] 
\label{cor:Main} 
Let $M$ be a smooth compact manifold of dimension $\geq 2$ with smooth boundary, and let $g$ be a rough metric on $M$.
Then, the  Laplacian $\Delta_{g,\cW}$ associated to an admissible boundary condition $\cW$ has discrete spectrum with finite dimensional eigenspaces, and
$$ \lim_{\lambda \to \infty}  \frac{N(\lambda, \Delta_{g,\cW})}{\lambda^{\frac{n}{2}}} = \frac{\omega_n}{(2\pi)^n} \mathrm{Vol}(M,g).$$
Above $N(\lambda, \Delta_{g,\cW})$ is the number of eigenvalues of $\Delta_{g,\cW}$ less than $\lambda$.
\end{corollary}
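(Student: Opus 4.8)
The plan is to deduce Corollary \ref{cor:Main} from the main theorem by a suitable choice of weight function. Taking $\rho \equiv 1$, which lies in $L^\beta(M, d\mu_g)$ for every $\beta$ since $M$ is compact and $\mu_g$ is a finite Radon measure, we have $\int_M \rho \, d\mu_g = \mu_g(M) = \Vol(M,g) > 0$, so the hypotheses of the theorem are satisfied. With this choice, $M^+ = M$ and $M^- = \emptyset$, so there are no negative eigenvalues, and the eigenvalue problem \eqref{evprob} becomes $\int_M u \overline{v}\, d\mu_g = \lambda \, \mathcal{E}_{g,\cW}[u,v]$ on $Z(1)$, whose nonzero eigenvalues $\lambda_k^+(\cW)$ are precisely the reciprocals $1/\Lambda_k$ of the nonzero eigenvalues of $\Delta_{g,\cW}$ by the bijection \eqref{evprob2} noted after that display.

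Next I would translate the asymptotic in the theorem into a counting-function statement. The theorem gives
\begin{equation*}
\lim_{k\to\infty} \lambda_k^+(\cW)\, k^{2/n} = \left(\frac{\omega_n}{(2\pi)^n}\right)^{2/n} \Vol(M,g)^{2/n}.
\end{equation*}
Writing $\Lambda_k = 1/\lambda_k^+(\cW)$ for the increasing sequence of (nonzero) eigenvalues of $\Delta_{g,\cW}$, this is equivalent to $\Lambda_k \sim \left(\frac{(2\pi)^n}{\omega_n \Vol(M,g)}\right)^{2/n} k^{2/n}$ as $k\to\infty$. Inverting this asymptotic relation — a standard Abelian/Tauberian bookkeeping step — yields $N(\lambda, \Delta_{g,\cW}) = \#\{k : \Lambda_k \le \lambda\} \sim \frac{\omega_n \Vol(M,g)}{(2\pi)^n}\, \lambda^{n/2}$, which is exactly the claimed formula. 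One must also account for the possible zero eigenvalue of $\Delta_{g,\cW}$ (for instance when $\cW$ contains the constants, as in the Neumann case): it contributes only a bounded additive term to $N(\lambda, \Delta_{g,\cW})$ and hence does not affect the leading asymptotics. The discreteness of the spectrum and finite-dimensionality of the eigenspaces are likewise immediate from the corresponding assertions in the theorem, together with the bijection between nonzero $\lambda$ and nonzero $\Lambda$.

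There is really no substantial obstacle here: the corollary is a direct specialisation of the main theorem. The only points requiring mild care are the elementary equivalence between an asymptotic for the ordered eigenvalues $\lambda_k^{\pm}$ and one for the counting function $N(\lambda, \cdot)$ — which is purely a matter of inverting a monotone asymptotic relation and has nothing to do with rough metrics — and the verification that $\rho \equiv 1$ is an admissible weight, which is trivial by compactness of $M$. I would therefore present the argument as a short paragraph: verify the hypotheses for $\rho=1$, record that $M^- = \emptyset$, invoke the theorem, and convert the eigenvalue asymptotic into the counting-function form, noting that the finitely many (at most one, generically) zero eigenvalues are harmless.
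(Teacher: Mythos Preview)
Your proposal is correct and is precisely the derivation the paper intends: the corollary is stated immediately after the main theorem without a separate proof, and the reduction via $\rho\equiv 1$, the bijection $\Lambda_k = 1/\lambda_k^+(\cW)$, and the inversion of the monotone asymptotic are exactly what is implicit. The only minor difference is that the paper establishes discreteness of $\mathrm{spec}(\Delta_{g,\cW})$ directly in Proposition~\ref{discretespec} via the compact embedding $H^1(M)\hookrightarrow L^2(M,d\mu_g)$, rather than deducing it from the main theorem together with the $\lambda\leftrightarrow\Lambda$ bijection as you do; either route is fine.
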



\subsection{Weyl's law in singular geometric settings} \label{s:sing-geom-set} 
Weyl's law has been previously demonstrated in many singular geometric settings.  Perhaps the most robust method for obtaining Weyl's law in these settings is the so-called heat kernel or semi-group method.  This method can be used to obtain Weyl's law on the manifolds with conical singularities studied by Cheeger \cite{cheeger}.  In that case, the Laplacian has terms with coefficients $r^{-2}$ with $r$ tending to $0$ at the conical singularity.  The heat kernel method can also be used to obtain Weyl's law for non-smooth spaces which arise as the limits of smooth, compact Riemannian manifolds.  Any sequence of smooth, compact Riemannian manifolds with Ricci curvature bounded below has a subsequence 
which converges in the pointed Gromov-Hausdorff sense to a limit space.  These limit spaces were studied in 1997--2000 by Cheeger and Colding \cites{cc1, cc2, cc3}.  In the non-collapsed case, they were able to define a Laplace operator on the limit space and obtain discreteness of its spectrum.  

In 2002,  Ding \cite{ding} used heat kernel techniques to obtain Weyl's law for the non-collapsed limit spaces studied by Cheeger and Colding.  Ding showed that the singular limit space has a well-defined heat kernel.  Relating this heat kernel to those for the smooth spaces, he could extract the Weyl law for the singular limit space.  More recently, Weyl's law has been studied in the context of metric spaces satisfying the Riemannian Curvature Dimension (RCD) condition \cite{RCDWeyl}.  Since it is impractical to provide an exhaustive list of references, we point the reader to the survey article \cite{Ivrii} by Ivrii and references therein, which provides an overview of RCD spaces and their development in a time-linear narrative.  The method used to prove Weyl's law in \cite{RCDWeyl} is also through the short time asymptotic behaviour of the trace of the heat kernel.   There is also a probabilistic approach via heat kernels; see \cite{AHT} by Ambrosio, Honda and Tewodrose where this method is described in the setting of RCD spaces. 

 Whereas the semi-group method can be used to demonstrate Weyl's law in both smooth as well as many singular settings, it is inaccessible in the rough metric setting.  To see this, recall that a key step in this method is to compare the heat kernel $H(t,x,y)$ to the function 
$$(4\pi t)^{-n/2} e^{-\frac{d(x,y)^2}{4t}}, $$
for small times, $t$, for points $x$ and $y$ which are sufficiently close.  Above $d(x,y)$ is the distance between the points $x$ and $y$ on the underlying space.  No such function can be defined without a well-defined notion of distance between points; hence this method is not available in the rough Riemannian manifold setting.  Our space is not obtained as a limit of smooth objects, so the approach of Ding \cite{ding} in the context of the limit spaces studied by Cheeger \& Colding \cites{cc1, cc2, cc3} is also not available.

\subsection{Strategy and structure of the paper} 
Since heat kernel methods are unavailable, as is any method which requires a well-defined notion of distance between points, we focus on abstract approaches rooted in functional analysis.  We are inspired by the work of the Soviet mathematicians, Birman and Solomjak \cite{BS}, who made a fundamental contribution to the study of the eigenvalue asymptotics for elliptic operators with  non-smooth  coefficients nearly  fifty  years ago.  In the present paper, which can be considered as a tribute to their achievements and a popularisation of their results, we demonstrate, for the particular case of  second-order operators in divergence form,  how their results can be carried over from the original setting for a domain in a Euclidean space to rather general compact manifolds with rough metrics.  Birman and Solomjak \cite{BS} obtained the principal terms of the asymptotic behaviour of the Dirichlet and Neumann problems for the equation $\mathcal{B}u=\lambda\mathcal{A}u$, where $\mathcal{A}$ is a self-adjoint elliptic operator, and $\mathcal{B}$ is a self-adjoint operator of lower order. In particular, they considered a generalised Dirichlet eigenvalue problem of the form
$$-\divv A \nabla u = \lambda B u$$ 
inside bounded Euclidean domains $\Omega\subset\R^n$. Above, $A(x)$ is a positive matrix for almost every $x\in \Omega$ such that $A^{-1}\in L^{\alpha}(\Omega)$, $A\in L^{\kappa}(\Omega)$, and $B\in L^{\beta}(\Omega)$ is a function which is in general not of constant sign. Under the conditions
\begin{equation*}
\alpha^{-1}+\beta^{-1}<2n^{-1},
\quad \alpha>n,
\quad \alpha^{-1}+\kappa^{-1}<2n^{-1},
\end{equation*}
they obtain the asymptotic behaviour of both positive and negative eigenvalues in terms of the $L^{\beta}$ norm of the function $B$. They mentioned, in \cite{BS}, that this result still hold for the case $\alpha^{-1}+\beta^{-1}=2n^{-1}$ and $n>2$, for the complete proof see \cite{Rozenblum1976}. Although their work is an invaluable technical tool, due to the different geometric setting and boundary conditions we consider, several additional results must be demonstrated to be able to apply \cite{BS}. 

This work is organised as follows.  In \S \ref{Sec:GeoPrel} we introduce rough metrics, describe their origin and connections with harmonic analysis, and give several examples. Then, in \S \ref{Sec:AnalPrel}, we show how a Laplace operator associated to a rough metric may be defined, and we introduce the admissible boundary conditions together with examples thereof.  Moreover, we demonstrate variational principles in the spirit of Courant, Rayleigh, and Poincar\'e for general eigenvalue problems like those considered here.  Our main results are proven in \S \ref{proofs}.  Concluding remarks are offered in \S \ref{Sec:CR}.  

\section*{Acknowledgements}
The first author was supported by the Knut and Alice Wallenberg foundation, KAW 2013.0322 postdoctoral program in Mathematics for researchers from outside Sweden, and by SPP2026 from the German Research Foundation (DFG).  The second author was partially supported by the Ministry of Education Science of the Republic of Kazakhstan under the grant AP05132071.  These authors also acknowledge the gracious support of the organisers of the event ``Harmonic Analysis of Elliptic and Parabolic Partial Differential Equations'' at CIRM Luminy as well as
the latter organisation.

\section{The rough Riemannian manifold setting}  \label{Sec:GeoPrel}
Throughout, we fix $M$ to denote a compact manifold of dimension equal to or exceeding $2$ with a smooth differentiable structure.
If the manifold has nonempty boundary $\partial M$, then we assume that it is smooth.
We let $T_x M$ and $T^\ast_x M$ be the tangent  and cotangent spaces at $x$, respectively, and $TM$ and $T^\ast M$ be the corresponding associated bundles.  The tensor bundles of covariant rank $q$ and contravariant rank $p$ are then denoted by $T^{(p,q)}M = (\oplus_{j=0}^p T^\ast M) \oplus (\oplus_{k=0}^q TM)$. 

In addition to a differentiable structure, such a space affords us with a notion of measurability independent of a {Riemannian} metric:  we say that a set $A$ is measurable if for every chart $(U,\psi)$ with $U \cap A \neq \emptyset$, we have that $\psi(A\cap U)$ is Lebesgue measurable in $\R^n$.  
We shall use $\cL$ to denote the Lebesgue measure in $\R^n$.  
Proposition 1 in \cite{BRough} shows that this notion of measurability is equivalent to asking for $A$ to be $\mu_h$-measurable, where $h$ is any smooth Riemannian metric on $M$, and $\mu_h$ is its induced volume measure.   
With this, we obtain a notion of a measurable section of a $(p,q)$ tensor.  
The set in which these objects live will be denoted by $\Gamma(T^{(p,q)}M)$. 

Similarly, we can define a measurable set $Z \subset M$ to be of zero measure if for every chart $(U,\psi)$,  when $U \intersect Z \neq \emptyset$, we have that $\psi(U \intersect Z)$ has zero Lebesgue measure.
This yields a notion of almost-everywhere in $M$ without alluding to a measure.
It is straightforward to verify that if $Z$ is of zero measure, then $\mu_h(Z) = 0$ for any smooth metric $h$.  Similarly, a property $P$ holds almost-everywhere precisely when $P$ holds $\mu_h$ almost-everywhere for any smooth metric $h$.

We can now present the precise notion of a \emph{rough metric}. 

\begin{definition}[Rough metric] \label{def:roughmetric} 
We say that a symmetric $(2,0)$ measurable tensorfield $g$ is a \emph{rough metric} if it satisfies the following \emph{local comparability condition}: for each $x \in M$, there exists a chart $(U_x,\psi_x)$ containing $x$ and a constant $C(U_x)  \geq 1$ such that 
$$ C(U_x)^{-1} \modulus{u}_{\psi_x^\ast \delta(y)} \leq \modulus{u}_{g(y)} \leq C(U_x) \modulus{u}_{\psi_x^\ast \delta(y)}$$
for almost-every $ y \in U_x$, for all $u \in T_y M$.
Above, $\psi_x^\ast \delta$ is the pullback to $U_x$ of the $\R^n$ scalar product inside $\psi(U_x)$. 
\end{definition} 

\begin{remark} \label{rmkrough}  As a consequence of the compactness of $M$, we note that the compatibility condition is equivalent to demanding that there exists a \em smooth Riemannian metric, $h$, \em on $M$ such that 
$$ C(U_x)^{-1} \modulus{u}_{h}  \leq \modulus{u}_{g} \leq C(U_x) \modulus{u}_{h}$$
 for almost-every $y \in U_x$,  where $U_x$, $u$, and $C(U_x)$ are as in Definition \ref{def:roughmetric}.
\end{remark}

Due to the regularity of the coefficients of a general rough metric $g$, it is unclear how to associate a canonical distance structure to $g$. However, the expression 
$$ \sqrt{ \det g(x)}\ d\psi_x^\ast \mathcal{L},$$
for almost-every $x \in U_x$ inside a compatible a chart $(U_x, \psi_x)$, can readily be checked to transform consistently under a change of coordinates. This yields a Radon measure that is independent of coordinates, which we denote by $\mu_g$.

\subsection{Rough metrics in harmonic analysis} \label{Sec:RMHM}  
These \emph{rough metrics} which are a focus in this paper were observed by Bandara in \cite{BRough} to be geometric invariances of the \emph{Kato square root problem} on manifolds without boundary.
 In a nutshell, this problem is to prove that $\dom(\sqrt{-\divv B \nabla}) = H^{1}(M)$ for \emph{bounded, measurable, complex, non-symmetric, elliptic} coefficient matrices, $x \mapsto B(x)$.  In the case of $M = \R^n$, this problem resisted resolution for over forty years.  It was finally settled by Auscher, Hofmann, Lacey, McIntosh, and Tchamitchian in \cite{AHLMcT}.  The first-order formulation of the problem by Axelsson\footnote{Andreas Axelsson is the former name of Andreas Ros\'en.}, Keith and McIntosh in \cite{AKM} allowed the problem to be considered in geometric settings.
Their approach was to obtain a solution to this problem by showing that an associated operator $\Pi_B$, which in part consists  of the
original operator in question, has an $H^\infty$ functional calculus.
In this paper, the authors obtained a solution of this problem on compact manifolds without boundary.  In the non-compact setting, Morris in \cite{Morris} first
obtained results in this direction for Euclidean submanifolds under second fundamental form bounds. 
Later, Bandara and McIntosh considered the intrinsic picture in \cite{BMc} and demonstrated that this problem can be solved under appropriate lower bounds on injectivity radius as well as Ricci curvature bounds.

A fundamental question of McIntosh was to understand the limitations of the methods used in this geometric version of the problem.  Exploiting the stability of the problem under $L^\infty$ perturbations, Bandara in \cite{BRough} showed that the problem could be solved far more widely than the previously used tools appeared to allow.  He showed that if the problem can be solved for some Riemannian metric $h$, then it also admits a solution for any rough metric $g$ which is $L^\infty$-close to $h$.  In this sense, rough metrics naturally emerged as geometric invariances of the Kato square root problem.  Indeed, the rough metrics as we have defined them here were introduced and investigated in \cite{BMc} and \cite{BRough} as geometric invariances of the Kato square root problem seen through the functional calculus of its  first-order characterisation.

It was shown in both \cite{AKM} by Axelsson (Ros\'en), Keith and McIntosh and later in \cite{BCont} by Bandara, that on smooth boundaryless compact manifolds, the Kato square root problem has a
positive solution. Counterexamples were first demonstrated by McIntosh in \cite{Mc71} and later adapted by Auscher in \cite{Auscher}. 
These counterexamples relied on having an operator whose spectrum grows exponentially.  Since the Kato square root problem can be solved in the boundaryless case for rough metrics, one may conjecture that the Laplacian associated to a rough metric ought to satisfy Weyl asymptotics.  Indeed, this was a key observation that prompted our investigation of the spectrum of the Laplacian on rough Riemannian manifolds with boundary.

\subsection{Geometric examples}
There are many natural examples of rough metrics, and here, we present here a small motivating collection.  
It is readily checked that every smooth or even continuous metric is rough. In particular, a metric of the form $g = \psi^\ast h$ is rough whenever $\psi: (M,h) \to (M,h)$ is a lipeomorphism, and $h$ is smooth.  
Recall that a \emph{lipeomorphism} is a homeomorphism that is also locally Lipschitz with locally Lipschitz inverse.
\begin{example}[Rough metrics arising from Lipschitz graphs]
Let $M$ be a smooth compact manifold with smooth boundary, and let $h$ be a smooth metric on $M$.  Let $N$ be some other smooth compact manifold with smooth boundary with metric $h'$.   Fix a Lipschitz function $f: M \to N$.  Note that 
$\Phi_f: M \to \graph(f) \subset M \times N$ given by $\Phi_f(x) = (x,f(x))$ is a lipeomorphism to its image.  Moreover, 
$$g_f(u,v) = (d\Phi_f(x)u, d\Phi_f(x)v)_{h \otimes h'},$$
defines a metric tensor on $M$.  Given the regularity of $f$, we have that $\Phi_f$ is a lipeomorphism.

To see that $g_f$ is a rough metric, fix $x \in M$, and let $\psi_x: U_x \to B(2,0)$ be a chart.
Letting $\sigma$ be a curve inside $U_x$, we obtain that 
$|\sigma'(t)|^2_{g_f}  = |\sigma'(t)|_{h}^2 + |df (\sigma'(t))|_{h'}^2.$
Now, 
$$
|df (\sigma'(t))|_{h'} \leq \sup_{{ t \in [0,1]}} |\mathrm{Lip}  f(\sigma(t)) | \modulus{\sigma'(t)}_{h} \leq {  \sup_{y \in U_x} |\mathrm{Lip} f(y)| \modulus{\sigma'(t)}_{h}}
$$
where 
$$ |\mathrm{Lip}f(y)| = \limsup_{z \to y} \frac{d_{h'}(f(y), f(z))}{d_{h}(y,z)}.$$
Since $f$ is a Lipeomorphism, this supremum is finite.  Thus $df$ is defined for almost every $t$, and it is a linear map between { $T_x M \to T_{f(x)} N$.}
Therefore, we have that for $\mu_g$ almost every $y \in U_x$ and $u \in T_y M$, setting $C(U_x) = \max\set{\sup_{U_x} |\mathrm{Lip} f|, 1}$ 
$$ C(U_x)^{-1} \modulus{u}_{h}  \leq \modulus{u}_{g_f} \leq C(U_x) \modulus{u}_{h}.$$
Consequently, $g_f$ is indeed a rough metric on $M$.

As a concrete example, let $M = B(1,0)$ be the unit ball with the Euclidean metric in $\R^n$, and $N = \R$ also with the standard Euclidean distance.  Then, $M \times N = B(1,0) \times \R \subset \R^{n+1}$, and the the metric tensor 
$h \otimes dt^2$ in this case is the usual Euclidean metric on $\R^{n+1}$.  If $f:B(1,0) \to \R$ a Lipschitz map, then $g_f$ defined as above is a rough metric on $B(1,0)$.  Although this example may seem contrite, we note that in the case $M=B(1,0) \subset \R^2$, given any set $E$ which has Lebesgue measure zero, there exists a Lipschitz function for which the singular set of this Lipschitz function, i.e. where it fails to be differentiable, contains the set $E$.  This set, $E$, can be a \em dense subset \em of $M=B(1,0)$.  We therefore see that even on the ball $B(1,0) \subset \R^2$, there exist non-trivial and highly singular rough metrics.

We remark that in general, we do not treat the case of Lipschitz boundary, but our methods apply to those manifolds with Lipschitz boundary which are lipeomorphic to a smooth manifold.  There are many Lipschitz manifolds that are not lipeomorphic or even homeomorphic to a smooth manifold.  In dimensions exceeding $4$, there are Lipschitz manifolds that do not admit a smooth structure.  This is seen by combining \cite{Kervaire} by Kervaire,  where he demonstrates the existence of topological manifolds without a smooth structure in dimensions exceeding $3$, and \cite{Sullivan} by Sullivan who shows that every topological manifold can be made into a Lipschitz manifold for dimensions exceeding $4$.  Although we do not treat these cases, our methods may be helpful for understanding these settings in the future.
\end{example}

\begin{example}[Manifolds with geometric cones]
Let $M$ be a smooth compact manifold with smooth boundary. Suppose that 
there are points ${x_1, \dots, x_k} \subset M$ and neighbourhoods $(\psi_i, U_i)$ of $x_i$
with $\psi_i(U_i) = B(1,0)$, the Euclidean ball of radius $1$.  Moreover, define maps $\Phi_i: U_i \to \R^{n+1}$ by 
$$\Phi_i(x) = (\psi_i(x), \cot(\alpha_i/2) (1 - |\psi_i(x)|_{\R^n})), \alpha_i \in (0, \pi].$$ 
A metric $g \in C^\infty(M \setminus \set{x_1, \dots, x_i})$ has geometric cones at 
$x_i$ if 
$$g(x) = \Phi_i^\ast(x)(\cdot,\cdot)_{\R^{n+1}}$$ 
inside $\psi_i^{-1}(B(1,0))$.

Such a cone point is, in fact, a conical singularity of angle $\alpha_i$.  That is, there exists a chart near $x_i$ such that the metric takes the form $g = dr^2 + \sin^2(\alpha_i) r^2 dy^2$. To see this, fix such a point $x_i$ with associated $\alpha_i \in (0,\pi]$.  
Let $(r, y) \in (0,1] \times S^{n-1}$ be polar coordinates in $B(1,0)$.  

Let $\gamma:I \to B(1,0)$ with $\gamma(0) \neq 0$, and let $\tilde{\gamma} = \psi_i^{-1}(\gamma)$.
Define 
$$\sigma(t) := (\Phi_i \circ \tilde \gamma)(t) = ( \gamma(t), \cot(\alpha_i/2)(1 - \gamma_r(t))),$$
where $\gamma_r (t) = | \gamma(t)|_{\R^n}$.  Therefore, 
$$\sigma'(t) = (\gamma'(t), -\cot(\alpha_i/2)\gamma_r'(t)).$$
In polar coordinates, $\gamma(t) = \gamma_r(t) \gamma_y(t)$, with $|\gamma_y(t)| = 1$.  We therefore compute that 
\begin{multline*}
\modulus{\tilde{\gamma}'(0)}_{g}^2 = (\sigma'(0), \sigma'(0))_{\R^{n+1}} 
	= \gamma_r^2(0) {\gamma_y'(0)}^2 + {\gamma_r'(0)}^2 + \cot^2(\alpha_i/2)^2 {\gamma_r'(0)}^2 \\
 	= \gamma_r^2(0) {\gamma_y'(0)}^2 + (1 + \cot^2(\alpha_i/2)) {\gamma_r'(0)}^2
 	= \gamma_r^2(0) {\gamma_y'(0)}^2 + \csc^2(\alpha_i/2)) {\gamma_r'(0)}^2. 
\end{multline*}

This shows that inside this chart, $g = \csc^2(\alpha_i/2) dr^2 + r^2 dy^2$.  A simple change to the coordinate
system $(r, y) \mapsto (\tilde{r}, y)$ given by $\tilde{r} = \csc(\alpha_i/2) r$, shows that in these coordinates, 
$g = d\tilde{r}^2 + \sin^2(\alpha_i/2) \tilde{r}^2 dy^2.$

The quintessential example in the situation with boundary is the standard cone of angle $\pi/2$, given by $M = \graph(f)$
where $f: B(1,0) \to \R$ is given by $f(x) = 1 - |x|$.  In the absence of boundary, the ``witch's hat sphere metric'' on the $n$-sphere $\mathbb{S}^n$ is a particular example of a manifold with a geometric cone which was considered by Bandara, Lakzian and Munn in  \cite{BLM}.\footnote{The term ``witch's hat'' arises from the Australian vernacular for a traffic cone.}  They studied a geometric flow tangential to the Ricci flow in a suitable sense that was defined by Gigli and Mantegazza in \cite{GM}.  An appealing feature of this flow is that it can be defined in many singular geometric settings such as metric spaces satisfying the RCD criterion. 
The regularity properties of this flow were not considered in \cite{GM}, which motivated the study of the flow in \cite{BLM} on the ``witch's hat sphere metric.''
This metric is the standard $n$-sphere metric away from a neighbourhood of the north pole, at the north pole, there is a geometric cone singularity of angle $\pi/2$. 
\begin{figure}
	\centering
	\includegraphics[width=40mm]{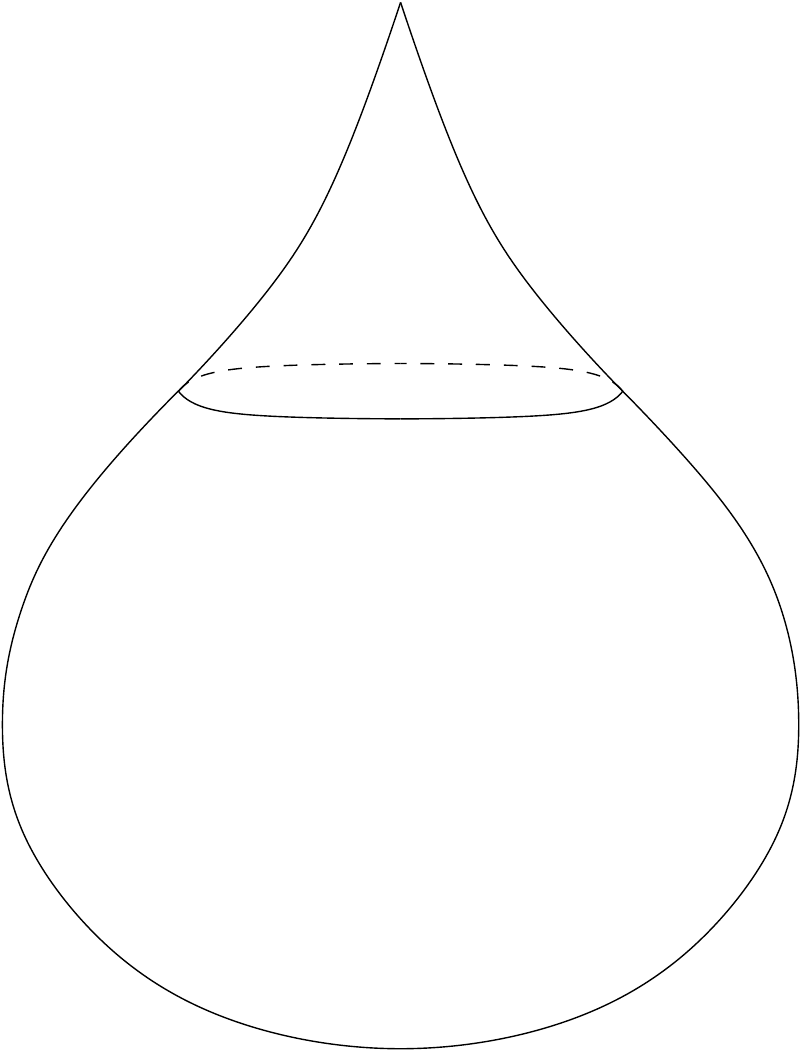}
	\caption{ {The witch's hat sphere metric.} }
	\label{fig:method}
\end{figure}
\end{example} 

\section{Analytic preliminaries}  \label{Sec:AnalPrel}
\subsection{Notation}
Throughout this paper, we assume the Einstein summation convention. That is, whenever  a raised index appears against a lowered index,  unless specified otherwise, we sum over that index.  
By $\#S$,  we denote the cardinality of a given set $S$.
In our analysis, we often write $a\lesssim b$ to mean that $a \leq Cb$, where $C>0$ is some constant. The dependencies of $C$ will either be explicitly specified or otherwise, clear from context. By $a\approx b$ we mean that $a\lesssim b$ and $b\lesssim a$.

\subsection{Dirichlet Forms and operators}\label{F_and_O}
Here we introduce some facts regarding closed symmetric densely defined  forms and the self-adjoint operators they generate. We let $\dom(\cdot)$ denote the domain of either an operator or a form.  

Let $\mathcal{H}$ be a separable Hilbert space with scalar product $(\cdot,\cdot)$, and $\mathcal{E}$ be a closed symmetric densely defined form in $\mathcal{H}$ such that
\begin{equation*}
\mathcal{E}[x,x]\gtrsim 0,
\qquad x\in \dom(\mathcal{E}).
\end{equation*}
Then, $\mathcal{E}$ generates a unique self-adjoint, non-negative operator $T$ in $\mathcal{H}$  with domain $\dom(T) \subset \dom(\mathcal{E})$ such that
\begin{equation*}
\mathcal{E}[x,y] = (Tx, y) 
\end{equation*}
for all $x \in \dom(T)$ and $y \in \dom(\mathcal{E})$. Moreover,
\begin{equation*}
\dom\left(T^{1/2}\right)=\dom(\mathcal{E})\quad\text{and}\quad \left(T^{1/2}x,T^{1/2}y\right)=\mathcal{E}[x,y]
\end{equation*}
for all $x$, $y\in \dom(\mathcal{E})$. 
If, additionally, the form $\mathcal{E}$ is strictly positive, i.e. $\mathcal{E}[x,x] \gtrsim \|x\|^2$, then $T$ is a strictly positive operator.
For a more detailed exposition of these results, see Theorem 2.1 and Theorem 2.32 in \S2 Chapter VI in \cite{Kato}.

Next, assume that $a$ is a completely continuous symmetric form in $\mathcal{H}$. Then it generates a unique completely continuous self-adjoint operator $A$ in $\mathcal{H}$ such that $a[u,v]=(Au,v)$ for $u$, $v\in \mathcal{H}$; see Section 2.2 in \cite{Stummel}.  We note that a completely continuous operator in a Hilbert space is compact.  
\begin{rem}\label{sense_of_eig_problem}
In this work we often investigate the eigenvalues of the problem
\begin{equation*}
a[u,v]=\lambda(u,v)
\quad
in
\quad
\mathcal{H},
\end{equation*}
by which we mean the eigenvalues of the unique operator $A$ such that $a[u,v] = (Au,v)$. Recall that every completely continuous self-adjoint operator is a compact self-adjoint operator and therefore has discrete spectrum accumulating at $0$; see Section 2.5 in \cite{Stummel}. 
\end{rem}

In the subsequent analysis, the following variational principles are indispensable tools.  The first variational principle is the so-called min-max characterisation of Courant; see \cite[Theorem 6.1]{Gorbachuk}. The second and third ones are called Poincar\'e's and Rayleigh's variational principles, respectively. We expect they are known but were unable to locate a proof in the generality required here, so we include the proofs.
\begin{theorem}\label{VP}
	Let $\mathcal{H}$ be a Hilbert space with scalar product $(\cdot,\cdot)$. Assume that $A$ is a completely continuous self-adjoint operator in $\mathcal{H}$, with the positive eigenvalues $\{\lambda_j^{+}\}_{j=1}^{\infty}$ and negative eigenvalues $\{-\lambda_j^{-}\}_{j=1}^{\infty}$ such that $\lambda_{k+1}^{\pm}\leq\lambda_{k}^{\pm}$. Let $\{u^{\pm}_j\}_{j=1}^{\infty}$ be the corresponding eigenfunctions. Then
	\begin{enumerate}[label=(\roman*)]
		\item\label{VPI} Courant's variational principle
		\begin{equation*}
		\lambda_{k}^{\pm}=\min_{L\subset \mathcal{H},\dim L^{\perp}=k-1}\max_{u\in L\setminus\{0\}}\pm\frac{(Au,u)}{(u,u)}.
		\end{equation*}
		\item\label{VPII} Poincar\'e's variational principles
		\begin{equation*}
		\lambda_{k}^{\pm}=\max_{V\subset \mathcal{H},\dim V=k}\min_{u\in V\setminus\{0\}}\pm\frac{(Au,u)}{(u,u)}.
		\end{equation*}
		\item\label{VPIII} Rayleigh's variational principles
		\begin{equation*}
		\lambda_k^{\pm}=\max\left\{\pm\frac{(Au,u)}{(u,u)}: \; u\in \{u^{\pm}_1,...,u^{\pm}_{k-1}\}^{\perp} \right\}.
		\end{equation*}
	\end{enumerate}
\end{theorem}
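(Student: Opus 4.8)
The plan is to reduce everything to the spectral decomposition of the compact self-adjoint operator $A$. Since $A$ is completely continuous and self-adjoint on the separable Hilbert space $\mathcal{H}$, the spectral theorem gives an orthonormal system of eigenfunctions $\{u_j^+\}\cup\{u_j^-\}$ (together with an orthonormal basis of $\nul(A)$) such that $Au = \sum_j \lambda_j^+ (u,u_j^+)u_j^+ - \sum_j \lambda_j^- (u,u_j^-)u_j^-$, with $\lambda_j^\pm>0$ decreasing to $0$. The quadratic form is then $(Au,u) = \sum_j \lambda_j^+|(u,u_j^+)|^2 - \sum_j\lambda_j^-|(u,u_j^-)|^2$. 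All three principles are statements purely about this diagonalized form, and I would prove the ``$+$'' case only, the ``$-$'' case following by replacing $A$ with $-A$.

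First I would establish Rayleigh's principle \ref{VPIII}, which is the easiest: on the subspace $\{u_1^+,\dots,u_{k-1}^+\}^\perp$, the expansion of $(Au,u)$ above loses its first $k-1$ positive terms, so $(Au,u) \le \lambda_k^+ \sum_j |(u,u_j^+)|^2 \le \lambda_k^+ (u,u)$, with equality attained at $u=u_k^+$; this gives the claimed $\max$. Next, Poincaré \ref{VPII}: for the lower bound take $V = \mathrm{span}\{u_1^+,\dots,u_k^+\}$, on which $\min_{u\in V\setminus\{0\}} (Au,u)/(u,u) = \lambda_k^+$ by the same diagonal computation; for the upper bound, given any $V$ with $\dim V = k$, a dimension count shows $V$ meets the closed subspace $\{u_1^+,\dots,u_{k-1}^+\}^\perp \cap \nul(A^+)^{\perp\!\perp}$-type complement nontrivially — more precisely $V \cap \{u_1^+,\dots,u_{k-1}^+\}^\perp \neq \{0\}$ since the latter has finite codimension $k-1 < k$ — and on any such vector $(Au,u)/(u,u) \le \lambda_k^+$ by Rayleigh. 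Finally Courant \ref{VPI}: for the upper bound on the $\min$, choose $L = \{u_1^+,\dots,u_{k-1}^+\}^\perp$ (so $\dim L^\perp = k-1$) and apply Rayleigh to get $\max_{u\in L} (Au,u)/(u,u) = \lambda_k^+$; for the lower bound, given any $L$ with $\dim L^\perp = k-1$, the subspace $L \cap \mathrm{span}\{u_1^+,\dots,u_k^+\}$ is nonzero by dimension count ($k + (k-1) > $ codimension considerations in the span of the first $k$ eigenvectors, i.e. $L^\perp$ cannot contain the whole $k$-dimensional span), and on such a vector $(Au,u)/(u,u) \ge \lambda_k^+$.

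The main obstacle I anticipate is bookkeeping rather than conceptual difficulty: one must handle the kernel of $A$ carefully (it may be infinite-dimensional, finite, or trivial), and make sure the dimension-counting intersection arguments are valid when some subspaces are infinite-dimensional but of finite codimension. The cleanest way is to note that for a subspace $S$ of finite codimension $m$ and a subspace $W$ with $\dim W = m+1$, one always has $S \cap W \neq \{0\}$ — this is the only linear-algebra fact needed, and it applies verbatim in infinite dimensions. One should also confirm that the $\min$ and $\max$ in the statements are genuinely attained (not merely $\inf$/$\sup$), which follows because the extremizing subspaces are exhibited explicitly in each case, and because on any fixed finite-dimensional subspace the continuous function $u\mapsto (Au,u)/(u,u)$ attains its extrema on the unit sphere. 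With these observations in place, the three proofs are short and essentially identical in structure.
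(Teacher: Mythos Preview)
Your proposal is correct, and the underlying ingredients---the eigenfunction expansion $(Au,u)=\sum_j\lambda_j^+|(u,u_j^+)|^2-\sum_j\lambda_j^-|(u,u_j^-)|^2$, testing against $\mathrm{span}\{u_1^\pm,\dots,u_k^\pm\}$, and the finite-codimension intersection lemma---are exactly what the paper uses as well. The organisation differs, however. The paper takes Courant's principle \ref{VPI} as known (citing \cite[Theorem~6.1]{Gorbachuk}) and then deduces the upper bound in Poincar\'e \ref{VPII} from it: given any $k$-dimensional $V$, it intersects the optimal $L$ from \ref{VPI} nontrivially, and the value of the Rayleigh quotient there is squeezed between $\lambda_k^\pm$ and the min over $V$. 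You instead prove Rayleigh \ref{VPIII} first and use it to bound Poincar\'e from above (via $V\cap\{u_1^+,\dots,u_{k-1}^+\}^\perp\neq\{0\}$), and then you supply a direct proof of Courant \ref{VPI} that the paper omits. Your route is thus more self-contained, while the paper's is shorter by outsourcing \ref{VPI}; the actual arguments for \ref{VPIII} are essentially identical (the paper splits into the spectral subspaces $\mathcal{H}^\pm$ where you work with the full expansion at once, but this is cosmetic).
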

\begin{proof}
	As we mentioned above, we prove \ref{VPII} and \ref{VPIII}. We first derive \ref{VPII}. Since $\{u^{\pm}_j\}_{j=1}^{\infty}$ are the eigenfunctions corresponding to eigenvalues $\{\pm\lambda^{\pm}_j\}_{j=1}^{\infty}$, it follows
	\begin{equation}\label{max_min_th1ALT}
	\max_{V\subset \mathcal{H},\dim V=k}\min_{u\in V\setminus\{0\}}\pm\frac{(Au,u)}{(u,u)}\geq \min_{u\in \textrm{span}\{u_1^{\pm},...,u_k^{\pm}\}}\pm\frac{(Au,u)}{(u,u)}=\lambda_{k}^{\pm}.
	\end{equation}
	
	Let $L$ be the space for which the right side of \ref{VPI} is achieved. Then for any $k-$dimensional $V\subset \mH$, there exists $f\in L\cap V$.  Therefore
	\begin{equation*}
	\lambda_{k}^{\pm}=\max_{u\in L\setminus\{0\}}\pm\frac{(Au,u)}{(u,u)}\geq \pm\frac{(Af,f)}{(f,f)}\geq \min_{u\in V\setminus\{0\}}\pm\frac{(Au,u)}{(u,u)}.
	\end{equation*}
	Since this is true for all $k-$dimensional $V\subset \mathcal{H}$, we conclude
	\begin{equation*}
	\lambda_{k}^{\pm}\geq\max_{V\subset \mathcal{H},\dim V=k}\min_{u\in V\setminus\{0\}}\pm\frac{(Au,u)}{(u,u)}.
	\end{equation*}
	This together with \eqref{max_min_th1ALT} proves \ref{VPII}.
	
	Next, let us prove \ref{VPIII}. Let $\mathcal{H}^-$ and $\mathcal{H}^+$ be the spectral spaces corresponding to the negative and positive spectrum of $A$. Fix $u\in \mathcal{H}^{\pm}\cap \{u^{\pm}_1,...,u^{\pm}_{k-1}\}^{\perp}$, and assume $u=\sum_{j=1}^{\infty}a_ju_j^{\pm}$. Then $a_j=0$ for $j<k$ and
	\begin{equation*}
	\pm(Au,u)=\sum_{j=k}^{\infty}\lambda_j^{\pm}a_j^2\|u_j^{\pm}\|^2\leq \lambda_k^{\pm}\sum_{j=k}^{\infty}a_j^2\|u_j^{\pm}\|^2=\lambda_k^{\pm}\|u\|^2.
	\end{equation*}
	Note that for $u=u_k^{\pm}$ the inequality above becomes an equality. Therefore
	\begin{equation}\label{Rayleigh's_principle3ALT}
	\lambda_k^{\pm}=\max\left\{\pm\frac{(Au,u)}{(u,u)}: \; u\in \mathcal{H}^{\pm}\cap\{u^{\pm}_1,...,u^{\pm}_{k-1}\}^{\perp} \right\}.
	\end{equation}
	Let $f\in\{u^{\pm}_1,...,u^{\pm}_{k-1}\}^{\perp}$ and $f^{\pm}$ be projections of $f$ into $\mathcal{H}^{\pm}$. Then we write
	\begin{equation*}
	(Af,f)=(Af^-,f^-)+2(Af^+,f^-)+(Af^+,f^+).
	\end{equation*}
	Hence, since $\pm(Af^{\pm},f^{\pm})\geq 0$ and $(Af^+,f^-)=0$, we derive $\pm(Af,f)\leq \pm(Af^{\pm},f^{\pm})$. Therefore the right hand side of \ref{VPIII} does not exceed the right hand side of \eqref{Rayleigh's_principle3ALT}. On the other hand, the right side of \eqref{Rayleigh's_principle3ALT} by its very definition does not exceed the right side of \ref{VPIII}.
\end{proof}

\subsection{Laplacian associated to admissible boundary conditions}
\label{S:Lap}
As aforementioned, a rough metric has a canonically associated Radon measure, $\mu_g$, and so we may define $L^k(T^{(p,q)} M,\ d\mu_g)$ spaces in the usual way.  
The Sobolev spaces $H^k(M)$ and $H^k_0(M)$ on a compact Riemannian manifold with boundary are independent of the metric.
The central issue for us is to ensure that $H^1(M)$ and $H^1_0(M)$ agree with the domains of the defining operators for us in our Dirichlet forms.
For this, we need the fact that 
$\nabla_2 = \nabla = d:  C^\infty \cap L^2(M,\ d\mu_g) \to C^\infty \cap L^2(T^\ast M, \ d\mu_g)$ is a closable operator.
This uses the fact that the exterior derivative $d$ depends only on the differential structure of $M$ as well as the properties of the rough metric.
See \cite{BRough} for details. 
Armed with this fact, we assert that $H^1(M) = \dom( \overline{\nabla_2})$.
Moreover, we also consider $\nabla_c = \nabla: C^\infty_c(M) \to C^\infty_c(M)$. 
Since $\dom(\nabla_c) \subset \dom(\nabla_2),$ we obtain $H^1_0(M) = \dom (\overline{\nabla_c}) = \overline{C^\infty_c}^{\norm{\cdot}_{H^1}}$.  
In the situation that $\partial M = \emptyset$, we obtain that $H^1_0(M) = H^1(M)$. 

Recall that in the case of a smooth metric, the Laplacian obtained by the Dirichlet forms $\mathcal{E}_{N}[u,v] = (\nabla u, \nabla v)_{L^2(T^\ast M, \ d\mu_g)}$  with $\dom(\mathcal{E}_N) = H^1(M)$ and  $\mathcal{E}_D[u,v] = \mathcal{E}_N[u,v]$ but with 
domain $\dom(\mathcal{E}_D) = H^1_0(M)$ respectively yield the Neumann and Dirichlet Laplacians (c.f. \S4 and \S7 in \cite{IS}).
To define Neumann and Dirichlet boundary conditions in the classical setting, we require that the metric
induces a surface measure and has an accompanying Stokes's Theorem. However, in our context of a rough metric, since the coefficients are assumed to be only measurable, 
it is unclear how to extract a surface measure despite the fact that our boundary is smooth.
However, the Dirichlet form perspective for both these problems persist in our setting and hence, we will retain the nomenclature and call
the Laplacians obtained by these energies respectively Neumann and Dirichlet.

In fact, in this paper, we will consider more general boundary conditions where the Dirichlet and Neumann Laplacians are
the extreme ends. Namely, we introduce 

\begin{definition}[Admissible boundary condition] 
Let $\cW \subset H^1(M)$ be a closed subspace of $H^1(M)$ such that $H^1_0 \subset \cW$. 
Then we call $\cW$ an admissible boundary condition. 
\end{definition}

Define the Dirichlet form, $\mathcal{E}_{g,\cW}: \cW \times \cW \to \C$ associated to $\cW$ by
$$
\mathcal{E}_{g,\cW}[u,v] = (\nabla u, \nabla v)_{L^2(T^\ast M, \ d\mu_g)}.$$
From the representation theorems, namely, Theorem 2.1 in \S2  and Theorem 2.23 in \S6 in Chapter IV of \cite{Kato} by Kato, 
we obtain the Laplacian, $\Delta_{g,\cW}$.  It is a non-negative  self-adjoint operator
with domain $\dom(\Delta_{g,\cW}) \subset \cW$ and with $\dom(\sqrt{\Delta_{g,\cW}}) = \cW$.
Defining $\nabla_{\cW}$ as the operator $\nabla $ with domain $\cW$, we see that it is a closed operator and hence obtain a densely-defined and closed adjoint $\nabla_{\cW,g}^\ast$ by Theorem 5.29 in \S5 of Chapter 3 in \cite{Kato}.  A routine operator theory argument yields
$$\Delta_{g,\cW} = \nabla^{\ast}_{g,\cW} \nabla_{\cW}.$$

\begin{prop} \label{discretespec} 
The spectrum of $\Delta_{g, \cW}$ is discrete with no finite accumulation points and with each eigenspace being of finite dimension.
\end{prop}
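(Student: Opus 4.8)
The plan is to reduce the discreteness of the spectrum of $\Delta_{g,\cW}$ to a compact embedding statement, and then to establish that compact embedding by comparing the rough metric $g$ to a smooth background metric $h$ via Remark \ref{rmkrough}. Concretely, recall that $\dom(\sqrt{\Delta_{g,\cW}}) = \cW$ with the graph norm of $\sqrt{\Delta_{g,\cW}}$ equivalent to the $H^1$-norm, since $\mathcal{E}_{g,\cW}[u,u] + \|u\|_{L^2(M,d\mu_g)}^2 = \|\nabla u\|_{L^2(T^\ast M, d\mu_g)}^2 + \|u\|_{L^2(M,d\mu_g)}^2 \approx \|u\|_{H^1}^2$, where the last comparison uses the local comparability condition to pass between the $\mu_g$-norms and the $\mu_h$-norms for a fixed smooth metric $h$. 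Thus, by the standard criterion (e.g. the spectral theorem together with the fact that a nonnegative self-adjoint operator $T$ has discrete spectrum iff the embedding $\dom(\sqrt{T}) \hookrightarrow \mathcal{H}$ is compact), it suffices to show that the inclusion $\cW \hookrightarrow L^2(M, d\mu_g)$ is compact.

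First I would fix, via Remark \ref{rmkrough} and compactness of $M$, a smooth Riemannian metric $h$ on $M$ and a global constant $C \geq 1$ with $C^{-1}|u|_h \leq |u|_g \leq C|u|_h$ almost everywhere; this immediately gives $C^{-n}\,d\mu_h \leq d\mu_g \leq C^n\,d\mu_h$ as Radon measures, and the analogous two-sided bound between $\|\nabla u\|_{L^2(T^\ast M, d\mu_g)}$ and $\|\nabla u\|_{L^2(T^\ast M, d\mu_h)}$. Consequently the $H^1$-norm computed with respect to $g$ and with respect to $h$ are equivalent, and the $L^2(M, d\mu_g)$-norm is equivalent to the $L^2(M, d\mu_h)$-norm. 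Since $\cW \subset H^1(M)$ is a closed subspace, and since the underlying manifold $(M,h)$ is a smooth compact manifold with smooth boundary, the Rellich--Kondrachov theorem gives that the inclusion $H^1(M) \hookrightarrow L^2(M, d\mu_h)$ is compact; restricting to the closed subspace $\cW$ preserves compactness. Transporting this through the norm equivalences yields that $\cW \hookrightarrow L^2(M, d\mu_g)$ is compact.

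Having the compact embedding, I would then invoke the operator-theoretic facts recorded in \S\ref{F_and_O}: the form $\mathcal{E}_{g,\cW} + (\cdot,\cdot)_{L^2(M,d\mu_g)}$ is closed, symmetric, densely defined and strictly positive on $\cW$, so it generates a strictly positive self-adjoint operator $\Delta_{g,\cW} + I$ whose inverse $(\Delta_{g,\cW}+I)^{-1}$ is bounded on $L^2(M,d\mu_g)$ and factors through the compact inclusion $\cW \hookrightarrow L^2(M,d\mu_g)$ (it maps $L^2$ boundedly into $\cW$ with its $H^1$-equivalent norm, by the representation theorem). Hence $(\Delta_{g,\cW}+I)^{-1}$ is a compact, self-adjoint, strictly positive operator, so by the spectral theorem for compact self-adjoint operators its spectrum is a sequence of positive eigenvalues accumulating only at $0$, each of finite multiplicity. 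Translating back, $\Delta_{g,\cW}$ has spectrum consisting of eigenvalues of finite multiplicity with no finite accumulation point, which is the assertion.

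The only genuinely delicate point is the very first step: verifying that the two-sided pointwise comparison between $g$ and $h$ really does globalize from the chart-wise constants $C(U_x)$ to a single constant and that it transfers cleanly to an equivalence of the $L^2$ and $H^1$ norms built from the two metrics — in particular that $d\mu_g$ and $d\mu_h$ are mutually absolutely continuous with density bounded above and below, which is where the formula $\sqrt{\det g}\, d\psi_x^\ast\mathcal{L}$ for $\mu_g$ and the comparability of quadratic forms (hence of determinants) must be used. Everything after that is the standard Rellich--Kondrachov plus resolvent-compactness argument.
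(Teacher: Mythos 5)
Your proposal is correct and follows essentially the same route as the paper's proof: Rellich--Kondrachov compactness of $H^1(M)\hookrightarrow L^2(M,d\mu_g)$, followed by factoring the resolvent of $\Delta_{g,\cW}$ through $\cW\hookrightarrow L^2(M,d\mu_g)$ to conclude compactness of the resolvent and hence discreteness of the spectrum. The only (welcome) difference is that you make explicit the comparison of $\mu_g$ with a smooth background measure $\mu_h$ via Remark~\ref{rmkrough}, a step the paper leaves implicit when it cites Rellich--Kondrachov directly against $d\mu_g$.
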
 
\begin{proof} 
Since we have assumed that the boundary of $M$ is smooth, we have by 
the Rellich-Kondrachov theorem (c.f. Theorem 2.34 in \cite{Aubin}) that $H^1(M) \embed L^2(M,\ d\mu_g)$ compactly.
Therefore, we can factor the resolvent $(i  + \Delta_{g,\cW}): L^2(M,\ d\mu_g) \to L^2(M,\ d\mu_g)$ as 
$$(i + \Delta_{g,\cW}): L^2(M,\ d\mu_g) \to \dom(\Delta_{g,\cW}) \to \dom(\sqrt{\Delta_{g,\cW}}) = \cW \to H^1(M) \embed L^2(M,\ d\mu_g).$$
Hence, we obtain that this is a compact map. 

A sufficient condition for a self-adjoint operator $T$ to have discrete spectrum is for $(\zeta - T)^{-1}$ to be a compact operator, for some $\zeta$ in the resolvent set. In this situation, one also has that the operator $T$ has no finite accumulation points, and that each eigenspace is finite dimensional; this follows from Theorem 6.29 in \S6 in Chapter III in \cite{Kato}.  By Theorem 5.29 in \S 5 of Chapter 3 in \cite{Kato}, $\Delta_{g,\cW}$ is self-adjoint, and $i$ is an element of the resolvent set.  
\end{proof}

\subsection{Examples of boundary conditions}
We shall see that Dirichlet, Neumann, and mixed boundary conditions are all admissible.  
\begin{example}[Dirichlet and Neumann conditions]
As mentioned in \S\ref{S:Lap}, for any smooth compact manifold $M$ with smooth boundary and a smooth metric $g$, 
$\cW = H^1_0(M)$ corresponds to Dirichlet boundary condition, and $\cW = H^1(M)$ is the Neumann counterpart.
This is easily verified by Stokes's theorem, and by using the existence of a unit outer normal to the boundary.
See \S4 and \S7 in \cite{IS} for the calculation in the case of Euclidean domains.
\end{example} 

\begin{example}[Mixed boundary conditions]
Let $(M,g)$ be a smooth compact manifold and assume also that $g$ is smooth. 
Fix $\Sigma \subset \partial M$
a closed subset of the boundary $\partial M$ with nonempty interior. Then, define 
$$\cW = \set{u \in H^1(\Omega): \spt u\rest{\partial M} \subset \Sigma\ \text{a.e. in}\ \partial M}.$$

This is a closed subspace of $H^1(M,g)$. To see this, let $u_n \in \cW$ converge
to $u \in H^1(M)$. Then, letting $\Sigma^c = \partial M \setminus \Sigma$, 
\begin{multline*} 
\|u\rest{\partial M}\|_{L^2(\Sigma^c)} 
	\leq \|u\rest{\partial M} - (u_n)\rest{\partial M}\|_{L^2(\Sigma^c)} +  \| (u_n)\rest{\partial M}\|_{L^2(\Sigma^c)}\\
	\leq \|u\rest{\partial M} - (u_n)\rest{\partial M}\|_{L^2(\partial M)} 
	\leq \|u - u_n\|_{H^1(M)}
\end{multline*}
where the penultimate inequality follows from the fact that $(u_n)\rest{\partial M} = 0$ almost-everywhere in $\Sigma^c$, 
whereas the ultimate from the boundedness of the trace map 
$$u \mapsto u\rest{\partial M}: H^1(M) \to H^{\frac{1}{2}}(\partial M) \embed L^2(\partial M).$$ 
By letting $n \to \infty$, this shows that $u\rest{\partial M} = 0$ almost-everywhere in $\Sigma^c$  and hence $u \in \cW$.   The Laplacian $\Delta_{g,\cW}$ then has mixed-boundary conditions, with Dirichlet boundary conditions on $\partial M \setminus \Sigma$ and Neumann on $\Sigma$. 

When $M = \Omega \subset \R^n$, a bounded domain, with $g$ as the standard Euclidean metric were
considered by Axelsson (Ros\'en), Keith and McIntosh in \cite{AKM2}  to study the Kato square root problem under mixed boundary conditions.
As mentioned in \S\ref{Sec:RMHM}, the Kato square root problem was phrased from a first-order framework and obtained via showing that an associated operator $\Pi_B$ has a $H^\infty$ functional calculus. 
In the presence of boundary, the operator $\Pi_B$ does not have a canonical domain.
The domains considered are built from closed subspaces $\cV \subset H^1(\Omega)$  satisfying $H^1_0(\Omega) \subset \cV$.
The two extremes, $\cV = H^1_0(\Omega)$ and $\cV = H^1(\Omega)$ correspond to the Dirichlet and Neumann conditions for the relevant part of the operator $\Pi_B ^2$ respectively, where in the functional calculus, this operator is accessed by simply taking a relevant function $f$ and considering a new function $z \mapsto f(z^2)$.
The conditions $\cW$ which we have defined above, in this context, are precisely the ``mixed-boundary conditions'' of \cite{AKM2}. 
\end{example} 


\section{Proof of the main results} \label{proofs}
\subsection{The statement of the problem}\label{stetment_of_the_problem}
For the convenience of the reader, we recall the key notions from the introduction.  Let $M$ be a smooth compact manifold of dimension $n\geq 2$ with smooth boundary, and let $g$ be a rough metric on $M$.  Let $\beta>\frac{n}{2}$, and $\rho\in L^{\beta}(M, \ d\mu_g)$ be a real valued function such that
\begin{equation*}
\int_{M}\rho \ d\mu_g\neq 0.
\end{equation*}
We consider the Dirichlet form
\begin{equation*}
\mathcal{E}_{g,\cW}[u,v]=\left(\nabla u,\nabla v\right)_{L^2(M,\mathbb{C}^n,\ d\mu_g)}.
\end{equation*}

Let $\cW$ be an admissible boundary condition.  Associated to the Dirichlet form is a subspace of $\cW$, 
\begin{equation*}
Z(\rho)=
\begin{cases}
\cW & \text{if }\mathcal{E}_{g,\cW} \text{ generates the norm in } \cW,\\
    & \text{which is equivalent to the standard } H^1(M) \text{ norm },  \\
    \\
\left\{u\in\cW: \; \int_{M}\rho u\ d\mu_g =0\right\} & \text{otherwise}.
\end{cases}
\end{equation*}
Note, as we mentioned in the introduction, for $\rho=1$, $Z(\rho)$ is the intersection of $\cW$ and the closure of the operator $\Delta_{g, \cW}$. 

\begin{prop} \label{p:zrho}
	The subspace $Z(\rho)\subset\cW$ is closed in $H^1(M)$ norm. Moreover, $\dim Z(\rho)^{\perp}=\tau \leq 1$, where orthogonality is in the $\cW$ sense.
\end{prop}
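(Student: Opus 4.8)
The plan is to treat the two cases in the definition of $Z(\rho)$ separately. The first case is immediate: there $Z(\rho) = \cW$, which is closed in the $H^1(M)$ norm by the very definition of an admissible boundary condition, so that $Z(\rho)^{\perp} = \{0\}$ in $\cW$ and $\tau = 0 \le 1$; nothing more is needed.

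In the second case, $Z(\rho)$ is the kernel of the linear functional $\ell(u) := \int_M \rho u\, d\mu_g$ restricted to $\cW$, and I would reduce the whole statement to showing that $\ell$ is bounded on $H^1(M)$. Let $\beta' := \beta/(\beta-1)$ be the H\"older conjugate exponent of $\beta$; by H\"older's inequality $|\ell(u)| \le \|\rho\|_{L^{\beta}(M,\, d\mu_g)}\, \|u\|_{L^{\beta'}(M,\, d\mu_g)}$. Since $\beta > \tfrac n2$, one computes $\beta' < \tfrac{n}{n-2} \le \tfrac{2n}{n-2} =: 2^{\ast}$ when $n \ge 3$, while $\beta' < \infty$ is no restriction when $n = 2$; hence the Sobolev embedding $H^1(M) \embed L^{\beta'}(M,\, d\mu_g)$ holds. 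Here I would invoke Remark \ref{rmkrough}: the measure $\mu_g$ is comparable to the Riemannian volume measure $\mu_h$ of a smooth metric $h$, so $L^{p}(M,\, d\mu_g) = L^{p}(M,\, d\mu_h)$ with equivalent norms, and the classical Sobolev embedding for $(M,h)$ together with the metric-independence of $H^1(M)$ yields the embedding. Consequently $|\ell(u)| \lesssim \|u\|_{H^1(M)}$, so $\ell$ is bounded, and hence so is its restriction $\ell|_{\cW}$.

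It then follows that $Z(\rho) = \ker(\ell|_{\cW})$ is closed in $\cW$, hence in the $H^1(M)$ norm, and, being the kernel of a bounded linear functional on the Hilbert space $\cW$, it has codimension $0$ in $\cW$ (if $\ell|_{\cW} \equiv 0$) or $1$ (otherwise); the orthogonal decomposition $\cW = Z(\rho) \oplus Z(\rho)^{\perp}$ then gives $\dim Z(\rho)^{\perp} = \tau \le 1$. The only step requiring genuine care is the Sobolev embedding with respect to the rough measure $\mu_g$, which as above is reduced to the smooth case via Remark \ref{rmkrough}; the elementary point behind it is that $\beta > \tfrac n2$ forces $\beta' \le 2^{\ast}$, with the borderline dimension $n = 2$ covered since there $H^1(M)$ embeds into every $L^{q}(M)$ with $q < \infty$.
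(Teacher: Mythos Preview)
Your proposal is correct and follows essentially the same route as the paper: both arguments show that the linear functional $\ell(u)=\int_M \rho u\,d\mu_g$ is bounded on $H^1(M)$ via H\"older's inequality combined with the Sobolev embedding $H^1(M)\hookrightarrow L^{\beta'}(M)$, whence $Z(\rho)=\ker(\ell|_{\cW})$ is closed of codimension at most one. The only cosmetic difference is that the paper routes the embedding through an intermediate $W^{1,q}(M)$ with $q<2$ so as to record the \emph{compact} embedding $H^1(M)\hookrightarrow L^{2\beta/(\beta-1)}(M)$ (which is reused later for the complete continuity of the form $\rho[\cdot,\cdot]$), and carries out the codimension bound by an explicit contradiction rather than citing the standard fact about kernels of bounded functionals.
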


\begin{proof}\label{closed_subspace}

		Let us choose $\frac{2\beta n}{n\beta-n+2\beta}<q<2\leq n$, then $\frac{nq}{n-q}<\frac{2\beta}{\beta-1}$. Therefore The Sobolev Embedding Theorem, see \cite[Theorem 10.1]{Hebey}, gives the  continuous embeddings
	\begin{equation}\label{continuous_emb}
	H^1(M)=W^{1,2}(M)\subset W^{1,q}(M)\subset
	L^{\frac{2\beta}{\beta-1}}(M)\subset
	L^{\frac{\beta}{\beta-1}}(M),
	\end{equation}
	where the second embedding is compact. Therefore the H\"{o}lder inequality gives
	\begin{equation*}\label{ALTEclosed_subspace_1}
	\left|\int_{M}\rho u \ d\mu_g \right|\lesssim \|\rho\|_{L^{\beta}}\|u\|_{L^{\frac{\beta}{\beta-1}}}\lesssim \|\rho\|_{L^{\beta}}\|u\|_{H^1}
	\end{equation*}
for $u\in H^1(M)$. The implicit constants depend only on the volume of $M$ with respect to $\ d\mu_g$.
	In the case $Z(\rho) = \cW$, obviously $Z(\rho)$ is closed in $\cW$.  So, let us assume we are in the second case. Let $\{f_j\}_{j=1}^{\infty}\subset Z(\rho)$ and $f\in \cW$ such that $f_n\rightarrow f$ in the $H^1$ norm. Since $\int_{M}\rho f_j\ d\mu_g=0$, we obtain
	\begin{equation*}
	\left|\int_{M}\rho f \ d\mu_g\right|=\left|\int_{M}(\rho f-\rho f_j) \ d\mu_g \right|\lesssim \|\rho\|_{L^{\beta}}\|f-f_j\|_{H^1}.
	\end{equation*}
	By letting $j\rightarrow\infty$, we conclude that $\int_{M}\rho f \ d\mu_g=0$, and hence $f\in Z(\rho)$, so $Z(\rho)$ is a closed subspace of $\cW$ with respect to the $H^1(M)$ norm.
	
	To prove the statement regarding the dimension of $Z(\rho)$, we assume for the sake of contradiction that $\dim Z(\rho)^{\perp}=\tau >1$.  Then there exists linearly independent $v_1$, $v_2\in Z(\rho)^{\perp}$. Since $Z(\rho)^\perp$ is a subspace, any linear combination of $v_1$ and $v_2$ should also be in $Z(\rho)^\perp$.  Let $c_j=\int_{M}\rho v_j \ d\mu_g$ for $j=1,2$. Note that $c_j\neq0$, so  there exists $a\in \R$ such that $ac_1-c_2=0$, which is equivalent to
	\begin{equation*}
	a\int_{M}\rho v_1\ d\mu_g-\int_{M}\rho v_2\ d\mu_g=0 = \int_M \rho(a v_1 - v_2) \ d\mu_g.
	\end{equation*}
	Therefore $av_1-v_2 \in Z(\rho)$.  However, since $a v_1 - v_2$ is a linear combination of $v_1, v_2 \in Z(\rho)^{\perp}$, we have that $av_1 - v_2 \in Z(\rho) \cap Z(\rho)^\perp$.   This means that $av_1 - v_2 = 0$ which contradicts the linear independence of $v_1$ and $v_2$.  
\end{proof}

\begin{remark}[Notational simplifications]  We may, for the sake of simplicity use $L^k$ to denote $L^k (M, \ d\mu_g)$.  We shall do this when we are only working with respect to the measure, $\ d\mu_g$.  In case we are working with different measures, we shall use the more cumbersome notation to indicate the measure of integration.
\end{remark}

\begin{lemma}[Poincar\'e inequality]\label{Poinare_inequality}
	There exists a constant $C>0$ such that
	\begin{equation}\label{Poinare_inequality1}
	\left\|u-\frac{1}{\int_{M}\rho \ d\mu_g}\int_{M}\rho u \ d\mu_g \right\|_{L^2}\leq C\|\nabla u\|_{L^2}
	\end{equation}
	holds for any $u\in H^1(M)$.
\end{lemma}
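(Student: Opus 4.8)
The plan is to prove \eqref{Poinare_inequality1} by the classical compactness-and-contradiction scheme, taking care that every ingredient survives in the rough setting. First I would assume the inequality fails for every $C>0$, producing $u_j\in H^1(M)$ with $\|w_j\|_{L^2}>j\,\|\nabla u_j\|_{L^2}$, where $w_j:=u_j-\bigl(\int_M\rho\ d\mu_g\bigr)^{-1}\int_M\rho u_j\ d\mu_g$. Since subtracting a constant does not change the gradient, $\nabla w_j=\nabla u_j$, and a direct computation gives $\int_M\rho w_j\ d\mu_g=0$; also $w_j\neq 0$, so I may set $v_j:=w_j/\|w_j\|_{L^2}$. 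This yields a sequence with $\|v_j\|_{L^2}=1$, $\|\nabla v_j\|_{L^2}<1/j$, and $\int_M\rho v_j\ d\mu_g=0$, hence bounded in $H^1(M)$.

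Next I would invoke compactness of $H^1(M)\embed L^2(M,\ d\mu_g)$, valid since $\partial M$ is smooth (Theorem 2.34 in \cite{Aubin}, as already used for Proposition \ref{discretespec}), to extract a subsequence with $v_j\to v$ in $L^2$. Because $\nabla v_j\to 0$ in $L^2(T^\ast M,\ d\mu_g)$ and the closure of $\nabla_2=d$ is a closed operator with domain $H^1(M)$ (see \S\ref{S:Lap}), the pair $(v,0)$ lies in its graph, so $v\in H^1(M)$ with $\nabla v=dv=0$; combined with the convergences this gives $v_j\to v$ in $H^1(M)$. Since $d$ depends only on the differentiable structure of $M$, $dv=0$ means $v$ agrees a.e.\ with a locally constant function, hence---$M$ being connected---with a constant $c$, and $|c|\,\mu_g(M)^{1/2}=\|v\|_{L^2}=1$ forces $c\neq 0$. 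Finally, the functional $u\mapsto\int_M\rho u\ d\mu_g$ is continuous on $H^1(M)$: by the embedding $H^1(M)\subset L^{\beta/(\beta-1)}(M)$ from \eqref{continuous_emb} together with H\"older's inequality one has $\bigl|\int_M\rho u\ d\mu_g\bigr|\lesssim\|\rho\|_{L^\beta}\|u\|_{H^1}$, exactly as in the proof of Proposition \ref{p:zrho}. Passing to the limit, $0=\int_M\rho v_j\ d\mu_g\to\int_M\rho v\ d\mu_g=c\int_M\rho\ d\mu_g\neq 0$ since $\int_M\rho\ d\mu_g\neq 0$ by hypothesis, a contradiction that proves the lemma.

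I do not anticipate a genuine obstacle here; the two points that require attention rather than ingenuity, both supplied by \S\ref{S:Lap}, are that $d$ is closed on $L^2(M,\ d\mu_g)$ with domain precisely $H^1(M)$---this is what lets one upgrade $L^2$-convergence with vanishing gradients to an $H^1$-limit, which in turn is needed because the map $u\mapsto\int_M\rho u\ d\mu_g$ need not be $L^2$-continuous when $\rho\notin L^2$---and that $d$, hence the notion of a constant function, is metric-independent, which is what makes the implication "$\nabla v=0\Rightarrow v$ constant" meaningful in a setting with no distance. Should one prefer to avoid the contradiction argument, an equivalent route is to first prove the ordinary Poincar\'e--Wirtinger inequality $\|u-\bar u\|_{L^2}\lesssim\|\nabla u\|_{L^2}$ with $\bar u:=\mu_g(M)^{-1}\int_M u\ d\mu_g$ by the same compactness scheme, and then estimate $\bigl|\bar u-\bigl(\int_M\rho\ d\mu_g\bigr)^{-1}\int_M\rho u\ d\mu_g\bigr|\lesssim\|u-\bar u\|_{L^{\beta/(\beta-1)}}\lesssim\|u-\bar u\|_{H^1}\lesssim\|\nabla u\|_{L^2}$ via \eqref{continuous_emb}; I would present the direct version above.
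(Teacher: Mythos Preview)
Your proposal is correct and follows essentially the same compactness-and-contradiction scheme as the paper's own proof: normalise so that the left side equals $1$ while the gradients vanish, extract a (weakly or strongly) convergent subsequence, deduce the limit is a nonzero constant via $\nabla v=0$ on a connected manifold, and reach a contradiction by passing to the limit in $\int_M\rho v_j\,d\mu_g=0$. The only cosmetic differences are that the paper normalises $\int_M\rho\,d\mu_g=1$ at the outset and passes to the limit via the compact embedding $H^1\hookrightarrow L^{\beta/(\beta-1)}$ rather than through $H^1$-convergence and the $H^1$-continuity of the functional, but these are equivalent routes to the same conclusion.
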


\begin{proof}
	Without lost of generality, assume that $\int_{M}\rho \ d\mu_g=1$. Assume that \eqref{Poinare_inequality1} false for every $C>0$. Then there exists a sequence of functions $\{u_j\}_{j=1}^{\infty}$ such that the left side of \eqref{Poinare_inequality1} equals $1$ for all $j$ while the right hand side tends to zero as $j\rightarrow\infty$. Let 
	$$h_j=u_j-\int_{M}\rho u_j \ d\mu_g.$$ 
	Since $\|h_j\|_{L^2}=1$, and $\|\nabla h_j\|_{L^2}=\|\nabla u_j\|_{L^2}$, the sequence $\{h_j\}_{j=1}^{\infty}$ is bounded in $H^1(M)$. Therefore, since every bounded sequence in a Hilbert space has a weakly convergent subsequence, we may assume that there exists $h\in H^1(M)$ such that $h_j\rightarrow h$ in weakly in $H^1(M)$. Since $\|\nabla h_j\|_{L^2}\rightarrow 0$ as $j\rightarrow\infty$, we obtain that $\nabla h=0$ in the distributional sense. Since $M$ is connected, it follows that $h$ is constant function. Since $\int_{M}\rho h_j\ d\mu_g=0$ and the second embedding in \eqref{continuous_emb} is compact, we derive
\begin{equation*}
\left|\int_{M}\rho h \ d\mu_g \right|=\left|\int_{M}\rho h \ d\mu_g - \int_{M}\rho h_j \ d\mu_g\right|\leq\|\rho\|_{L^{\beta}}\|h-h_j\|_{L^{\frac{\beta}{\beta-1}}}\rightarrow 0
\end{equation*}

as $j\rightarrow\infty$. Since $\int_M \rho \ d\mu_g \neq 0$, and $h$ is constant, $h=0$. This contradicts $||h||_{L^2} = 1$.  
\end{proof}

\begin{cor}\label{equivalence_of_norms}
	The form $\mathcal{E}_{g,\cW}[\cdot,\cdot]$ generates the norm in $Z(\rho)$, which is equivalent to the standard $H^1(M)$ norm.
\end{cor}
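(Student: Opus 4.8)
The plan is to split along the two cases in the definition of $Z(\rho)$ and reduce the claim to the Poincar\'e inequality of Lemma \ref{Poinare_inequality}. In the first case $Z(\rho) = \cW$, and by the very definition of that case $\mathcal{E}_{g,\cW}$ already generates a norm on $\cW$ equivalent to the standard $H^1(M)$ norm, so there is nothing further to prove. The substance therefore lies entirely in the second case, where
\[
Z(\rho) = \left\{ u \in \cW : \int_M \rho u \ d\mu_g = 0 \right\}.
\]

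In the second case, I would first check that $u \mapsto \mathcal{E}_{g,\cW}[u,u]^{1/2} = \|\nabla u\|_{L^2}$ is a genuine norm on $Z(\rho)$, not merely a seminorm. Indeed, if $\mathcal{E}_{g,\cW}[u,u] = 0$ then $\nabla u = 0$ in the distributional sense, so $u$ is a constant function since $M$ is connected; but then $0 = \int_M \rho u \ d\mu_g = u \int_M \rho \ d\mu_g$, and $\int_M \rho \ d\mu_g \neq 0$ forces $u = 0$. Next comes the key estimate: for $u \in Z(\rho)$ the correction term in the Poincar\'e inequality \eqref{Poinare_inequality1} vanishes, so it reduces to $\|u\|_{L^2} \leq C \|\nabla u\|_{L^2}$. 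Hence
\[
\|u\|_{H^1}^2 = \|u\|_{L^2}^2 + \|\nabla u\|_{L^2}^2 \leq (C^2 + 1) \|\nabla u\|_{L^2}^2 = (C^2+1)\, \mathcal{E}_{g,\cW}[u,u],
\]
while the reverse bound $\mathcal{E}_{g,\cW}[u,u] = \|\nabla u\|_{L^2}^2 \leq \|u\|_{H^1}^2$ is immediate. This is precisely the asserted equivalence of norms on $Z(\rho)$.

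Finally I would record completeness for good measure: since the two norms are equivalent and $Z(\rho)$ is closed in $H^1(M)$ by Proposition \ref{p:zrho}, the space $Z(\rho)$ equipped with $\mathcal{E}_{g,\cW}$ is a Hilbert space, so $\mathcal{E}_{g,\cW}$ genuinely generates the norm there. I do not expect any real obstacle in this argument; the only points requiring a little care are verifying positive-definiteness of the form on $Z(\rho)$ — which upgrades the Poincar\'e bound from a statement about the seminorm $\|\nabla\, \cdot\,\|_{L^2}$ to one about an honest norm — and correctly dispatching the trivial first case by unwinding the definition of $Z(\rho)$.
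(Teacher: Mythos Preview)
Your proof is correct and follows essentially the same approach as the paper, which simply records that the result follows from Lemma~\ref{Poinare_inequality} and Proposition~\ref{p:zrho}. Your write-up is a careful unpacking of exactly that one-line justification: the case split on the definition of $Z(\rho)$, the vanishing of the correction term in \eqref{Poinare_inequality1} for $u\in Z(\rho)$, and the appeal to closedness for completeness.
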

\begin{proof}
The proof follows from Lemma \ref{Poinare_inequality} and Proposition \ref{p:zrho}.
\end{proof}

By Proposition \ref{equivalence_of_norms}, $Z(\rho)$, equipped with the norm $\mathcal{E}_{g,\cW}[\cdot,\cdot]$, is a Hilbert space, which we denote by $\left(Z(\rho), \mathcal{E}_{g,\cW}\right)$.

Let us consider the form
\begin{equation*}
\rho[u,v]=\int_M \rho u \overline{v}\ d \mu_g
\quad \text{in}
\quad \left(Z(\rho), \mathcal{E}_{g,\cW}\right).
\end{equation*}
In order to see that $\rho[\cdot,\cdot]$ is well defined, recall 
\begin{equation}\label{compact_emb}
	\left(Z(\rho), \mathcal{E}_{g,\cW}\right)\subset H^1(M)\subset L^{\frac{2\beta}{\beta-1}}(M),
\end{equation}
where the first embedding is continuous by Corollary \ref{equivalence_of_norms} and the second embedding is compact by \eqref{continuous_emb}. Hence, by \eqref{continuous_emb}, the H\"{o}lder inequality implies
	\begin{equation}\label{H3ALT}
	|\rho[u,v]|\leq \|\rho\|_{L^{\beta}}\|u\|_{L^{\frac{2\beta}{\beta-1}}}\|v\|_{L^{\frac{2\beta}{\beta-1}}},
	\end{equation}
for $u$, $v\in Z(\rho)$, so that $\rho[\cdot,\cdot]$ is well defined. Moreover, the next proposition holds.

\begin{prop}\label{rho_is_comp_con}
	The form $\rho[\cdot,\cdot]$ is a completely continuous form in the Hilbert space $\left(Z(\rho), \mathcal{E}_{g,\cW}\right)$.
\end{prop}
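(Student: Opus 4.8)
The plan is to show that $\rho[\cdot,\cdot]$ is completely continuous by exploiting the compactness of the embedding $\left(Z(\rho), \mathcal{E}_{g,\cW}\right)\subset L^{\frac{2\beta}{\beta-1}}(M)$ recorded in \eqref{compact_emb}. Recall that a symmetric form $a[\cdot,\cdot]$ on a Hilbert space is completely continuous if $u_j\rightharpoonup u$ and $v_j\rightharpoonup v$ weakly imply $a[u_j,v_j]\to a[u,v]$; by a standard polarisation and diagonal-sequence argument it suffices to check that $u_j\rightharpoonup 0$ weakly in $\left(Z(\rho),\mathcal{E}_{g,\cW}\right)$ forces $\rho[u_j,u_j]\to 0$. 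So first I would take a sequence $u_j\rightharpoonup 0$ weakly in $\left(Z(\rho),\mathcal{E}_{g,\cW}\right)$. By Corollary \ref{equivalence_of_norms} the $\mathcal{E}_{g,\cW}$-norm is equivalent to the $H^1(M)$-norm on $Z(\rho)$, so $\{u_j\}$ is bounded in $H^1(M)$ and converges weakly to $0$ there as well.

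The second step is to pass to strong convergence where it matters. Since the embedding $H^1(M)\subset L^{\frac{2\beta}{\beta-1}}(M)$ is compact (this is the second embedding in \eqref{continuous_emb}, which is compact), the bounded sequence $\{u_j\}$ has a subsequence converging strongly in $L^{\frac{2\beta}{\beta-1}}(M)$, and the strong limit must be $0$ because the weak limit is $0$. As this argument applies to every subsequence, the whole sequence satisfies $u_j\to 0$ strongly in $L^{\frac{2\beta}{\beta-1}}(M)$. The third step is the estimate: by the Hölder inequality exactly as in \eqref{H3ALT},
\begin{equation*}
|\rho[u_j,u_j]|\leq \|\rho\|_{L^{\beta}}\|u_j\|_{L^{\frac{2\beta}{\beta-1}}}^2 \to 0,
\end{equation*}
which gives the claim. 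To get the full bilinear statement one applies the same reasoning to $u_j\rightharpoonup u$, $v_j\rightharpoonup v$: write $\rho[u_j,v_j]-\rho[u,v]=\rho[u_j-u,v_j]+\rho[u,v_j-v]$, bound the first term by $\|\rho\|_{L^{\beta}}\|u_j-u\|_{L^{\frac{2\beta}{\beta-1}}}\sup_j\|v_j\|_{L^{\frac{2\beta}{\beta-1}}}$ and the second by $\|\rho\|_{L^{\beta}}\|u\|_{L^{\frac{2\beta}{\beta-1}}}\|v_j-v\|_{L^{\frac{2\beta}{\beta-1}}}$, and invoke the compact-embedding convergence together with uniform boundedness.

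I do not expect any serious obstacle here: the proposition is essentially a packaging of the compact Sobolev embedding together with the norm equivalence from Corollary \ref{equivalence_of_norms}. The only point requiring a little care is the bookkeeping in the bilinear reduction — making sure one uses the uniform bound on $\|v_j\|_{L^{\frac{2\beta}{\beta-1}}}$ (available since weakly convergent sequences are bounded and the embedding is continuous) so that the cross term is controlled — and being explicit that the compact embedding is the composite $\left(Z(\rho),\mathcal{E}_{g,\cW}\right)\hookrightarrow H^1(M)\hookrightarrow L^{\frac{2\beta}{\beta-1}}(M)$, continuous then compact, hence compact.
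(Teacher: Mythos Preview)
Your proposal is correct and, in its final bilinear step, essentially identical to the paper's proof: both split $\rho[u_j,v_j]-\rho[u,v]$ into two cross terms, apply the H\"older estimate \eqref{H3ALT}, and use the compact embedding \eqref{compact_emb} to upgrade weak convergence in $(Z(\rho),\mathcal{E}_{g,\cW})$ to strong convergence in $L^{\frac{2\beta}{\beta-1}}(M)$ together with the uniform bound on the remaining factor. Your preliminary reduction to the diagonal case via polarisation is a valid (if unnecessary) detour, since you recover the direct bilinear argument anyway.
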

\begin{proof}	
Let $u_j\rightarrow u$ and $v_j\rightarrow v$ weakly in $\left(Z(\rho), \mathcal{E}_{g,\cW}\right)$. Then, by \eqref{H3ALT}, we estimate
	\begin{align*}
       \left| \rho [u,v]-\rho [u_j,v_j] \right|& = \left| \rho [u,v-v_j]-\rho [u_j-u,v_j] \right|\\
       &\lesssim  \|\rho\|_{L^{\beta}}\|u\|_{L^{\frac{2\beta}{\beta-1}}}\|v-v_j\|_{L^{\frac{2\beta}{\beta-1}}} + \|\rho\|_{L^{\beta}}\|u_j-u\|_{L^{\frac{2\beta}{\beta-1}}}\|v_j\|_{L^{\frac{2\beta}{\beta-1}}}.
    \end{align*}
     Since \eqref{compact_emb}, $u_j\rightarrow u$ and $v_j\rightarrow v$ strongly in $L^{\frac{2\beta}{\beta-1}}(M)$. Hence the last estimate implies that $\rho [\cdot,\cdot]$ is a completely continuous form in $\left(Z(\rho), \mathcal{E}_{g,\cW}\right)$.
\end{proof}

Proposition \ref{rho_is_comp_con} allows us to consider eigenvalues of the problem
\begin{equation} \label{evprob00} 
\rho [u,v]=\lambda\mathcal{E}_{g,\cW}[u,v],
\quad
u,v\in Z(p),
\end{equation}
in the sense of Remark \ref{sense_of_eig_problem}. We henceforth denote the non-zero eigenvalues of eigenvalue problem \eqref{evprob00} by 
$$\{-\lambda_{j}^{-}(\cW);\lambda_{j}^{+}(\cW)\}_{j=1}^{\infty},$$ 
such that
\begin{equation*}
-\lambda_1^-(\cW)\leq-\lambda_2^-(\cW)\leq...<0<...\leq\lambda_2^+(\cW)\leq\lambda_1^+(\cW).
\end{equation*}
Our main task is to investigate the asymptotic behaviour of these eigenvalues. We begin by recalling results obtained by Birman and Solomjak \cite{BS} for domains in $\R^n$.

\subsection{Eigenvalue asymptotics for the weighted Laplace equation on Euclidean domains}\label{Birman_Salomyak}
Here we state the simplified version of Theorems 3.2 and 3.5 in \cite{BS}, and modify them for our propose.

Let $\Omega\subset\R^n$ be a domain with Lipschitz boundary. Assume that, for almost every $x\in \Omega$, $B(x)$ is a positive number such that $B^{-1}$, $B\in L^{\infty}(\Omega)$, and $A(x)$ is a positive $n\times n$ matrix such that $A^{-1}$, $A\in L^{\infty}(\Omega)$. Let $P\in L^{\beta}(\Omega)$, where $\beta>n/2$. 

In this subsection, we assume that $\cW$ is one of the spaces $H_0^1(\Omega)$ or $H^1(\Omega)$. Consider the following forms, for $t>0$,
\begin{equation*}
\begin{gathered}
a_t[u,v]:=(A\nabla u,\nabla v)_{L^2(\Omega, d\cL)}+t(u,v)_{L_2(\Omega, d\cL)},
\quad \dom(a_t)=\cW,\\
a_B[u,v]:=(A\nabla u,\nabla v)_{L^2(\Omega, d\cL)}+(Bu,v)_{L_2(\Omega, d\cL)},
\quad \dom(a_B)=\cW.
\end{gathered}
\end{equation*}
Let $(\cW, a_t)$ and $(\cW, a_B)$ be the spaces of functions $u\in \cW$ equipped with the norms $a_t[\cdot,\cdot]$  and $a_B[\cdot,\cdot]$ respectively. Since $t>0$, and $A(x)$ is positive for almost every $x\in \Omega$, the norms $a_t[\cdot,\cdot]$  and $a_B[\cdot,\cdot]$ are equivalent to the standard norm in $H^1$. Therefore $(\cW, a_t)$, $(\cW, a_B)$ are the Hilbert spaces, and they are equal to $\cW$ as a set.

Consider the form 
\begin{equation*}
p[u,v]=\int_{\Omega}P u \overline{v} \ d\cL
\quad \text{in}
\quad \cW.
\end{equation*}
By the same arguments we do in Proposition \ref{rho_is_comp_con}, this form, $p[\cdot,\cdot]$, is a completely continuous symmetric form in both Hilbert spaces $(\cW, a_t)$ and $(\cW, a_B)$. Therefore, in the sense of Remark \ref{sense_of_eig_problem}, the eigenvalue problems
\begin{equation}\label{t_prob_Omega}
p[u,v]=\lambda a_t[u,v], \quad in \quad \cW,
\end{equation}
\begin{equation}\label{B_prob_Omega}
p[u,v]=\lambda a_B[u,v], \quad in \quad \cW,
\end{equation}
have the discrete spectrum, eigenvalues with finite multiplicity, and accumulating at $0$. Let us denote the non-zero eigenvalues of \eqref{t_prob_Omega} and \eqref{B_prob_Omega} by $\{-\lambda_j^-(a_t), \lambda_j^+(a_t)\}_{j=1}^{\infty}$ and $\{-\lambda_j^-(a_B), \lambda_j^+(a_B)\}_{j=1}^{\infty}$, respectively, ordered such that
\begin{equation*}
\begin{gathered}
-\lambda_1^-(a_t)\leq -\lambda_2^-(a_t)\leq...<0<...\leq\lambda_2^+(a_t)\leq \lambda_1^+(a_t),\\
-\lambda_1^-(a_B)\leq -\lambda_2^-(a_B)\leq...<0<...\leq\lambda_2^+(a_B)\leq \lambda_1^+(a_B).
\end{gathered}
\end{equation*}
Let $N^{\pm}(\lambda,a_t)$ and $N^{\pm}(\lambda,a_B)$ be the distribution functions of eigenvalues of problems \eqref{t_prob_Omega} and \eqref{B_prob_Omega} respectively, 
\begin{equation*}
N^{\pm}(\lambda,a_t)=\# \left\{\lambda_j^{\pm}(a_t)>\lambda\right\},
\quad
N^{\pm}(\lambda,a_B)=\# \left\{\lambda_j^{\pm}(a_B)>\lambda\right\}.
\end{equation*}
The eigenvalues above are counted according to multiplicity.  

The following theorem, in a more general form, was proved in \cite{BS}. We state here the simple version which shall be an essential ingredient in the proof of our main result.   

\begin{thm}[Theorem 3.2 and 3.5 in \cite{BS}]\label{as_for_eig_prob_in_euq}
	We have the asymptotic formulas
	\begin{equation*}
	\lim_{\lambda\rightarrow 0}\lambda \left(N^{\pm}(\lambda,a_t)\right)^{\frac{2}{n}}
	=\left(\frac{\omega_n}{(2\pi)^n}\int_{\Omega^{\pm}}\frac{|P(x)|^{\frac{n}{2}}}{\sqrt{\mathrm{det}A(x)}}dx\right)^{\frac{2}{n}},
	\end{equation*}
	where $\Omega^{\pm}:=\{x\in \Omega: \; \pm P(x)\geq0\}$, and $\omega_n$ is the volume of the unit ball in $\R^n$.
\end{thm}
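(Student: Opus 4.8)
The plan is to read this statement not as something to prove from scratch but as a specialisation of Theorems 3.2 and 3.5 of \cite{BS}: the work is to check that our hypotheses land inside theirs, to read off their leading coefficient in the divergence-form case, and to rewrite that coefficient in the displayed form.

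First I would recall the framework of \cite{BS}. On a bounded Lipschitz domain $\Omega\subset\R^n$ they treat the generalised eigenvalue problem $\mathcal{B}u=\mu\mathcal{A}u$, where $\mathcal{A}=-\divv A\nabla+(\text{lower-order terms})$ is a self-adjoint elliptic operator with $A^{-1}\in L^{\alpha}(\Omega)$ and $A\in L^{\kappa}(\Omega)$, and $\mathcal{B}$ is the order-zero operator of multiplication by $P\in L^{\beta}(\Omega)$, subject to either the Dirichlet or the Neumann condition; under $\alpha^{-1}+\beta^{-1}\le 2n^{-1}$ (with $n>2$ in the equality case), $\alpha>n$, and $\alpha^{-1}+\kappa^{-1}<2n^{-1}$, they establish a Weyl law for the counting functions $N^\pm$ whose leading coefficient is the phase-space volume determined by the principal symbol $\xi\mapsto\langle A(x)\xi,\xi\rangle$ of $\mathcal{A}$. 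In our setting $A,A^{-1}\in L^\infty(\Omega)$, so one takes $\alpha=\kappa=\infty$; then $\alpha^{-1}=\kappa^{-1}=0$ and, since $\beta>n/2$, also $\beta^{-1}<2n^{-1}$, so every condition holds \emph{strictly} and the $n>2$ caveat never enters. The forms $a_t$ and $a_B$ are the forms of the elliptic operators $\mathcal{A}_t=-\divv A\nabla+t$ and $\mathcal{A}_B=-\divv A\nabla+B$; the terms $+t$ and $+B$ are bounded coefficients of order zero, hence lower-order parts in the sense of \cite{BS}, so their theorems apply directly and the leading asymptotics are insensitive to these terms, being governed only by $\xi\mapsto\langle A(x)\xi,\xi\rangle$. (The role of the $+t$ is to make $a_t$ coercive, which for $\cW=H^1(\Omega)$ fails for $-\divv A\nabla$ alone; the $a_B$ case is word-for-word the same and we do not repeat it.)

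Next I would unwind the constant. The leading coefficient furnished by \cite{BS} for the $\pm$ part of the spectrum is $(2\pi)^{-n}$ times the $2n$-dimensional Lebesgue measure of
$$\bigl\{(x,\xi)\in\Omega^\pm\times\R^n:\ \langle A(x)\xi,\xi\rangle<|P(x)|\bigr\}.$$
For almost every fixed $x$, the linear substitution $\eta=A(x)^{1/2}\xi$ identifies the fibre over $x$ with the Euclidean ball $\{\,\eta:|\eta|<|P(x)|^{1/2}\,\}$ and has Jacobian $(\det A(x))^{1/2}$, so the fibre has $n$-dimensional measure $\omega_n|P(x)|^{n/2}(\det A(x))^{-1/2}$; integrating over $\Omega^\pm$ gives the leading coefficient
$$C^\pm=\frac{\omega_n}{(2\pi)^n}\int_{\Omega^\pm}\frac{|P(x)|^{n/2}}{\sqrt{\det A(x)}}\,dx.$$
Finally, the nonzero eigenvalues $\lambda_j^\pm(a_t)$ of $p[u,v]=\lambda\,a_t[u,v]$ are the reciprocals of the eigenvalues $\Lambda_j^\pm$ of $\mathcal{A}_t u=\Lambda P u$, so $N^\pm(\lambda,a_t)=\#\{\Lambda_j^\pm<\lambda^{-1}\}$; the Weyl law $\#\{\Lambda_j^\pm<\mu\}\sim C^\pm\mu^{n/2}$ as $\mu\to\infty$ therefore becomes $N^\pm(\lambda,a_t)\sim C^\pm\lambda^{-n/2}$ as $\lambda\to0^+$, i.e. $\lim_{\lambda\to0}\lambda\,(N^\pm(\lambda,a_t))^{2/n}=(C^\pm)^{2/n}$, which is the assertion.

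The only genuine obstacle is bookkeeping: aligning \cite{BS}'s normalisation of the counting function and their $L^\alpha$, $L^\kappa$ conditions with our $L^\infty$ coefficients, and confirming within their formalism that a bounded zeroth-order term does not disturb the leading term. There is no new analytic difficulty, since the content of the theorem is entirely that of \cite{BS} and our role is the reduction together with the explicit evaluation of the constant.
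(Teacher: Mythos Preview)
The paper gives no proof of this statement; it is cited directly from \cite{BS}, as the sentence introducing it makes explicit. Your proposal is exactly the verification that the citation applies in our setting (taking $\alpha=\kappa=\infty$ since $A,A^{-1}\in L^\infty$, so all the Birman--Solomjak inequalities hold strictly, and evaluating the phase-space constant fibrewise), and it is correct --- though note in passing that your parenthetical remark on the $a_B$ case is not how the paper proceeds: the $a_B$ asymptotic is the separate Theorem~\ref{as_for_eig_prob_in_euq_B}, derived from the present $a_t$ result via \cite[Lemma~1.3]{BS} using that $a_B-a_t$ is a completely continuous form on $\cW$, rather than by a direct appeal to \cite{BS}.
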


We shall use the preceding result to obtain an asymptotic formula for $N^{\pm}(\lambda,a_B)$.
\begin{theorem}\label{as_for_eig_prob_in_euq_B}
	We have the asymptotic formulas
	\begin{equation*}
	\lim_{\lambda\rightarrow 0}\lambda \left(N^{\pm}(\lambda,a_B)\right)^{\frac{2}{n}}
	=\left(\frac{\omega_n}{(2\pi)^n}\int_{\Omega^{\pm}}\frac{|P(x)|^{\frac{n}{2}}}{\sqrt{\mathrm{det}A(x)}}dx\right)^{\frac{2}{n}},
	\end{equation*}
	where $\Omega^{\pm}:=\{x\in \Omega: \; \pm P(x)\geq0\}$, and $\omega_n$ is the volume of the unit ball in $\R^n$.
\end{theorem}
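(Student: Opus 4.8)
The plan is to sandwich the form $a_B$ between two forms of the type $a_t$ treated in Theorem~\ref{as_for_eig_prob_in_euq}, and to exploit that the limiting constant there is independent of the parameter $t$.

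Since $B,B^{-1}\in L^{\infty}(\Omega)$, there exist constants $0<t_-\leq t_+<\infty$ with $t_-\leq B(x)\leq t_+$ for almost every $x\in\Omega$. Because the principal part $(A\nabla u,\nabla v)_{L^2(\Omega,d\cL)}$ is common to $a_{t_-}$, $a_B$ and $a_{t_+}$, this yields, for every $u\in\cW$,
\begin{equation*}
a_{t_-}[u,u]\leq a_B[u,u]\leq a_{t_+}[u,u].
\end{equation*}
All three forms are norms on $\cW$ equivalent to the $H^1$ norm, and $p[\cdot,\cdot]$ is completely continuous in each of the Hilbert spaces $(\cW,a_{t_-})$, $(\cW,a_B)$, $(\cW,a_{t_+})$ by the argument of Proposition~\ref{rho_is_comp_con}; hence Theorem~\ref{VP} applies to the three associated eigenvalue problems.

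I would then record a Glazman-type formula for the counting functions. Applying Poincar\'e's variational principle (item~\ref{VPII} of Theorem~\ref{VP}) in $(\cW,a)$, for $a$ any of the three forms, and using $a[u,u]>0$ for $u\neq 0$, one obtains for every $\lambda>0$
\begin{equation*}
N^{+}(\lambda,a)=\sup\bigl\{\dim V:\ V\subset\cW,\ p[u,u]>\lambda\,a[u,u]\ \text{for all}\ u\in V\setminus\{0\}\bigr\},
\end{equation*}
together with the analogous identity for $N^{-}(\lambda,a)$ in which $p[u,u]$ is replaced by $-p[u,u]$; the supremum is finite and attained since the spectrum accumulates only at $0$. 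If $V$ is admissible for $a_{t_+}$ at level $\lambda>0$, i.e. $p[u,u]>\lambda\,a_{t_+}[u,u]$ on $V\setminus\{0\}$, then $p[u,u]>\lambda\,a_{t_+}[u,u]\geq\lambda\,a_B[u,u]\geq\lambda\,a_{t_-}[u,u]$ there, so $V$ is admissible for $a_B$ and for $a_{t_-}$ as well; the same reasoning applies with $-p$ in place of $p$. This gives the monotonicity
\begin{equation*}
N^{\pm}(\lambda,a_{t_+})\leq N^{\pm}(\lambda,a_B)\leq N^{\pm}(\lambda,a_{t_-}),\qquad\lambda>0.
\end{equation*}

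Finally, raising this chain to the power $2/n$ and multiplying by $\lambda>0$, Theorem~\ref{as_for_eig_prob_in_euq} shows that both $\lambda\bigl(N^{\pm}(\lambda,a_{t_-})\bigr)^{2/n}$ and $\lambda\bigl(N^{\pm}(\lambda,a_{t_+})\bigr)^{2/n}$ converge, as $\lambda\to 0$, to $\left(\frac{\omega_n}{(2\pi)^n}\int_{\Omega^{\pm}}\frac{|P(x)|^{n/2}}{\sqrt{\det A(x)}}\,dx\right)^{2/n}$, a value that does not depend on $t_-$ or $t_+$; the squeeze theorem then forces $\lambda\bigl(N^{\pm}(\lambda,a_B)\bigr)^{2/n}$ to the same limit, which is exactly the asserted asymptotics. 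The argument is essentially routine; the only delicate point is the passage from Theorem~\ref{VP} to the Glazman-type formula for $N^{\pm}$ and the observation that no sign hypothesis on $P$ is needed there, since the defining inequality $p[u,u]>\lambda\,a[u,u]$ (respectively with $-p$) already carries the relevant positivity. I expect this bookkeeping, rather than any genuine difficulty, to be the main thing to get right.
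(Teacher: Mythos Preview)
Your proof is correct and takes a genuinely different route from the paper's.

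The paper observes that the difference $\mathcal{E}[u,v]=a_B[u,v]-a_t[u,v]=((B-t)u,v)_{L^2(\Omega,d\cL)}$ is a completely continuous form on $\cW$ (via the compact embedding $\cW\hookrightarrow L^2$ and boundedness of $B-t$), and then invokes \cite[Lemma~1.3]{BS}, which asserts that a completely continuous perturbation of the quadratic form in the denominator does not alter the leading spectral asymptotics. This immediately gives $\lim_{\lambda\to 0}\lambda\,N^{\pm}(\lambda,a_B)^{2/n}=\lim_{\lambda\to 0}\lambda\,N^{\pm}(\lambda,a_t)^{2/n}$, and the conclusion follows from Theorem~\ref{as_for_eig_prob_in_euq}.

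Your argument instead exploits the pointwise bounds $t_-\leq B\leq t_+$ coming from $B,B^{-1}\in L^\infty(\Omega)$ to sandwich $a_B$ between two forms $a_{t_\pm}$ of the type already handled, derives the Glazman formula for $N^{\pm}$ from Theorem~\ref{VP}\ref{VPII}, and squeezes. This is more elementary and entirely self-contained within the tools developed in the paper: you avoid the appeal to the external perturbation lemma from \cite{BS}. The paper's approach, on the other hand, is more robust in principle---complete continuity of the difference would suffice even without the uniform two-sided bound on $B$---but under the stated hypotheses your sandwich is the quicker and cleaner path.
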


\begin{proof}
		Since $\Omega\subset\R^n$ is a bounded domain with piecewise smooth boundary, the Sobolev Embedding Theorem gives that $\mathrm{id}:\cW\hookrightarrow L^2(\Omega, d\cL)$ is compact. Therefore, the multiplication operator 
		$$
		(B-t):\cW\to L^2(\Omega, d\cL)
		$$
		can be factored as an operator $\cW \stackrel{\mathrm{id}}{\embed} L^2(\Omega, d\cL) \to L^2(\Omega, d\cL)$ which shows that it is compact. In particular, this guarantees that it is a completely continuous map.
		Letting $\mathcal{E}[u,v] = a_B[u,v] - a_t[u,v]$, and taking $u_j \to u$ and $v_j \to v$ weakly,
		\begin{multline*} 
		\mathcal{E}[u,v] - \mathcal{E}[u_j, v_j] = ((B-t)u,v)_{L_2(\Omega, d\cL)}-((B - t) u_j, v_j)_{L_2(\Omega, d\cL)} \\ =( (B-t)u, v - v_j)_{L_2(\Omega, d\cL)} - ((B-t)(u_j - u), v_j)_{L_2(\Omega, d\cL)}.
		\end{multline*}
		By an application of the Banach-Steinhaus theorem, we can deduce that $\norm{v_j} \lesssim 1$ since it is weakly convergent. 
		Therefore, 
		$$|((B-t)(u_j - u), v_j)_{L_2(\Omega, d\cL)}| \lesssim \norm{(B -t)(u_j - u)},$$
		and this tends to zero by the complete continuity of $(B -t)$.  
		The remaining term tends to zero by the fact that $v_j \to v$ weakly, and therefore, $\mathcal{E}$ is a completely continuous Dirichlet form on $\cW$. Finally, \cite[Lemma 1.3]{BS} implies that
		\begin{equation*}
		\lim_{\lambda\rightarrow 0}\lambda \left(N^{\pm}(\lambda,a_B)\right)^{\frac{2}{n}}
		=\lim_{\lambda\rightarrow 0}\lambda \left(N^{\pm}(\lambda,a_t)\right)^{\frac{2}{n}}
		\end{equation*}
		and hence Theorem \ref{as_for_eig_prob_in_euq} implies the statement.
\end{proof}

\subsection{An auxiliary problem}
We shall demonstrate a Weyl asymptotic formula for a Dirichlet form in the spirit of $a_t$.  This will then be used to obtain our main result.   
Let $\cW$ be an admissible boundary condition. Consider the following Dirichlet form, for $t>0$,
\begin{equation*}
\mathcal{E}_{g,\cW,t}[u,v]=\mathcal{E}_{g,\cW}[u,v]+t(u,v)_{L^2(M, \ d\mu_g)},
\quad in \quad \cW.
\end{equation*}
We are interested in the eigenvalues, $\nu$, of the following problem
\begin{equation}\label{eig_prob_in_M}
\rho[u,v]=\nu \mathcal{E}_{g,\cW,t}[u,v],
\quad in \quad (\cW,\mathcal{E}_{g,\cW,t}),
\end{equation}
where $\rho[\cdot,\cdot]$ is the form defined in Section \ref{stetment_of_the_problem}. Note that the norm, obtained by $\mathcal{E}_{g,\cW,t}[\cdot,\cdot]$ is equivalent to the standard norm in $H^1(M)$. Therefore we can equip $\cW$ with the norm $\mathcal{E}_{g,\cW,t}[\cdot,\cdot]$, and derive the new Hilbert space $(\cW,\mathcal{E}_{g,\cW,t})$. By the same arguments we do in Proposition \ref{rho_is_comp_con}, one can see that $\rho[\cdot,\cdot]$ is a completely continuous form in the Hilbert space $(\cW,\mathcal{E}_{g,\cW,t})$. Therefore, in the sense of Remark \ref{sense_of_eig_problem}, the eigenvalue problem \eqref{eig_prob_in_M} has discrete spectrum. We denote its non-zero eigenvalues by $\{-\lambda_{j}^{-}(\cW,t);\lambda_{j}^{+}(\cW,t)\}_{j=1}^{\infty}$, such that
\begin{equation*}
-\lambda_1^-(\cW,t)\leq-\lambda_2^-(\cW,t)\leq...<0<...\leq\lambda_2^+(\cW,t)\leq\lambda_1^+(\cW,t).
\end{equation*}

The following lemma allows us to localise the problem. 

\begin{lemma} 
	There exists a finite collection of open sets $\set{M_j}$ and functions $\{\Phi_j\}$ such that 
	\begin{enumerate}[(i)] 
		\item $(M_j, \Phi_j)$ is a coordinate patch,  
		\item $\partial M_j$ is piecewise smooth and Lipschitz,
		\item $M = \cup_{j=1}^K \overline{M_j}$ and $\mu_g(M \setminus \cup_{j=1}^K M_j) = 0$.  
	\end{enumerate} 
\end{lemma}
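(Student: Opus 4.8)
The plan is to build the collection $\{M_j\}$ by a straightforward compactness argument, starting from the charts furnished by the local comparability condition in Definition~\ref{def:roughmetric}, and then refining them so that the boundary pieces are Lipschitz and the overlaps have measure zero. First I would observe that for each $x \in M$ we may choose a chart $(U_x, \psi_x)$ as in Definition~\ref{def:roughmetric} so that $g$ is comparable to the pulled-back Euclidean metric on $U_x$; in particular, by shrinking, I may assume $\psi_x(U_x)$ is a Euclidean ball (if $x$ is interior) or a half-ball (if $x \in \partial M$, using that $\partial M$ is smooth so that boundary charts flattening the boundary exist). Since $\overline{M}$ is compact, finitely many of the (say, half-sized) open sets $U_{x_1}, \dots, U_{x_K}$ already cover $M$.

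Next I would turn this open cover into one whose closures still cover $M$ but whose members have nice boundaries and essentially disjoint interiors. Inside each chart image $\psi_{x_i}(U_{x_i}) \subset \R^n$, subdivide $\R^n$ into a fine grid of half-open dyadic cubes; pulling back the (finitely many) cubes that meet $\psi_{x_i}(V_{x_i})$ for a slightly smaller $V_{x_i}$ gives coordinate patches with piecewise smooth, Lipschitz boundary. To arrange property (iii) — that the union of closures is all of $M$ while the union of the open sets is all of $M$ up to a $\mu_g$-null set — I would proceed greedily: enumerate the candidate patches $P_1, P_2, \dots$, set $M_1 = \mathrm{int}\,P_1$, and inductively let $M_{j} = \mathrm{int}\bigl(P_{j} \setminus \bigcup_{i<j} \overline{M_i}\bigr)$, discarding any $M_j$ that is empty. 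Because each $P_j$ has Lipschitz boundary and we are only removing finitely many such closed sets, each $M_j$ is again a finite union of coordinate patches with Lipschitz boundary (and one may further subdivide so that each is genuinely a single patch). The leftover set $M \setminus \bigcup_j M_j$ is contained in the union of the finitely many boundaries $\partial P_j$, each of which is Lipschitz and hence has Lebesgue measure zero in the chart; by Remark~\ref{rmkrough}, $g$ is comparable to a smooth metric $h$, so $\mu_g$ is mutually absolutely continuous with $\mu_h$ with bounded density, and therefore $\mu_g$ of this leftover set is zero. Since the cover $\{U_{x_i}\}$ was finite and each contributes finitely many dyadic patches, the total collection $\{M_j\}$ is finite, giving (i)–(iii).

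The step I expect to require the most care is reconciling properties (ii) and (iii) simultaneously: the naive dyadic-cube construction gives patches with Lipschitz boundary, but the patches from different charts $U_{x_i}$ overlap in regions whose shapes are images of cubes under a change of coordinates that is only bi-Lipschitz, so the "difference" sets $P_j \setminus \bigcup_{i<j}\overline{M_i}$ need not have smooth boundary — only Lipschitz boundary, which is exactly what (ii) asks for, so the point is really just to check that finite Boolean combinations of sets with Lipschitz boundary again have Lipschitz boundary (after discarding lower-dimensional debris), and that the debris is $\mu_g$-null. The key facts to invoke are: compactness of $\overline{M}$; the existence of boundary-flattening smooth charts near $\partial M$ (smoothness of $\partial M$); Lipschitz boundaries have measure zero; and the absolute continuity $\mu_g \approx \mu_h$ from Remark~\ref{rmkrough} to pass from "Lebesgue-null in a chart" to "$\mu_g$-null in $M$." No deep input is needed beyond these; the lemma is a technical packaging step preparing for the localization of the eigenvalue problem to Euclidean pieces where Theorem~\ref{as_for_eig_prob_in_euq_B} applies.
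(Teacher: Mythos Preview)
Your approach is different from the paper's and has a genuine gap at precisely the step you flagged as delicate.

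The paper's proof is essentially one line: invoke the classical result (Cairns) that every smooth compact manifold with boundary admits a smooth triangulation, and take $\{M_j\}$ to be the interiors of the top-dimensional simplices. Each simplex lies in a single chart, so (i) holds; the boundary of each simplex is a finite union of smooth faces, giving (ii); the closed simplices tile $M$ and their pairwise intersections lie in the $(n-1)$-skeleton, which is $\mu_g$-null, giving (iii). No Boolean surgery is required because simplices in a triangulation meet only along common faces.

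Your cube-decomposition-plus-greedy-subtraction route, by contrast, needs the assertion that ``finite Boolean combinations of sets with Lipschitz boundary again have Lipschitz boundary,'' and this is \emph{false} in general. If a face of a cube $P_j$ from one chart is tangent, after the smooth transition map, to a face of some $\overline{M_i}$ coming from another chart, then $P_j \setminus \bigcup_{i<j}\overline{M_i}$ can acquire a cusp, and a cusp is not locally the graph of a Lipschitz function. Nothing in your construction excludes such tangencies. This can be repaired --- for instance by perturbing the dyadic grids in each chart so that all boundary intersections are transversal, using Sard/Thom-type arguments --- but that is genuine additional work you have not supplied, and carrying it out while also ensuring each resulting piece is connected and a bona fide coordinate patch is fiddly.

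The triangulation approach sidesteps the issue entirely: the combinatorics of a simplicial complex guarantee that pieces meet only along lower-dimensional faces, so no tangency pathology can occur. If you prefer not to cite the triangulation theorem, a cleaner fix than transversality-by-perturbation is to cover $M$ by finitely many $h$-geodesically convex balls (for the smooth comparison metric $h$ of Remark~\ref{rmkrough}) small enough to sit in charts; intersections of such balls are again convex and hence automatically Lipschitz, so the greedy subtraction then does preserve the Lipschitz property.
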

\begin{proof}
Every smooth manifold with boundary is smoothly triangulable (c.f. \cite{Cairns}), and so we take $\set{M_j}$ as the interior of the simplices in the triangulation.
The finiteness of the $\set{M_j}$ simply follows from compactness. 
It is easy to see that $\partial M_j$ is piecewise smooth and Lipschitz. 
The measure condition follows simply from the fact that each $\set{M_j}$ is a simplex.
\end{proof}

For each $k=1,...,K$, we define the forms
\begin{equation*}
	\begin{gathered}
	\mathcal{E}_k^D[u,v]:=(\nabla u,\nabla v )_{L^2(M_k, \ d\mu_g)}+t(u, v )_{L^2(M_k,\ d\mu_g)},
	 \quad \dom(\mathcal{E}_k^D)=H_0^1(M_k),\\
	\mathcal{E}_k^N[u,v]:=(\nabla u,\nabla v )_{L^2(M_k,\ d\mu_g)}+t(u, v )_{L^2(M_k,\ d\mu_g)},
	\quad \dom(\mathcal{E}_k^N)=H^1(M_k),\\
	\end{gathered}
\end{equation*}
and
\begin{equation*}
	\rho_k[u,v]:=\int_{M_k}\rho u \overline{v} \ d\mu_g
	\quad \dom(\rho_k)=H^1(M_k).
\end{equation*}
Note that the form $\rho_k[\cdot,\cdot]$ is a completely continuous symmetric form on $H^1(M_k)$, and its restriction on $H_0^1(M_k)$ is also a completely continuous symmetric form on $H_0^1(M_k)$. Therefore the eigenvalue problems
\begin{equation}\label{D_k_problem}
\rho_k [u,v]=\lambda\mathcal{E}_k^D[u,v]
\quad in \quad H_0^1(M_k),
\end{equation}
\begin{equation}\label{N_k_problem}
\rho_k [u,v]=\lambda\mathcal{E}_k^N[u,v]
\quad in \quad H^1(M_k),
\end{equation}
are well defined; see Remark \ref{sense_of_eig_problem}. We will investigate eigenvalues of the problems above by reducing them into Euclidean space. In order to do this let us introduce the following notions.

Given $T=T_{kj}dx^k\ \otimes dx^j$ with the matrix $(T_{kj})$ being invertible, we define $T_*=T^{kj}\frac{\partial}{\partial x_k}\otimes \frac{\partial}{\partial x_j}$ with $T_{kj}T^{kj} = \delta_k^j$, where $\delta_k^j$ is the Kronecker delta.

Let $\Omega_k:=\Phi^{k}(M_k)$ and $\Phi^k(\cdot,\cdot)$ be the pullback of the usual Euclidean inner product in $\Omega_k$. Fix a smooth metric, $h$, on $M$, as in Remark \ref{rmkrough}.  
Then, there exist $G$ and $H^k$ such that  
\begin{equation*}
\begin{gathered}
g(u,v)=h(Gu,v)\\
h(u,v)=\Phi^{k}(H^ku,v)
\end{gathered}
\end{equation*}
for $u$, $v\in L^2(T^\ast M_k, \ d\mu_g)$ (c.f. Proposition 10 in \cite{BRough}).  
Let $\theta^k:=\sqrt{\det H^k}$ and $\gamma:=\sqrt{\det G}$. We also set
\begin{equation*}
\begin{gathered}
\widetilde{G}^k:=G\circ\Phi_{k}^{-1},
\quad \widetilde{H}^k:=H^k\circ\Phi_{k}^{-1}\\
\tilde{\gamma}_k:=\gamma\circ\Phi_{k}^{-1},
\quad \tilde{\theta}_k:=\theta_k\circ\Phi_{k}^{-1},
\quad\tilde{\rho}_k:=\rho\circ \Phi_{k}^{-1}
\end{gathered}
\end{equation*}
Finally, we define 
\begin{equation*}
A_k:=\widetilde{H}_*^k\widetilde{G}_*^k\tilde{\theta}_k\tilde{\gamma}_k,
\qquad
B_k:=t\tilde{\theta}_k\tilde{\gamma}_k,
\qquad P_k:=\tilde{\rho}_k\tilde{\theta}_k\tilde{\gamma}_k.
\end{equation*}

Next we consider the following reduced\footnote{We refer to these forms as \emph{reduced} because the geometric setting has been reduced to a domain in $\R^n$.} forms  
\begin{equation*}
\begin{gathered}
\tilde{\mathcal{E}}_k^D[u,v]:=(A_k\nabla u,\nabla v )_{L^2(\Omega_k, d\cL)}+(B_k u, v )_{L^2(\Omega_k, d\cL)},
\quad \dom(\mathcal{E}_k^D)=H_0^1(\Omega_k),\\
\tilde{\mathcal{E}}_k^N[u,v]:=(A_k\nabla u,\nabla v )_{L^2(\Omega_k, d\cL))}+(B_k u, v )_{L^2(\Omega_k, d\cL)},
\quad \dom(\mathcal{E}_k^N)=H^1(\Omega_k),\\
\tilde{p}_k[u,v]:=\int_{\Omega_k}P_k u \overline{v} d \mathcal{L},
\end{gathered}
\end{equation*}
and the corresponding eigenvalue problems
\begin{equation}\label{D_k_problem_Omega}
\tilde{p}_k[u,v]=\lambda\tilde{\mathcal{E}}_k^D[u,v]
\quad in \quad H_0^1(\Omega_k),
\end{equation}
\begin{equation}\label{N_k_problem_Omega}
\tilde{p}_k[u,v]=\lambda \tilde{\mathcal{E}}_k^N[u,v]
\quad in \quad H^1(\Omega_k).
\end{equation}
According to Section \ref{Birman_Salomyak}, these problems have discrete spectrum. We denote their non-zero eigenvalues by $\left\{-\nu_{k,j}^-;\nu_{k,j}^+\right\}_{j=1}^{\infty}$ and $\left\{-\eta_{k,j}^-;\eta_{k,j}^+\right\}_{j=1}^{\infty}$, respectively. Next we prove that these are also eigenvalues of the problems \eqref{D_k_problem} and \eqref{N_k_problem} respectively.

\begin{lemma}\label{same_eigv}
	The eigenvalues of problems \eqref{D_k_problem} and \eqref{N_k_problem} coincide with eigenvalues of problems \eqref{D_k_problem_Omega} and \eqref{N_k_problem_Omega} respectively.
\end{lemma}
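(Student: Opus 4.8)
The plan is to show that the reduced forms $\tilde{\mathcal{E}}_k^D$, $\tilde{\mathcal{E}}_k^N$, $\tilde{p}_k$ on $\Omega_k$ are precisely the pushforwards under the coordinate map $\Phi^k$ of the geometric forms $\mathcal{E}_k^D$, $\mathcal{E}_k^N$, $\rho_k$ on $M_k$, and that this pushforward is implemented by a unitary (up to the relevant Hilbert space structures) isomorphism of the relevant function spaces. Once the forms are intertwined by such an isomorphism, the associated completely continuous self-adjoint operators (in the sense of Remark \ref{sense_of_eig_problem}) are unitarily equivalent, hence have the same eigenvalues with the same multiplicities.

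The key computational step is a change of variables. Set $u = \tilde u \circ \Phi^k$ for $\tilde u$ defined on $\Omega_k$. First I would record that $d\mu_g = \gamma\, d\mu_h$ and $d\mu_h = \theta^k\, \Phi^{k\ast}(d\cL)$, so that under $\Phi^k$ the measure $d\mu_g$ on $M_k$ transforms to $\tilde\theta_k \tilde\gamma_k\, d\cL$ on $\Omega_k$; this immediately gives
\begin{equation*}
\int_{M_k} \rho\, u\bar v\, d\mu_g = \int_{\Omega_k} \tilde\rho_k \tilde\theta_k \tilde\gamma_k\, \tilde u \overline{\tilde v}\, d\cL = \int_{\Omega_k} P_k\, \tilde u \overline{\tilde v}\, d\cL,
\end{equation*}
i.e. $\rho_k[u,v] = \tilde p_k[\tilde u,\tilde v]$, and similarly $t(u,v)_{L^2(M_k,d\mu_g)} = (B_k \tilde u,\tilde v)_{L^2(\Omega_k,d\cL)}$. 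For the gradient term one uses the fact (from Proposition 10 in \cite{BRough}) that $g(\nabla u,\nabla v) = h(G\nabla u,\nabla v)$ and $h = \Phi^{k\ast}(H^k\cdot,\cdot)$, together with the chain rule $\nabla u = \Phi^{k\ast}(\nabla \tilde u)$ for the exterior derivative, which depends only on the differentiable structure. Carefully pairing the metric contractions with the Jacobian factors and the volume density $\gamma\theta^k$ yields exactly the matrix $A_k = \widetilde H_*^k \widetilde G_*^k \tilde\theta_k \tilde\gamma_k$, so that $(\nabla u,\nabla v)_{L^2(T^\ast M_k,d\mu_g)} = (A_k\nabla \tilde u,\nabla \tilde v)_{L^2(\Omega_k,d\cL)}$. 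Thus $\mathcal{E}_k^D[u,v] = \tilde{\mathcal{E}}_k^D[\tilde u,\tilde v]$ and likewise for the Neumann form.

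Next I would verify that the pullback map $\tilde u \mapsto \tilde u\circ\Phi^k$ is a bijection $H_0^1(\Omega_k) \to H_0^1(M_k)$ and $H^1(\Omega_k)\to H^1(M_k)$ — this follows because $\Phi^k$ is a diffeomorphism onto $\Omega_k$, hence in particular a lipeomorphism, so it preserves the Sobolev classes, and it respects supports (hence the $H_0^1$ spaces). Combining the three form identities, this map is an isometric isomorphism from $(H^1(\Omega_k),\tilde{\mathcal{E}}_k^N)$ onto $(H^1(M_k),\mathcal{E}_k^N)$ intertwining $\tilde p_k$ with $\rho_k$, and similarly in the Dirichlet case. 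An isometric isomorphism of Hilbert spaces intertwining two completely continuous symmetric forms intertwines the operators they generate, so the spectra — and in particular the nonzero eigenvalues with multiplicities — coincide. That establishes the claim.

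I expect the main obstacle to be the bookkeeping in the gradient term: one must be scrupulous about whether $G$, $H^k$ act on $TM$ or $T^\ast M$, about the placement of the ``sharp'' (the operation $T \mapsto T_*$ defined in the text), and about which factors of $\gamma$, $\theta^k$ come from the measure versus from rewriting inner products, so that the net result is exactly $A_k$ as defined and not some transpose or inverse of it. A secondary subtlety is justifying the chain rule $\nabla(\tilde u\circ\Phi^k) = \Phi^{k\ast}(\nabla\tilde u)$ at the level of the weak/measurable gradient $\overline{\nabla_2}$ rather than for smooth functions; this is handled by density of smooth functions in $H^1$ (with respect to $d\mu_g$, using the local comparability with a smooth metric $h$) together with the continuity of both sides, and is implicit in the identification $H^1(M) = \dom(\overline{\nabla_2})$ recalled in \S\ref{S:Lap}.
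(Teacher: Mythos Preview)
Your proposal is correct and follows essentially the same approach as the paper: a change of variables under the chart $\Phi^k$ showing that the three forms $\rho_k$, the $L^2$ term, and the gradient form on $M_k$ are carried to $\tilde p_k$, $(B_k\cdot,\cdot)$, and $(A_k\nabla\cdot,\nabla\cdot)$ on $\Omega_k$, whence the eigenvalue problems coincide. If anything, you supply more justification than the paper's own proof, which simply writes out the chain of equalities for the gradient term and omits any discussion of the Sobolev-space bijection or the weak chain rule.
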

\begin{proof}
	Let both $u$ and $v$ belong to $H_0^1(\Omega_k)$ or $H^1(\Omega_k)$. Then, by the definitions of $g_*$, $h_*$, $G_*$, $\Phi^k _*$, $H^k _*$, $\gamma$, $\tilde H ^k _*$, $\tilde \theta_k$, $\tilde G_*$, $\tilde \gamma_k$, $\tilde u$, and $\tilde v$, 
	\begin{align*} 
	 (\nabla u, \nabla v)_{L^2(M_k, \ d\mu_g)} & = 
	\int_{M_k}g_*(\nabla u,\nabla v)\ d\mu_g \\ 
	&=\int_{M_k}h_*(G_*\nabla u,\nabla v)\ d\mu_g  \\
	&=\int_{M_k}h_*(G_*\gamma\nabla u,\nabla v)d\mu_h \\
	&=\int_{M_k}\Phi_*^k(H_*^k G_*\gamma\nabla u,\nabla v)d\mu_h \\
	&=\int_{\Omega_k}(\widetilde{H}_*^k\tilde{\theta}_k \widetilde{G}_*\tilde{\gamma}_k\nabla \tilde{u},\nabla  \tilde{v}) d\cL
	=\left(A_k\nabla \tilde{u},\nabla \tilde{v}\right)_{L^2(\Omega_k, d\cL)}.
	\end{align*}
	Similarly, one can check that
	\begin{equation*}
	\begin{gathered}
	t(u,v)_{L^2(M_k,\ d\mu_g)}=(t\tilde{\theta}_k\tilde{\gamma}_k \tilde{u},\tilde{v})_{L^2(\Omega_k,  d\cL)}=(B_k\tilde{u},\tilde{v})_{L^2(\Omega_k, d\cL)},\\
	\rho_k [u,v]=\int_{M_k}\rho u \overline{v} \ d\mu_g=\int_{\Omega_k}\tilde{\rho}_k\tilde{\theta}_k\tilde{\gamma}_k \tilde{u}\overline{\tilde{v}} d\cL=\tilde{p}_k[\tilde{u},\tilde{v}].
	\end{gathered}
	\end{equation*}
	Therefore, for $\lambda\in\mathbb{C}$ and $I=D$ or $I=N$, we obtain
	\begin{equation*}
		\rho_k [u,v]-\lambda\mathcal{E}_k^I[u,v]=\tilde{p}_k[\tilde{u},\tilde{v}]-\lambda\tilde{\mathcal{E}}_k^I[\tilde{u},\tilde{v}].\qedhere
	\end{equation*}
\end{proof}

By Lemma \ref{same_eigv}, problems \eqref{D_k_problem}, \eqref{N_k_problem} have the same eigenvalues as problems \eqref{D_k_problem_Omega}, \eqref{N_k_problem_Omega} respectively. From now on, by $\left\{-\nu_{k,j}^-;\nu_{k,j}^+\right\}_{j=1}^{\infty}$ and $\left\{-\eta_{k,j}^-;\eta_{k,j}^+\right\}_{j=1}^{\infty}$, we refer to the eigenvalues of the problems \eqref{D_k_problem} and \eqref{N_k_problem} respectively. The corresponding eigenfunctions we denote by $\left\{\phi_{k,j}^-;\phi_{k,j}^+\right\}_{j=1}^{\infty}$ and $\left\{\psi_{k,j}^-;\psi_{k,j}^+\right\}_{j=1}^{\infty}$ respectively.

\begin{prop}\label{main_loc_aux_prob}
	The eigenvalues of the problems \eqref{D_k_problem} and \eqref{N_k_problem} satisfy the following asymptotic formulas
	\begin{equation*}
	\lim_{j\rightarrow\infty}\nu_{k,j}^{\pm}j^{\frac{2}{n}}=\lim_{j\rightarrow\infty}\eta_{k,j}^{\pm}j^{\frac{2}{n}}=\left(\frac{\omega_n}{(2\pi)^{n}}\right)^{\frac{2}{n}}\left(\int_{M_k ^\pm }|\rho|^{\frac{n}{2}}\ d\mu_g\right)^{\frac{2}{n}},
	\end{equation*}
	where $M_k ^\pm := \{ x \in M_k : \pm \rho(x) \geq 0\}$.
\end{prop}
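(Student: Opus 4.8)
The plan is to pull the whole problem back to the coordinate patches, where it becomes a Euclidean problem of exactly the type treated by Birman and Solomjak, and then to identify the resulting Weyl constant with $\int_{M_k^\pm}|\rho|^{n/2}\,d\mu_g$. By Lemma \ref{same_eigv}, the eigenvalues $\{-\nu_{k,j}^-;\nu_{k,j}^+\}$ and $\{-\eta_{k,j}^-;\eta_{k,j}^+\}$ of \eqref{D_k_problem} and \eqref{N_k_problem} coincide, with multiplicities, with those of the reduced problems \eqref{D_k_problem_Omega} and \eqref{N_k_problem_Omega}; and the latter are precisely eigenvalue problems of the form \eqref{B_prob_Omega} from \S\ref{Birman_Salomyak}, with $\Omega=\Omega_k$, $A=A_k$, $B=B_k$, $P=P_k$, and $\cW=H_0^1(\Omega_k)$, respectively $\cW=H^1(\Omega_k)$. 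Thus it suffices to (i) verify the hypotheses of Theorem \ref{as_for_eig_prob_in_euq_B} for this data, (ii) apply it, (iii) compute the Weyl constant, and (iv) pass from the counting function to the ordered eigenvalues.

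For (i): by Remark \ref{rmkrough}, on $M_k$ the rough metric $g$ is comparable to a fixed smooth metric $h$, which is in turn comparable, in the chart $\Phi^k$, to the Euclidean metric. Hence the fibrewise symmetric positive operators $G$ and $H^k$, and with them $\widetilde{G}_*^k$ and $\widetilde{H}_*^k$, have eigenvalues bounded above and below by positive constants almost everywhere, while $\tilde\gamma_k$ and $\tilde\theta_k$ are measurable functions bounded above and below by positive constants. Consequently $A_k=\widetilde{H}_*^k\widetilde{G}_*^k\tilde\theta_k\tilde\gamma_k$ is a symmetric positive-definite matrix field with $A_k,A_k^{-1}\in L^\infty(\Omega_k)$; the function $B_k=t\tilde\theta_k\tilde\gamma_k$ is positive with $B_k,B_k^{-1}\in L^\infty(\Omega_k)$; and since $d\mu_g=\tilde\theta_k\tilde\gamma_k\,d\cL$ in the chart, with $\tilde\theta_k\tilde\gamma_k$ bounded above and below, the hypothesis $\rho\in L^\beta(M,d\mu_g)$ forces $P_k=\tilde\rho_k\tilde\theta_k\tilde\gamma_k\in L^\beta(\Omega_k,d\cL)$ with $\beta>n/2$. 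Finally $\Omega_k=\Phi^k(M_k)$ is the image of the interior of a simplex under a smooth diffeomorphism, so it has Lipschitz boundary. Thus Theorem \ref{as_for_eig_prob_in_euq_B} applies to \eqref{D_k_problem_Omega} and \eqref{N_k_problem_Omega}, which settles (ii).

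For (iii), the key observation is that the dependence on the coefficient matrix $A_k$ disappears. Since $\theta^k=\sqrt{\det H^k}$ and $\gamma=\sqrt{\det G}$, while $\widetilde{H}_*^k$ and $\widetilde{G}_*^k$ are obtained from $H^k$ and $G$ by matrix inversion followed by the coordinate change $\Phi_k^{-1}$, we have $\det\widetilde{H}_*^k=\tilde\theta_k^{-2}$ and $\det\widetilde{G}_*^k=\tilde\gamma_k^{-2}$, hence
\begin{equation*}
\det A_k=(\tilde\theta_k\tilde\gamma_k)^{n}\,\det\widetilde{H}_*^k\,\det\widetilde{G}_*^k=(\tilde\theta_k\tilde\gamma_k)^{n-2},
\end{equation*}
and therefore
\begin{equation*}
\frac{|P_k|^{n/2}}{\sqrt{\det A_k}}=\frac{|\tilde\rho_k|^{n/2}(\tilde\theta_k\tilde\gamma_k)^{n/2}}{(\tilde\theta_k\tilde\gamma_k)^{(n-2)/2}}=|\tilde\rho_k|^{n/2}\,\tilde\theta_k\tilde\gamma_k.
\end{equation*}
As $\tilde\theta_k,\tilde\gamma_k>0$, the sets $\Omega_k^\pm=\{x:\pm P_k(x)\geq 0\}$ and $\Phi^k(M_k^\pm)$ agree up to a null set, so the change of variables $d\mu_g=\tilde\theta_k\tilde\gamma_k\,d\cL$ yields
\begin{equation*}
\int_{\Omega_k^\pm}\frac{|P_k(x)|^{n/2}}{\sqrt{\det A_k(x)}}\,dx=\int_{\Omega_k^\pm}|\tilde\rho_k|^{n/2}\tilde\theta_k\tilde\gamma_k\,d\cL=\int_{M_k^\pm}|\rho|^{n/2}\,d\mu_g.
\end{equation*}

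Finally (iv): setting $L_k^\pm:=\left(\tfrac{\omega_n}{(2\pi)^n}\int_{M_k^\pm}|\rho|^{n/2}\,d\mu_g\right)^{2/n}$, steps (ii) and (iii) together with Theorem \ref{as_for_eig_prob_in_euq_B} give $\lim_{\lambda\to 0}\lambda\,\big(\#\{j:\nu_{k,j}^\pm>\lambda\}\big)^{2/n}=L_k^\pm$, and likewise with $\nu$ replaced by $\eta$. For a non-increasing positive sequence $\{\mu_j\}$ tending to $0$, the equivalence $\#\{j:\mu_j>\lambda\}\geq j\iff\mu_j>\lambda$ turns such a counting-function asymptotic into $\mu_j j^{2/n}\to L_k^\pm$ by the standard sandwich argument; applying this to $\{\nu_{k,j}^\pm\}$ and $\{\eta_{k,j}^\pm\}$ gives the assertion. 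I expect the only genuinely delicate point to be (iii): one must carry the geometric data $G$, $H^k$, $\tilde\theta_k$, $\tilde\gamma_k$ correctly through the Birman--Solomjak integrand and observe that the determinant identity $\det A_k=(\tilde\theta_k\tilde\gamma_k)^{n-2}$ removes the coefficient matrix entirely, leaving exactly the density of $|\rho|^{n/2}\,d\mu_g$; the verification of the $L^\infty$-bounds in (i) and the counting-function/eigenvalue duality in (iv) are routine.
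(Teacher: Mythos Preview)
Your proof is correct and follows essentially the same route as the paper's: invoke Lemma~\ref{same_eigv} to pass to the Euclidean problems, apply Theorem~\ref{as_for_eig_prob_in_euq_B}, and then carry out the determinant computation $\det A_k=(\tilde\theta_k\tilde\gamma_k)^{n-2}$ to identify the Weyl constant with $\int_{M_k^\pm}|\rho|^{n/2}\,d\mu_g$. If anything, your write-up is more complete than the paper's in two places: you explicitly verify the $L^\infty$ and $L^\beta$ hypotheses needed for Theorem~\ref{as_for_eig_prob_in_euq_B}, and you spell out the passage from the counting-function asymptotic to the ordered-eigenvalue asymptotic, both of which the paper leaves implicit.
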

\begin{proof}
	Since $\left\{-\nu_{k,j}^-;\nu_{k,j}^+\right\}_{j=1}^{\infty}$ and $\left\{-\eta_{k,j}^-;\eta_{k,j}^+\right\}_{j=1}^{\infty}$ are also eigenvalues of problems \eqref{D_k_problem_Omega}, \eqref{N_k_problem_Omega} respectively, Theorem \ref{as_for_eig_prob_in_euq_B} implies
	\begin{equation}\label{main_loc_aux_prob1}
     \lim_{j\rightarrow\infty}\nu_{k,j}^{\pm}j^{\frac{2}{n}}=\lim_{j\rightarrow\infty}\eta_{k,j}^{\pm}j^{\frac{2}{n}}=\left(\frac{\omega_n}{(2\pi)^{n}}\right)^{\frac{2}{n}}\left(\int_{\Omega_k^{\pm}}\frac{|P_k(x)|^{\frac{n}{2}}}{\sqrt{\det A(x)}}dx\right)^{\frac{2}{n}}.
	\end{equation}
	where $\Omega_k^{\pm}:=\{x\in \Omega: \; \pm P_k(x)\geq0\}$ and $\omega_n$ is the volume of a unit ball in $\R^n$.   Since, for $x\in \Omega_k$, $\tilde{\theta}_k(x)>0$ and $\tilde{\gamma}_k(x)>0$, we obtain $\Omega_k^{\pm}=\{x\in \Omega: \; \pm \tilde{\rho}_k(x)\geq0\}$.
	Therefore
	\begin{align*}
	\int_{\Omega_k^{\pm}}\frac{|P_k(x)|^{\frac{n}{2}}}{\sqrt{\det A(x)}}dx&=
	\int_{\Omega_k^{\pm}}\frac{|\tilde{\rho}_k(x)\tilde{\theta}_k(x)\tilde{\gamma}_k(x)|^{\frac{n}{2}}}{\sqrt{\det \tilde{G}_*^k(x)\tilde{H}_*^k(x)}\left(\tilde{\theta}_k(x)\tilde{\gamma}_k(x)\right)^{\frac{n}{2}}}dx\\
	&=\int_{\Omega_k^{\pm}}\frac{|\tilde{\rho}_k(x)|^{\frac{n}{2}}}{\sqrt{\det \tilde{G}_*^k(x)\tilde{H}_*^k(x)}}dx\\
	&=\int_{\Omega_k^{\pm}}|\tilde{\rho}_k(x)|^{\frac{n}{2}}\tilde{\theta}_k(x)\tilde{\gamma}_k(x)dx.
	\end{align*}
	Since $\tilde{\rho}_k=\rho \circ \Phi_{k}^{-1}$, $\tilde{\rho}_k(x)>0$ iff $\rho\left(\Phi_{k}^{-1}(x)\right)>0$, and hence $\Phi_{k}^{-1}(\Omega_k^{\pm})=M_k^{\pm}$. Therefore the above equation gives
	\begin{align*}
	\int_{\Omega_k^{\pm}}\frac{|P_k(x)|^{\frac{n}{2}}}{\sqrt{\det A(x)}}dx=\int_{\Omega_k^{\pm}}|\tilde{\rho}_k(x)|^{\frac{n}{2}}\tilde{\theta}_k(x)\tilde{\gamma}_k(x)dx=\int_{M_k ^\pm }|\rho|^{\frac{n}{2}}\ d\mu_g.
	\end{align*}
	Hence \eqref{main_loc_aux_prob1} implies
	\begin{equation*}
		\lim_{j\rightarrow\infty}\nu_{k,j}^{\pm}j^{\frac{2}{n}}=\lim_{j\rightarrow\infty}\eta_{k,j}^{\pm}j^{\frac{2}{n}}=\left(\frac{\omega_n}{(2\pi)^{n}}\right)^{\frac{2}{n}}\left(\int_{M_k ^\pm }|\rho|^{\frac{n}{2}}\ d\mu_g\right)^{\frac{2}{n}}.
	\qedhere
	\end{equation*}
\end{proof}

Next we introduce the following notations. We set
\begin{equation*}
\begin{gathered}
\left\{-\nu_j^-;\nu_j^+\right\}_{j=1}^{\infty}:=\left\{-\nu_{k,j}^-;\nu_{k,j}^+\right\}_{k,j=1}^{\infty}\\
\left\{-\eta_j^-;\eta_j^+\right\}_{j=1}^{\infty}:=\left\{-\eta_{k,j}^-;\eta_{k,j}^+\right\}_{k,j=1}^{\infty}
\end{gathered}
\end{equation*}
such that
\begin{equation*}
\begin{gathered}
-\nu_1^-\leq -\nu_2^-\leq...<0<...\nu_2^+\leq \nu_1^+,\qquad
-\eta_1^-\leq -\eta_2^-\leq...<0<...\eta_2^+\leq \eta_1^+.
\end{gathered}
\end{equation*}
Therefore, every $\pm\nu_k^{\pm}$ is an eigenvalue of one of the problems \eqref{D_k_problem}, corresponding to some form $\mathcal{E}_{D,k}^{\pm}\in \{\mathcal{E}_{j}^{D}\}_{j=1}^{K}$ and domain $M_{D,k}^{\pm}\in \{M_j\}_{j=1}^{K}$. Similarly, every $\pm\eta_k^{\pm}$ is an eigenvalue of one of the problems \eqref{N_k_problem} corresponding  to some form $\mathcal{E}_{N,k}^{\pm}\in \{\mathcal{E}_{j}^{N}\}_{j=1}^{K}$ and domain $M_{N,k}^{\pm}\in \{M_j\}_{j=1}^{K}$. Let $\{\phi_k^{\pm}\}_{k=1}^{\infty}$ and $\{\psi_k^{\pm}\}_{k=1}^{\infty}$ be eigenfunctions corresponding to eigenvalues $\{\pm\nu_k^{\pm}\}_{k=1}^{\infty}$ and $\{\pm\eta_k^{\pm}\}_{k=1}^{\infty}$.

\begin{figure}[!htbp]\label{table}
\begin{tabular}{ | l | c | r | r | r| }
	
	\hline  & eig.val & eig.funct & domain & D. form \\ \hline
	 $k=1,...,K$ $j\in \mathbb{N}$ & $\pm\nu_{k,j}^{\pm}$ & $\phi_{k,j}^{\pm}$ & $M_k$ & $\mathcal{E}_k^D$ \\ \hline
	$k=1,...,K$ $j\in \mathbb{N}$ & $\pm\eta_{k,j}^{\pm}$ & $\psi_{k,j}^{\pm}$ & $M_k$ & $\mathcal{E}_k^N$ \\ \hline
	$j\in \mathbb{N}$ & $\pm\nu_{j}^{\pm}$ & $\phi_{j}^{\pm}$ & $M_{D,j}^{\pm}$ & $\mathcal{E}_{D,j}^{\pm}$ \\ \hline
	$j\in \mathbb{N}$ & $\pm\eta_{j}^{\pm}$ & $\psi_{j}^{\pm}$ & $M_{N,j}^{\pm}$ & $\mathcal{E}_{N,j}^{\pm}$ \\ \hline
	$j\in \mathbb{N}$ & $\pm\lambda_{j}^{\pm}(\cW,t)$ & $f_j^{\pm}$ & $M$ & $\mathcal{E}_{g,\cW,t}$ \\ \hline
	$j\in \mathbb{N}$ & $\pm\lambda_{j}^{\pm}(\cW)$ & $ $ & $M$ & $\mathcal{E}_{g,\cW}$ \\ \hline
\end{tabular} 
\caption{The eigenvalues and functions, together with their respective geometric domains and Dirichlet forms are organised in the above table.} 
\end{figure} 

\begin{prop}\label{est_for_D}
	We have the following estimate
	\begin{equation}\label{est_for_D1}
	\lambda_k^{\pm}(\cW,t)\geq \nu_k^{\pm},
	\quad k\in \mathbb{N}.
	\end{equation}
\end{prop}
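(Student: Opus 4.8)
The plan is to establish \eqref{est_for_D1} by a Dirichlet--Neumann bracketing argument. The idea is to recognise the merged sequence $\{-\nu_j^-;\nu_j^+\}_{j=1}^\infty$ as the spectrum of the \emph{same} eigenvalue problem $\rho[u,v]=\nu\,\mathcal{E}_{g,\cW,t}[u,v]$, but posed on a suitable closed subspace of the Hilbert space $(\cW,\mathcal{E}_{g,\cW,t})$, and then to compare eigenvalues via Poincar\'e's variational principle, Theorem~\ref{VP}~\ref{VPII}.

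First I would introduce the subspace $\mathcal{H}_D:=\bigoplus_{k=1}^{K}H_0^1(M_k)$, embedded into $\cW$ by extension by zero. This is legitimate: extension by zero sends $C_c^\infty(M_k)$ into $C_c^\infty(M)$, hence extends to an embedding $H_0^1(M_k)\embed H_0^1(M)\subset\cW$, and for $u\in H_0^1(M_k)$ so extended one has $\mathcal{E}_{g,\cW,t}[u,u]=\mathcal{E}_k^D[u,u]$ since $u$ and $\nabla u$ vanish off $M_k$. Because the $M_k$ are pairwise disjoint (they are the interiors of the simplices of a triangulation) and $\mu_g(M\setminus\bigcup_k M_k)=0$, functions supported in distinct patches are $L^2$- and $\mathcal{E}_{g,\cW,t}$-orthogonal; hence the embedding is isometric onto a closed subspace and the decomposition $\mathcal{H}_D=\bigoplus_k H_0^1(M_k)$ is orthogonal in $\mathcal{E}_{g,\cW,t}$. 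For the same reason $\rho[u,v]=0$ when $u,v$ are supported in different patches. Thus, on $\mathcal{H}_D$, both forms $\mathcal{E}_{g,\cW,t}$ and $\rho[\cdot,\cdot]$ are block-diagonal with $k$-th blocks $\mathcal{E}_k^D$ and $\rho_k[\cdot,\cdot]$. The key conclusion of this step is that the non-zero eigenvalues of $\rho[u,v]=\nu\,\mathcal{E}_{g,\cW,t}[u,v]$ restricted to $\mathcal{H}_D$ are exactly the union, with multiplicity, of the non-zero eigenvalues of the problems \eqref{D_k_problem} for $k=1,\dots,K$, i.e. precisely $\{-\nu_j^-;\nu_j^+\}_{j=1}^\infty$.

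Then I would apply Poincar\'e's variational principle in the two Hilbert spaces $(\cW,\mathcal{E}_{g,\cW,t})$ and $(\mathcal{H}_D,\mathcal{E}_{g,\cW,t})$ (the form $\rho[\cdot,\cdot]$ is completely continuous in both, by the argument of Proposition~\ref{rho_is_comp_con}). For $u\in\mathcal{H}_D\subset\cW$ the Rayleigh quotients $\pm\rho[u,u]/\mathcal{E}_{g,\cW,t}[u,u]$ are literally the same for the two problems, while every $k$-dimensional subspace of $\mathcal{H}_D$ is in particular a $k$-dimensional subspace of $\cW$. Therefore the maximum over $k$-dimensional subspaces defining $\nu_k^{\pm}$ is taken over a subfamily of the one defining $\lambda_k^{\pm}(\cW,t)$, which gives $\nu_k^{\pm}\le\lambda_k^{\pm}(\cW,t)$ for all $k\in\mathbb{N}$, as claimed.

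I do not expect a serious obstacle. The only point that needs care is the bookkeeping in the second paragraph --- checking that reassembling the individual Dirichlet spectra $\{\pm\nu_{k,j}^{\pm}\}_{k,j}$ into the single monotone sequence $\{\pm\nu_j^{\pm}\}$ is consistent with reading it off from the block-diagonal operator on $\mathcal{H}_D$, and that the direction of the bracketing inequality is the expected one, given that these $\nu$'s are reciprocals of Laplace-type eigenvalues (so shrinking the form domain \emph{enlarges} $\nu_k^{\pm}$). Everything else is routine functional analysis.
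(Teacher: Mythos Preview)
Your argument is correct and is a valid proof of the proposition. The route is essentially the same Dirichlet bracketing idea as in the paper, but packaged differently: you set up the direct sum $\mathcal{H}_D=\bigoplus_{k=1}^{K}H_0^1(M_k)\hookrightarrow\cW$, observe that both $\mathcal{E}_{g,\cW,t}$ and $\rho[\cdot,\cdot]$ are block-diagonal there, identify the merged sequence $\{\pm\nu_j^{\pm}\}$ as the spectrum of the restricted problem, and conclude via Poincar\'e's principle, Theorem~\ref{VP}\ref{VPII}. The paper instead uses Rayleigh's principle, Theorem~\ref{VP}\ref{VPIII}: it extends the first $k$ Dirichlet eigenfunctions $\phi_1^+,\dots,\phi_k^+$ by zero, builds a linear combination $f$ orthogonal (in $\mathcal{E}_{g,\cW,t}$) to the first $k-1$ eigenfunctions of the $\cW$-problem, and then computes $\rho[f,f]$ by showing the cross terms $\rho[\phi_j^+,\phi_l^+]$ vanish. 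Your formulation is cleaner, since the block-diagonal picture absorbs all of that cross-term bookkeeping in one stroke; the paper's version is more hands-on but arrives at the same inequality.

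One small slip: in your closing remark you write that ``shrinking the form domain \emph{enlarges} $\nu_k^{\pm}$''. That is backwards --- with Poincar\'e's max--min, passing to the subspace $\mathcal{H}_D\subset\cW$ restricts the family of $k$-dimensional test spaces and therefore can only \emph{decrease} the maximum, giving $\nu_k^{\pm}\le\lambda_k^{\pm}(\cW,t)$, which is indeed the inequality you correctly state in the main body. The confusion is only in the parenthetical; the proof itself is fine.
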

\begin{proof}
	For $1\leq j\leq k$, we extend the eigenfunctions $\phi_j^{+}$ to $M\setminus M_{D,j}^{+}$ by zero. Note that
	this extension is in $H^1_0(M)$ since it is an eigenfunction for the Dirichlet problem. Since a system of $(k-1)$ linear equations with $k$ unknowns has a solution, we can find $\alpha_1,...,\alpha_k$ such that $f:=\sum_{j=1}^{k}\alpha_j\phi_j^{+}\in H^1(M)$ with $f \neq 0$ satisfies
	\begin{equation*}
	\mathcal{E}_{g,\cW,t}[f,f_j^+]=0.
	\end{equation*}
	Moreover, $f\in H^1_0(M)$ and so  we also have that $f \in \cW$ by our assumption on $\cW$.
	
	Recall that $(\cW,\mathcal{E}_{g,\cW,t})$ is a Hilbert space with scalar product $\mathcal{E}_{g,\cW,t}[\cdot,\cdot]$. Also recall that $\{\lambda_j^{\pm}(\cW,t)\}_{j=1}^{\infty}$ are non-zero eigenvalues of the completely continuous operator generated by the form $\rho [\cdot,\cdot]$. Let us denote this operator by $\mathcal{B}$, so that $\dom(\mathcal{B})=(\cW,\mathcal{E}_{g,\cW,t})$, and $\mathcal{E}_{g,\cW,t}[\mathcal{B}u,v]=\rho [u,v]$. Then  Theorem \ref{VP}\ref{VPIII}, with $A=\mathcal{B}$ and $\mathcal{H}=(\cW,\mathcal{E}_{g,\cW,t})$, implies
	\begin{equation*}
	\lambda_k^{+}(\cW,t)\mathcal{E}_{g,\cW,t}[f,f]\geq\mathcal{E}_{g,\cW,t}[\mathcal{B}f,f]=\rho [f,f]=\sum_{j,l=1}^{k}\rho[\alpha_j\phi_j^+,\alpha_l\phi_l^+].
	\end{equation*}
	Next let us note that $\rho[\phi_j^+,\phi_l^+]=0$ for $j\neq l$. Indeed, if their supports are disjoint then the claim is obviously true. If their supports intersect, then $M_{D,j}^{+}=M_{D,l}^{+}=M_i$ for some $i=1...K$. This means that $\phi_j^+$ and $\phi_l^{+}$ are distinct eigenfunctions of the $i$-th problem of \eqref{D_k_problem}. Therefore
	\begin{equation*}
	\nu_l^+\mathcal{E}_i^D[\rho\phi_j^+,\phi_l^+]=\rho_i[\phi_j^+,\phi_l^+]=\nu_j^+\mathcal{E}_i^D[\phi_j^+,\phi_l^+].
	\end{equation*}
	This is possible only if $\rho_i[\phi_j^+,\phi_l^+]=0$, and therefore $\rho[\phi_j^+,\phi_l^+]=0$.
	Hence the last estimate implies
	 \begin{equation}\label{est_for_D2}
	 \lambda_k^{+}(\cW,t)\mathcal{E}_{g,\cW,t}[f,f]\geq\sum_{j}^{k}\rho[\alpha_j\phi_j^+,\alpha_j\phi_j^+].
	 \end{equation}
	 On the other hand
	 \begin{align*}\label{est_for_D3}
	 \sum_{j=1}^{k}\rho[\alpha_j\phi_j^+,\alpha_j\phi_j^+]
	 &=\sum_{j=1}^{k} \nu^{+}_j\left((\nabla\alpha_j\phi_j^+,\nabla\alpha_j\phi_j^+)_{L^2(M,\ d\mu_g)}+t(\alpha_j\phi_j^+,\alpha_j\phi_j^+)_{L^2(M,\ d\mu_g)}\right)\\
	 &\geq\nu^{+}_k\sum_{j=1}^{k} \left((\nabla\alpha_j\phi_j^+,\nabla\alpha_j\phi_j^+)_{L^2(M,\ d\mu_g)}+t(\alpha_j\phi_j^+,\alpha_j\phi_j^+)_{L^2(M,\ d\mu_g)}\right)\\
	 &=\nu^{+}_k\mathcal{E}_{g,\cW,t}[f,f]
	 \end{align*}
	 since $\rho[\phi_j^+,\phi_l^+]=0$ for $l\neq j$. This, together with \eqref{est_for_D2}, implies \eqref{est_for_D1}. An analogous argument gives the result for the negative eigenvalues.
	 \end{proof}

\begin{prop}\label{est_for_N}
	We have the following estimate
	\begin{equation}\label{est_for_N1}
	\eta_k^{\pm}\geq \lambda_k^{\pm}(\cW,t),
	\end{equation}
	for sufficiently large $k\in \mathbb{N}$.
\end{prop}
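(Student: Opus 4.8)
The plan is to deduce \eqref{est_for_N1} from a Dirichlet--Neumann bracketing argument, dual to the one used for Proposition \ref{est_for_D}: I would compare the $\cW$-problem on $M$ with the \emph{direct sum} of the patch-wise Neumann problems \eqref{N_k_problem} via the restriction map, and then read off the inequality from Poincar\'e's variational principle, Theorem \ref{VP}\ref{VPII}, applied to both sides. Whereas for Proposition \ref{est_for_D} one extends Dirichlet eigenfunctions by zero into $\cW$ to get a lower bound, here one restricts $\cW$-functions to the patches to get an upper bound.

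The mechanism is the observation that, since $\mu_g\bigl(M\setminus\bigcup_k M_k\bigr)=0$ and $\nabla$ is a local operator, the restriction map $R\colon u\mapsto (u\rest{M_1},\dots,u\rest{M_K})$ embeds the Hilbert space $(\cW,\mathcal{E}_{g,\cW,t})$ isometrically into the direct sum Hilbert space $\mathcal{H}_N:=\bigoplus_{k=1}^{K}\bigl(H^1(M_k),\mathcal{E}_k^N\bigr)$ and intertwines the relevant forms: $\mathcal{E}_{g,\cW,t}[u,u]=\sum_{k}\mathcal{E}_k^N[u\rest{M_k},u\rest{M_k}]$ and $\rho[u,u]=\sum_{k}\rho_k[u\rest{M_k},u\rest{M_k}]$ for every $u\in\cW\subset H^1(M)$. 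On $\mathcal{H}_N$ the symmetric form $\bigoplus_k\rho_k$ is completely continuous, being a finite direct sum of completely continuous forms, and the operator it generates is the orthogonal direct sum of the operators associated with the problems \eqref{N_k_problem}; hence its non-zero positive, resp.\ negative, eigenvalues are exactly the reorganised $\{\eta_j^+\}$, resp.\ $\{-\eta_j^-\}$, in the ordering fixed above.

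Granting this, fix $k$ large enough that $\eta_k^{+}$ is defined, and let $V\subset\cW$ be any $k$-dimensional subspace. Since $R$ is an injective isometry intertwining the forms, $RV\subset\mathcal{H}_N$ is $k$-dimensional and $\rho[u,u]/\mathcal{E}_{g,\cW,t}[u,u]=\bigl(\bigoplus_k\rho_k\bigr)[Ru,Ru]/\|Ru\|_{\mathcal{H}_N}^2$ for all $u\in V$; therefore $\min_{u\in V\setminus\{0\}}\rho[u,u]/\mathcal{E}_{g,\cW,t}[u,u]$ equals the corresponding minimum over $RV$, which by Poincar\'e's principle for $\bigoplus_k\rho_k$ (using $RV$ as a competitor) is at most $\eta_k^{+}$. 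Taking the supremum over all such $V$ and invoking Theorem \ref{VP}\ref{VPII} for the $\cW$-problem gives $\lambda_k^{+}(\cW,t)\le\eta_k^{+}$. Running the same argument with $\rho$ replaced by $-\rho$ yields the estimate for the negative eigenvalues, which completes the proof of \eqref{est_for_N1}.

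The only genuine work lies in the second paragraph: checking that $R$ is an isometric embedding with the stated intertwining properties --- which comes down to the null-measure condition supplied by the localisation lemma together with the fact that the gradient of $u\rest{M_k}$ is the restriction of $\nabla u$, both already established --- and correctly matching the reorganised data $\{\eta_j^\pm\}$ with the spectrum of the finite direct sum operator. Beyond this there is no analytic obstacle: given the reduced problems of \S\ref{Birman_Salomyak} and their asymptotics, \eqref{est_for_N1} is precisely the Neumann half of a standard Dirichlet--Neumann bracketing, and the hypothesis ``for sufficiently large $k$'' serves only to guarantee that $\eta_k^\pm$, equivalently $\lambda_k^\pm(\cW,t)$, is defined at all, i.e.\ that $M^\pm$ carries positive $\mu_g$-measure.
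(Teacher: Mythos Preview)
Your argument is correct and in fact cleaner than the paper's, though it proceeds along a genuinely different line. The paper does not introduce the direct-sum Hilbert space $\mathcal{H}_N$ or the restriction map $R$ explicitly; instead it builds a test function $f=\sum_{j=1}^{k}\beta_j f_j^{+}$ from the first $k$ eigenfunctions of the $\cW$-problem, chosen so that the restrictions $f\rest{M_{N,j}^{+}}$ are $\mathcal{E}_{N,j}^{+}$-orthogonal to $\psi_j^{+}$ for $j=1,\dots,k-1$, and then applies Rayleigh's principle (Theorem~\ref{VP}\ref{VPIII}) \emph{patch by patch} to obtain $\eta_k^{+}\,\mathcal{E}_l^N[f\rest{M_l},f\rest{M_l}]\geq \rho[f\rest{M_l},f\rest{M_l}]$ for each $l$, with a short case analysis when no index $m(l)$ as in the paper exists. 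Summing over $l$ and expanding $\rho[f,f]$ in the $f_j^{+}$ basis then yields the inequality.

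Your abstract formulation replaces all of this bookkeeping by a single application of Poincar\'e's principle (Theorem~\ref{VP}\ref{VPII}) on $\mathcal{H}_N$: once one observes that the non-zero spectrum of the direct-sum operator is exactly the merged list $\{\pm\eta_j^{\pm}\}$, the inequality $\lambda_k^{\pm}(\cW,t)\leq\eta_k^{\pm}$ is immediate from the inclusion $R(\cW)\subset\mathcal{H}_N$. This bypasses the explicit test function, the patchwise Rayleigh step, and the case distinction, and in fact yields the estimate for every $k$ for which $\eta_k^{\pm}$ is defined, not merely for large $k$. The price is that one must verify the intertwining identities and the spectral description of the direct sum, but as you note these are routine. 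Both arguments are instances of Dirichlet--Neumann bracketing; yours is the standard global packaging, the paper's is a hands-on unrolling of the same mechanism.
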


\begin{proof}
	As in the previous proposition, we can find $f=\sum_{j=1}^{k}\beta_jf_j^+$ such that
	\begin{equation*}
	\mathcal{E}_{N,j}^{+}\left[\left.f\right|_{M_{N,j}^{+}},\psi_j ^+\right]=0,
	\quad j=1,...,k-1.
	\end{equation*}
	Let us fix $1\leq l\leq K$ to fix a chart $(\Phi_l, M_l)$. Next, we will prove the estimate
	\begin{equation}\label{est_for_N4}
	\eta_k^+\mathcal{E}_l^N[f\rest{M_l},f\rest{M_l}]\geq \rho \left[f\rest{M_l},f\rest{M_l}\right].
	\end{equation}
	Assume, for now, that there exists $m(l)\in \mathbb{N}$ such that
	\begin{equation} \label{existsml} 
	\eta_{l,m(l)+1}^{+}\leq\eta_k^{+}\leq\eta_{l,m(l)}^{+}.
	\end{equation}
	Recall that $\{\eta_{l,j}^{+}\}_{j=1}^{\infty}$ and $\{\psi_{l,j}^{+}\}_{j=1}^{\infty}$ are eigenvalues and eigenfunctions corresponding to the form $\mathcal{E}_l^{N}$ on the domain $M_l$. The last estimates imply $\{\eta_{l,j}^{+}\}_{j=1}^{m(l)}\subset \{\eta_{j}^{+}\}_{j=1}^{k}$. Therefore $\{\psi_{l,j}^{+}\}_{j=1}^{m(l)}\subset \{\psi_{j}^{+}\}_{j=1}^{k}$, and consequently, by from the construction of $f$, it follows
	\begin{equation}\label{est_for_N2}
	\mathcal{E}_l^{N}\left[\left.f\right|_{M_{l}},\psi_{l,j}^{+}\right]=0,
	\quad j=1,...,m(l).
	\end{equation}
	Since $f_j ^+\in \cW$, we see that $f\in \cW\subset H^1(M)$, and therefore $\left.f\right|_{M_l}\in H^1(M)=\dom(\mathcal{E}_l^{N})$. Moreover, by \eqref{est_for_N2}, we see that $\left.f\right|_{M_l}\perp \{\psi_{l,j}^{+}\}_{j=1}^{m(l)}$ in $\left(H^1(M_l), \mathcal{E}_l^{N}\right)$. Therefore, by Theorem \ref{VP}\ref{VPIII}, we obtain
	\begin{equation*}
	\eta_k^+\mathcal{E}_l^N[f\rest{M_l},f\rest{M_l}]\geq\eta_{l,m(l)+1}^+\mathcal{E}_l^N[f\rest{M_l},f\rest{M_l}]\geq \rho \left[f\rest{M_l},f\rest{M_l}\right].
	\end{equation*}
	Next, assume that there is no such $m(l)$ as in \eqref{existsml}. This is possible only if the eigenvalue problem \eqref{N_k_problem}, with number $l$, does not have positive eigenvalues. This means that the right hand side of \eqref{est_for_N4} is negative, so that \eqref{est_for_N4} still holds.
	
	Summing \eqref{est_for_N4} over $1\leq l\leq K$ gives
	\begin{equation}\label{est_for_N3}
	\eta_k^+\mathcal{E}_{g,\cW,t}[f,f]\geq \rho [f,f].
	\end{equation}
	Since
	\begin{equation*}
	\rho [f_j ^+,f_l ^+]=\lambda_{j}^+(\cW,t)\mathcal{E}_{g,\cW,t}[f_j ^+,f_l ^+],
	\end{equation*}
	we conclude that $\rho [f_j ^+,f_l ^+]=0$ for $j\neq l$. Therefore
	\begin{align*}
	\rho [f,f]=\sum_{j=1}^{k} \rho[\beta_jf_j ^+,\beta_jf_j ^+]
	&=\sum_{j=1}^{k}\lambda_{j}^+(\cW,t)\mathcal{E}_{g,\cW,t}[\beta_jf_j ^+,\beta_jf_j ^+]\\
	&\geq
	\lambda_{k}^+(\cW,t)\sum_{j=1}^{k}\mathcal{E}_{g,\cW,t}[\beta_jf_j ^+,\beta_jf_j ^+]\\
	&=\lambda_{k}^+(\cW,t)\mathcal{E}_{g,\cW,t}[f,f].
	\end{align*}
	Comparing this with \eqref{est_for_N3} we derive the statement. A similar argument proves the analogous result for the negative eigenvalues.
\end{proof}

Now we are ready to prove the main theorem of this subsection.

\begin{theorem}\label{main_aux_problem}
	The eigenvalues of problem \eqref{eig_prob_in_M} satisfy the following asymptotic formula
	\begin{equation*}
	\lim_{k\rightarrow\infty}\lambda_k^{\pm}(\cW,t)k^{\frac{2}{n}}=\left(\frac{\omega_n}{(2\pi)^{n}}\right)^{\frac{2}{n}}\left(\int_{M^{\pm}}|\rho|^{\frac{n}{2}}\ d\mu_g\right)^{\frac{2}{n}},
	\end{equation*}
	where $M^{\pm}:=\{x\in M: \pm\rho(x)>0\}$.
\end{theorem}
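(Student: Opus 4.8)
The plan is to deduce the formula by squeezing $\lambda_k^{\pm}(\cW,t)$ between the rearranged Dirichlet eigenvalues $\nu_k^{\pm}$ and the rearranged Neumann eigenvalues $\eta_k^{\pm}$ of the localised problems \eqref{D_k_problem} and \eqref{N_k_problem}. Propositions \ref{est_for_D} and \ref{est_for_N} already furnish the two-sided bound $\nu_k^{\pm}\le\lambda_k^{\pm}(\cW,t)\le\eta_k^{\pm}$ for all sufficiently large $k$, so it suffices to show that both $\nu_k^{\pm}k^{2/n}$ and $\eta_k^{\pm}k^{2/n}$ converge to $L^{\pm}:=\big(\tfrac{\omega_n}{(2\pi)^n}\big)^{2/n}\big(\int_{M^{\pm}}|\rho|^{\frac n2}\,d\mu_g\big)^{2/n}$, and then invoke the squeeze theorem.

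First I would rephrase Proposition \ref{main_loc_aux_prob} in terms of counting functions. For $\lambda>0$ put $N^{\pm}(\lambda,\nu_k):=\#\{\,j:\nu_{k,j}^{\pm}>\lambda\,\}$ and $N^{\pm}(\lambda,\eta_k):=\#\{\,j:\eta_{k,j}^{\pm}>\lambda\,\}$, and let $N^{\pm}(\lambda,\nu)$, $N^{\pm}(\lambda,\eta)$ denote the same quantities for the rearranged sequences $\{-\nu_j^-;\nu_j^+\}$, $\{-\eta_j^-;\eta_j^+\}$. By the elementary two-sided equivalence, valid for a positive non-increasing null sequence $(a_j)$, between $a_j j^{2/n}\to c$ and $\lambda^{n/2}\#\{j:a_j>\lambda\}\to c^{n/2}$ as $\lambda\to 0^+$ — the same equivalence already used to pass from Theorem \ref{as_for_eig_prob_in_euq_B} to the conclusion of Proposition \ref{main_loc_aux_prob} — Proposition \ref{main_loc_aux_prob} is equivalent to
\[ \lim_{\lambda\to 0^+}\lambda^{n/2}N^{\pm}(\lambda,\nu_k)=\lim_{\lambda\to 0^+}\lambda^{n/2}N^{\pm}(\lambda,\eta_k)=\frac{\omega_n}{(2\pi)^n}\int_{M_k^{\pm}}|\rho|^{\frac n2}\,d\mu_g,\qquad k=1,\dots,K. \]
Since, by its very construction, the family $\{-\nu_j^-;\nu_j^+\}$ is the disjoint union over $k$ of the families $\{-\nu_{k,j}^-;\nu_{k,j}^+\}$ (and similarly for $\eta$), the counting functions add exactly: $N^{\pm}(\lambda,\nu)=\sum_{k=1}^{K}N^{\pm}(\lambda,\nu_k)$ and $N^{\pm}(\lambda,\eta)=\sum_{k=1}^{K}N^{\pm}(\lambda,\eta_k)$. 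Being a finite sum, the limit may be taken termwise; using that the $M_k$ are pairwise disjoint with $\mu_g\big(M\setminus\bigcup_k M_k\big)=0$, and that $M_k^{\pm}$ differs from $M_k\cap M^{\pm}$ only inside the set $\{\rho=0\}$, which contributes nothing to $\int|\rho|^{\frac n2}\,d\mu_g$, I obtain
\[ \lim_{\lambda\to 0^+}\lambda^{n/2}N^{\pm}(\lambda,\nu)=\lim_{\lambda\to 0^+}\lambda^{n/2}N^{\pm}(\lambda,\eta)=\frac{\omega_n}{(2\pi)^n}\sum_{k=1}^{K}\int_{M_k^{\pm}}|\rho|^{\frac n2}\,d\mu_g=\frac{\omega_n}{(2\pi)^n}\int_{M^{\pm}}|\rho|^{\frac n2}\,d\mu_g. \]
Translating this back through the same equivalence gives $\lim_{k\to\infty}\nu_k^{\pm}k^{2/n}=\lim_{k\to\infty}\eta_k^{\pm}k^{2/n}=L^{\pm}$.

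Finally, combining this with $\nu_k^{\pm}\le\lambda_k^{\pm}(\cW,t)\le\eta_k^{\pm}$ from Propositions \ref{est_for_D} and \ref{est_for_N}, multiplying by $k^{2/n}$ and letting $k\to\infty$, the squeeze theorem yields $\lim_{k\to\infty}\lambda_k^{\pm}(\cW,t)k^{2/n}=L^{\pm}$, which is exactly the claimed asymptotic. I expect the only point needing care to be the passage between an asymptotic for a monotone null sequence and the asymptotic for its counting function; this, however, is a standard elementary (Tauberian-flavoured) estimate, the additivity of the counting functions over a finite disjoint decomposition is exact, and the remaining geometric and measure-theoretic bookkeeping is immediate from the localisation lemma. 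When $\int_{M^{\pm}}|\rho|^{\frac n2}\,d\mu_g=0$ for one of the signs, the same squeeze argument shows $\lambda_k^{\pm}(\cW,t)=o(k^{-2/n})$, in agreement with the stated formula.
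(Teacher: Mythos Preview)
Your argument is correct and follows essentially the same route as the paper's own proof: both use Proposition~\ref{main_loc_aux_prob} to obtain counting-function asymptotics on each piece, sum over the finite cover, and combine with the bracketing from Propositions~\ref{est_for_D} and~\ref{est_for_N}. The only cosmetic difference is that the paper squeezes at the level of counting functions and then converts to the eigenvalue asymptotic, whereas you first convert the summed counting-function asymptotic back to eigenvalue asymptotics for $\nu_k^{\pm}$ and $\eta_k^{\pm}$ and then squeeze; the content is the same.
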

\begin{proof}
	We note that for each $k$, by Proposition \ref{main_loc_aux_prob}, the counting functions satisfy 
	$$\lim_{\lambda \to 0} \lambda^{n/2} \# \{ \eta_{k,j} ^\pm \geq \lambda \} = \frac{\omega_n}{(2\pi)^n} \int_{M_k ^\pm} |\rho|^{\frac n 2} \ d\mu_g,$$
	$$\lim_{\lambda \to 0} \lambda^{n/2} \# \{ \nu_{k,j} ^\pm \geq \lambda \} = \frac{\omega_n}{(2\pi)^n} \int_{M_k ^\pm} |\rho|^{\frac n 2} \ d\mu_g.$$
	Consequently, 
	$$\lim_{\lambda \to 0} \sum_{k=1} ^K \lambda^{n/2}  \# \{ \eta_{k,j} ^\pm \geq \lambda \} = \frac{\omega_n}{(2\pi)^n}  \sum_{k=1} ^K \int_{M_k ^\pm} |\rho|^{\frac n 2} \ d\mu_g =  \frac{\omega_n}{(2\pi)^n} \int_{M^\pm} |\rho|^{\frac n 2} \ d\mu_g,$$
	and similarly 
	$$\lim_{\lambda \to 0} \sum_{k=1} ^K \lambda^{n/2}  \# \{ \nu_{k,j} ^\pm \geq \lambda \} =\frac{\omega_n}{(2\pi)^n} \int_{M^\pm} |\rho|^{\frac n 2} \ d\mu_g.$$
	Let $N^{\pm}(\lambda,\mathcal{E}_{g,\cW, t})$, $N^{\pm}(\lambda,\mathcal{E}_k^{D})$, and $N^{\pm}(\lambda,\mathcal{E}_k^{N})$ be the counting functions of the eigenvalues of problems \eqref{eig_prob_in_M}, \eqref{D_k_problem}, and \eqref{N_k_problem} respectively.  
	By their very definitions, 
	$$\sum_{k=1} ^K \# \{ \eta_{k,j} ^\pm \geq \lambda \} = \sum_{k=1} ^K N^{\pm}(\lambda,\mathcal{E}_k^{N}),$$
	and similarly, 
	$$\sum_{k=1} ^K \# \{ \nu_{k,j} ^\pm \geq \lambda \} = \sum_{k=1} ^K N^{\pm}(\lambda,\mathcal{E}_k^{D}).$$
	We therefore have 
	$$\lim_{\lambda \to 0} \lambda^{\frac n 2}  \sum_{k=1} ^K N^{\pm}(\lambda,\mathcal{E}_k^{N}) = \lim_{\lambda \to 0} \lambda^{\frac n 2}  \sum_{k=1} ^K N^{\pm}(\lambda,\mathcal{E}_k^{D}) = \frac{\omega_n}{(2\pi)^n} \int_{M^\pm} |\rho|^{\frac n 2} \ d\mu_g.$$

	By Propositions \eqref{est_for_D} and \eqref{est_for_N}
	\begin{equation*}
	\sum_{k=1}^{K}\lambda^{\frac n 2} N^{\pm}(\lambda,\mathcal{E}_k^{D})\leq \lambda^{ \frac n 2} N^{\pm}(\lambda,\mathcal{E}_{g,\cW, t})\leq
	\sum_{k=1}^{K} \lambda^{\frac n 2} N^{\pm}(\lambda,\mathcal{E}_k^{N}).
	\end{equation*}
	Thus, we obtain 
	$$\lim_{\lambda \to 0} \lambda^{\frac n 2}  N^{\pm}(\lambda,\mathcal{E}_{g,\cW, t}) = \frac{\omega_n}{(2\pi)^n} \int_{M^\pm} |\rho|^{\frac n 2} \ d\mu_g.$$
	The statement of the theorem is an immediate consequence.  
\end{proof}

\subsection{Eigenvalue asymptotics for the weighted Laplacian on a rough Riemannian manifold}
In this subsection we will prove our main result. We start with the following lemma which allows us to derive the asymptotics of $\lambda_{k}(\cW)$ from those of $\lambda_{k}(\cW,t)$. We note that this lemma is an adaptation of \cite[Lemma 2.1]{BS1970}.

\begin{lemma}
	We have the following estimates
	\begin{equation}\label{est_main_ev}
	\lambda_{k+\tau}^{\pm}(\cW,t)\leq \lambda_{k}^{\pm}(\cW)\leq (1-t)^{-1}\lambda_{k}^{\pm}(\cW,Ct),
	\qquad 0<t<1
	\end{equation}
	for some $C>0$ independent of $t>0$. (Recall that $\tau=\dim Z(\rho)^{\perp}$, and $\tau\leq 1$).  
\end{lemma}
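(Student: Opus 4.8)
The plan is to derive both estimates from the variational principles of Theorem~\ref{VP}. Write $\mathcal{B}$, $\mathcal{B}_t$, $\mathcal{B}_{Ct}$ for the completely continuous self-adjoint operators generated by $\rho[\cdot,\cdot]$ on the Hilbert spaces $\left(Z(\rho),\mathcal{E}_{g,\cW}\right)$, $(\cW,\mathcal{E}_{g,\cW,t})$, $(\cW,\mathcal{E}_{g,\cW,Ct})$; then $\{\lambda_k^{\pm}(\cW)\}$, $\{\lambda_k^{\pm}(\cW,t)\}$, $\{\lambda_k^{\pm}(\cW,Ct)\}$ are their nonzero eigenvalues, and the Rayleigh quotient of each operator at a nonzero $u$ equals $\rho[u,u]$ divided by the square of the ambient norm of $u$. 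It suffices to argue for the positive eigenvalues, the negative case following by applying the result to $-\rho$ (for which $\int_M(-\rho)\,d\mu_g\neq0$ as well); whenever the eigenvalue on a side of an inequality fails to exist the inequality is read as vacuous. Two facts are used: by Proposition~\ref{p:zrho}, $Z(\rho)$ is closed in $\cW$ with $\dim Z(\rho)^{\perp}=\tau\leq1$, so any subspace of $Z(\rho)$ of codimension $m$ in $Z(\rho)$ has codimension $m+\tau$ in $\cW$; and by Corollary~\ref{equivalence_of_norms} there is $C'>0$ with $\|u\|_{L^2}^2\leq C'\,\mathcal{E}_{g,\cW}[u,u]$ for all $u\in Z(\rho)$. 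Fixing $C:=(C')^{-1}$, we then have, for every $u\in Z(\rho)$ and $0<t<1$,
\begin{equation*}
\mathcal{E}_{g,\cW}[u,u]\leq\mathcal{E}_{g,\cW,Ct}[u,u]=\mathcal{E}_{g,\cW}[u,u]+Ct\|u\|_{L^2}^2\leq(1+t)\mathcal{E}_{g,\cW}[u,u]\leq(1-t)^{-1}\mathcal{E}_{g,\cW}[u,u].
\end{equation*}

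For the lower bound $\lambda_{k+\tau}^{+}(\cW,t)\leq\lambda_k^{+}(\cW)$, I would use Courant's min-max, Theorem~\ref{VP}\ref{VPI}. Applied to $\mathcal{B}$ it provides a closed subspace $L_0\subset Z(\rho)$ of codimension $k-1$ in $Z(\rho)$ with $\rho[u,u]\leq\lambda_k^{+}(\cW)\,\mathcal{E}_{g,\cW}[u,u]$ for all $u\in L_0$. By the codimension fact above, $L_0$ has codimension $k+\tau-1$ in $\cW$, so it is an admissible competitor in the min-max formula for $\lambda_{k+\tau}^{+}(\cW,t)$. Since $\mathcal{E}_{g,\cW,t}\geq\mathcal{E}_{g,\cW}$, for $u\in L_0\setminus\{0\}$ with $\rho[u,u]\geq0$ we get $\rho[u,u]/\mathcal{E}_{g,\cW,t}[u,u]\leq\rho[u,u]/\mathcal{E}_{g,\cW}[u,u]\leq\lambda_k^{+}(\cW)$, while if $\rho[u,u]<0$ the quotient is negative and the bound holds trivially. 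Passing to the supremum over $L_0\setminus\{0\}$ and then to the infimum over competitors yields the assertion.

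For the upper bound $\lambda_k^{+}(\cW)\leq(1-t)^{-1}\lambda_k^{+}(\cW,Ct)$, I would use Poincar\'e's principle, Theorem~\ref{VP}\ref{VPII}. Applied to $\mathcal{B}$ it provides a $k$-dimensional subspace $V_0\subset Z(\rho)$ with $\min_{u\in V_0\setminus\{0\}}\rho[u,u]/\mathcal{E}_{g,\cW}[u,u]=\lambda_k^{+}(\cW)>0$; in particular $\rho[u,u]>0$ for every nonzero $u\in V_0$. Viewing $V_0$ as a $k$-dimensional subspace of $(\cW,\mathcal{E}_{g,\cW,Ct})$ and using the displayed bound $\mathcal{E}_{g,\cW,Ct}[u,u]\leq(1-t)^{-1}\mathcal{E}_{g,\cW}[u,u]$ on $Z(\rho)$, Poincar\'e's principle gives
\begin{equation*}
\lambda_k^{+}(\cW,Ct)\geq\min_{u\in V_0\setminus\{0\}}\frac{\rho[u,u]}{\mathcal{E}_{g,\cW,Ct}[u,u]}\geq(1-t)\min_{u\in V_0\setminus\{0\}}\frac{\rho[u,u]}{\mathcal{E}_{g,\cW}[u,u]}=(1-t)\lambda_k^{+}(\cW),
\end{equation*}
which is the desired inequality after dividing by $1-t$. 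The negative eigenvalue estimates follow verbatim with $-\rho$ in place of $\rho$.

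The Rayleigh-quotient manipulations are routine; the genuine obstacles are the bookkeeping of codimensions when enlarging subspaces of $Z(\rho)$ to subspaces of $\cW$ --- which is precisely what produces the index shift by $\tau$ in the lower bound --- together with the observation that on the extremal subspaces $L_0$ and $V_0$ the a priori indefinite form $\rho[\cdot,\cdot]$ is sign-definite in the relevant sense (bounded above by $\lambda_k^{+}(\cW)\,\mathcal{E}_{g,\cW}$ on $L_0$, strictly positive on $V_0$), which makes the one-sided quotient estimates legitimate even though $\rho$ changes sign. One should also record, though it is immediate, that the extremal subspaces furnished by Theorem~\ref{VP} may be taken inside $Z(\rho)$ and that $L_0$, having finite codimension in $\cW$, is automatically closed there.
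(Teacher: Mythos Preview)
Your proof is correct and follows essentially the same approach as the paper's: both use Poincar\'e's max--min principle (Theorem~\ref{VP}\ref{VPII}) for the upper bound and Courant's min--max (Theorem~\ref{VP}\ref{VPI}) for the lower bound, both rely on the form comparison $\mathcal{E}_{g,\cW}\geq(1-t)\,\mathcal{E}_{g,\cW,Ct}$ on $Z(\rho)$ coming from Corollary~\ref{equivalence_of_norms}, and both obtain the index shift by $\tau$ from the codimension of $Z(\rho)$ in $\cW$. Your version is in fact a bit more explicit than the paper's about why the indefiniteness of $\rho[\cdot,\cdot]$ does not spoil the quotient comparisons; one small slip is the remark that $L_0$ is ``automatically closed'' by virtue of finite codimension --- the correct reason is that $L_0$ is closed in $Z(\rho)$ (as an orthogonal complement) and $Z(\rho)$ is closed in $\cW$.
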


\begin{proof}
	By Proposition \ref{equivalence_of_norms}, there exists $C>0$ such that $\mathcal{E}_{g,\cW}[u,u]\geq C(u,u)_{L^2(M,\ d\mu_g)}$ for all $u\in Z(\rho)$.
	Therefore  
	\begin{equation*}
	\mathcal{E}_{g,\cW}[u,u]\geq(1-t)\left(\mathcal{E}_{g,\cW}[u,u]+Ct(u,u)_{L^2(M,\ d\mu_g)}\right),
	\qquad 0<t<1.
	\end{equation*}
	Therefore, by applying Theorem \ref{VP}\ref{VPII} with $\mathcal{H}=\left(Z(\rho),\mathcal{E}_{g,\cW}\right)$ and $A$ being the operator generated by the form $\rho [\cdot,\cdot]$, we conclude 
	\begin{align*}
	\lambda_{k}^{+}(\cW)=&\max_{L\subset Z(p),\dim L=k}\min_{u\in L\setminus\{0\}}\frac{\rho [u,u]}{\mathcal{E}_{g,\cW}[u,u]}\\
	&\leq\max_{L\subset Z(p),\dim L=k}\min_{u\in L\setminus\{0\}}\frac{\rho [u,u]}{(1-t)\left(\mathcal{E}_{g,\cW}[u,u]+Ct(u,u)_{L^2(M,\ d\mu_g)}\right)}
	\\
	&\leq\max_{L\subset \cW,\dim L=k}\min_{u\in L\setminus\{0\}}\frac{\rho [u,u]}{(1-t)\left(\mathcal{E}_{g,\cW}[u,u]+Ct(u,u)_{L^2(M,\ d\mu_g)}\right)}
	\\
	&=(1-t)^{-1}\lambda^{+}_k(\cW, Ct).
	\end{align*}
	In the last equation we again applied Theorem \ref{VP}\ref{VPII}, but with $\mathcal{H}=\left(\cW,\mathcal{E}_{g,\cW,Ct}\right)$ and $A$ being the operator generated by the form $\rho [\cdot,\cdot]$.	This proves the second inequality. 
	
	In the same way, but using  Theorem \ref{VP}\ref{VPI}, we derive
	\begin{align*}
	\lambda_{k+\tau}^{+}(\cW,t)=&\min_{L\subset \cW, \dim L^{\perp}=k+\tau-1}\max_{u\in L\setminus\{0\}}\frac{\rho [u,u]}{\left(\mathcal{E}_{g,\cW}[u,u]+t(u,u)_{L^2(M,\ d\mu_g)}\right)}\\
	&\leq \min_{L\subset \cW, \dim L^{\perp}=k+\tau-1,(Z(p))^{\perp}\subset L^{\perp}}\max_{u\in L\setminus\{0\}}\frac{\rho [u,u]}{\left(\mathcal{E}_{g,\cW}[u,u]+t(u,u)_{L^2(M,\ d\mu_g)}\right)}\\
	&\leq \min_{L\subset Z(\rho), \dim L^{\perp}=k-1}\max_{u\in L\setminus\{0\}}\frac{\rho [u,u]}{\left(\mathcal{E}_{g,\cW}[u,u]+t(u,u)_{L^2(M,\ d\mu_g)}\right)}\\
	&=\lambda_{k}^{+}(\cW).
	\end{align*}
	This proves the first estimate. An analogous argument shows the same result for the negative eigenvalues.
\end{proof}

We are now poised to prove the main theorem.  The statements concerning the discreteness of the spectrum have already been proven, so it only remains to demonstrate

\begin{theorem}[\textbf{Weyl's law for a weighted Laplacian with an admissible boundary condition}]
We have the following asymptotic formula 
\begin{equation*}
\lim_{k\rightarrow\infty}\lambda_{k}^{\pm}(\cW)k^{\frac{2}{n}}=\left(\frac{\omega_n}{(2\pi)^{n}}\right)^{\frac{2}{n}}\left(\int_{M^\pm}|\rho(x)|^{\frac{n}{2}}\ d\mu_g\right)^{\frac{2}{n}}=\left(\frac{\omega_n}{(2\pi)^{n}}\right)^{\frac{2}{n}}\|\rho\|_{L^{\frac{n}{2}}(M^\pm, \ d\mu_g)}.
\end{equation*}
\end{theorem}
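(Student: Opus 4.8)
The plan is to deduce the result directly from Theorem \ref{main_aux_problem}, which already supplies the Weyl asymptotics for the shifted eigenvalues $\lambda_k^\pm(\cW,t)$, together with the two-sided comparison \eqref{est_main_ev} between $\lambda_k^\pm(\cW)$ and $\lambda_k^\pm(\cW,t)$. Write
\begin{equation*}
W^\pm := \left(\frac{\omega_n}{(2\pi)^n}\right)^{\frac{2}{n}}\left(\int_{M^\pm}|\rho|^{\frac{n}{2}}\,d\mu_g\right)^{\frac{2}{n}};
\end{equation*}
this quantity is finite because $\rho\in L^\beta(M,\,d\mu_g)$ with $\beta>n/2$ and $\mu_g(M)<\infty$, so $\rho\in L^{n/2}(M,\,d\mu_g)$ by Hölder's inequality. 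Theorem \ref{main_aux_problem} states that $\lim_{k\to\infty}\lambda_k^\pm(\cW,s)\,k^{2/n}=W^\pm$ for every fixed $s>0$.

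For the upper bound I would fix $t\in(0,1)$ and use the right-hand inequality of \eqref{est_main_ev}, namely $\lambda_k^\pm(\cW)\leq(1-t)^{-1}\lambda_k^\pm(\cW,Ct)$. Multiplying by $k^{2/n}$ and sending $k\to\infty$ gives $\limsup_{k\to\infty}\lambda_k^\pm(\cW)\,k^{2/n}\leq(1-t)^{-1}W^\pm$. Since the left-hand side does not depend on $t$, letting $t\to0^+$ — with $C$ in \eqref{est_main_ev} independent of $t$, so that $Ct\to0$ while $(1-t)^{-1}\to1$ — yields $\limsup_{k\to\infty}\lambda_k^\pm(\cW)\,k^{2/n}\leq W^\pm$.

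For the lower bound I would fix any $t>0$ and use the left-hand inequality of \eqref{est_main_ev}, $\lambda_k^\pm(\cW)\geq\lambda_{k+\tau}^\pm(\cW,t)$, so that
\begin{equation*}
\lambda_k^\pm(\cW)\,k^{2/n}\ \geq\ \lambda_{k+\tau}^\pm(\cW,t)\,(k+\tau)^{2/n}\left(\frac{k}{k+\tau}\right)^{2/n}.
\end{equation*}
Since $\tau=\dim Z(\rho)^\perp\leq1$ is a fixed integer (Proposition \ref{p:zrho}), the factor $(k/(k+\tau))^{2/n}\to1$, while $\lambda_{k+\tau}^\pm(\cW,t)\,(k+\tau)^{2/n}\to W^\pm$ by Theorem \ref{main_aux_problem}; hence $\liminf_{k\to\infty}\lambda_k^\pm(\cW)\,k^{2/n}\geq W^\pm$. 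Combining the two bounds shows that the limit exists and equals $W^\pm$, which is the asserted formula — the second equality in the statement being merely the definition of the $L^{n/2}$ norm over $M^\pm$.

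I expect no genuine obstacle at this point: the substantive work (localisation via a triangulation, reduction of each patch to a Euclidean domain, and invocation of the Birman–Solomjak asymptotics through Theorems \ref{as_for_eig_prob_in_euq} and \ref{as_for_eig_prob_in_euq_B}) has already been carried out, and what remains is the elementary limiting bookkeeping above. The only points needing a moment's care are that the codimension $\tau$ is finite and independent of $k$, so it does not disturb the normalisation $k^{2/n}$, and that the constant $C$ in \eqref{est_main_ev} does not depend on $t$, which is exactly what permits the factor $(1-t)^{-1}$ to be driven to $1$ while the shift parameter tends to $0$.
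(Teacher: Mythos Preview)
Your proposal is correct and follows essentially the same route as the paper's own proof: multiply \eqref{est_main_ev} by $k^{2/n}$, invoke Theorem~\ref{main_aux_problem} on both sides, use that $\tau\leq 1$ is fixed so $(k/(k+\tau))^{2/n}\to 1$, and then let $t\to 0$. If anything, your version is slightly more careful in distinguishing $\limsup$ and $\liminf$ before concluding that the limit exists, whereas the paper writes $\lim$ directly in the sandwiching step.
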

\begin{proof}
	Let us multiply \eqref{est_main_ev} by $k^{\frac{2}{n}}$ and take the limit as $k\rightarrow\infty$, 
	\begin{equation*}
	\lim_{k\rightarrow\infty}\lambda_{k+\tau}^{\pm}(\cW,t)k^{\frac{2}{n}}\leq \lim_{k\rightarrow\infty}\lambda_{k}^{\pm}(\cW)k^{\frac{2}{n}}\leq (1-t)^{-1}\lim_{k\rightarrow\infty}\lambda_{k}^{\pm}(\cW,Ct)k^{\frac{2}{n}}.
	\end{equation*}
	We have already demonstrated that 
	\begin{equation*}
	\lim_{k\rightarrow\infty}\lambda_{k}^{\pm}(\cW,Ct)k^{\frac{2}{n}}=\left(\frac{\omega_n}{(2\pi)^{n}}\right)^{\frac{2}{n}}\left(\int_{M^\pm}|\rho(x)|^{\frac{n}{2}}\ d\mu_g\right)^{\frac{2}{n}}.
	\end{equation*}
	Similarly, 
	\begin{equation*}
	\lim_{k\rightarrow\infty}\lambda_{k+\tau}^{\pm}(\cW,t)(k+\tau)^{\frac{2}{n}}=\left(\frac{\omega_n}{(2\pi)^{n}}\right)^{\frac{2}{n}}\left(\int_{M^\pm}|\rho(x)|^{\frac{n}{2}}\ d\mu_g\right)^{\frac{2}{n}}.
	\end{equation*}
	Since $\tau \in \{0, 1\}$, we clearly have 
	$$\lim_{k \to \infty} \frac{k^{\frac 2 n}}{(k+\tau)^{\frac 2 n}} = 1.$$
	Therefore, 
	\begin{equation*}
	\lim_{k\rightarrow\infty}\lambda_{k+\tau}^{\pm}(\cW,t)(k)^{\frac{2}{n}}=\left(\frac{\omega_n}{(2\pi)^{n}}\right)^{\frac{2}{n}}\left(\int_{M^\pm}|\rho(x)|^{\frac{n}{2}}\ d\mu_g\right)^{\frac{2}{n}}.
	\end{equation*}
	
	Thus, we  derive
	\begin{multline*}
	\left(\frac{\omega_n}{(2\pi)^{n}}\right)^{\frac{2}{n}}\left(\int_{M^\pm}|\rho(x)|^{\frac{n}{2}}\ d\mu_g\right)^{\frac{2}{n}} \\ 
		\leq \lim_{k\rightarrow\infty}\lambda_{k}^{\pm}(\cW)k^{\frac{2}{k}}\leq (1-t)^{-1}\left(\frac{\omega_n}{(2\pi)^{n}}\right)^{\frac{2}{n}}\left(\int_{M^\pm}|\rho(x)|^{\frac{n}{2}}\ d\mu_g\right)^{\frac{2}{n}}.
	\end{multline*}
	Finally, by taking $t\rightarrow0$, we obtain the statement.  
\end{proof}


\section{Concluding Remarks} \label{Sec:CR}
In this paper,  we considered Laplacians induced by rough metrics $g$ and weighted eigenvalue equations involving a weight function, $\rho$, which need not have constant sign.  The manifold in question was also permitted to have boundary, and we were able to consider a large class of admissible boundary conditions including mixed boundary conditions.  There are a number of directions that further research for such problems could take.

An immediate and interesting question is to determine estimates for the remainder term in Weyl's law.  Since this contains curvature information in the smooth case, it would be interesting to understand what this reveals about the structure of rough metrics, and perhaps this would allow us to extract a weak notion of curvature or curvature bounds for such objects.

Beyond this question, we can consider this problem in more general settings.  One direction would be to consider $(V,h) \to M$, a Hermitian vector bundle with metric $h$ over $M$ equipped with a measure $\mu$.
Fixing a connection $\nabla$ and a closed subspace $\cW \subset H^1(V,h)$ with $H_0^1(V,g) \subset \cW$,  we could consider the eigenvalue problem for the divergence form equation $b \nabla^\ast_\cW  B \nabla_\cW$, where  $\dom(\nabla_{\cW}) = \cW$, $b$ is a measurable function bounded below, and $B$ is an elliptic, bounded, measurable endomorphism over $V$.  The complication of this analysis is the fact that we can no longer localise the problem by pulling it into $\R^n$, and we would have to devise a method by which we only use the trivialisations available to us from the bundle structure.
If the measure $\mu$ were to not be induced by a rough metric, then we would also need to understand which classes of measures would be appropriate.

A bundle in which we have the commutativity of the pullbacks with a differential operator are the differential forms $\Omega M$, where we would be forced to consider the exterior derivative $d$ instead of a connection $\nabla$. 
Fixing a rough metric $g$, we would obtain adjoints $d_g^\ast$, and the Hodge-Dirac operator, $D_H = d + d_g^\ast$, would be an operator of interest.  This analysis would be plausible on a manifold without boundary to obtain spectral asymptotics for the Hodge-Laplacian  $\Delta_{H} = D^2$, using similar methods to those that we used here. 
However, the presence of boundary would complicate matters, since it is known that even in the setting when the metric is smooth, the operator
$$ \Delta_{H} = D_{\max}^\ast D_{\max}$$
where $D_{\max} = (D_c)^\ast$ with domain $\dom(D_c) = C_c^\infty(\Omega M)$, admits an infinite dimensional kernel (c.f. Proposition 3.5 in \cite{Albin}).\footnote{This result was communicated privately to us by Matthias Ludewig, Batu Güneysu and Francesco Bei.}
The analysis would therefore require an understanding of the boundary conditions we impose on the boundary.  
We intend to investigate these problems in forthcoming work   and regard the present paper as a solid foundation upon which to initiate a more general study.  

\bibliographystyle{alpha}
\bibliography{roughweyl}

\setlength{\parskip}{0pt}

\end{document}